\theoremstyle{plain}
\newtheorem{theorem}{Theorem}[section]
\newtheorem{lemma}[theorem]{Lemma}
\newtheorem{remark}{Remark}
\theoremstyle{definition}
\DeclareMathOperator*{\Span}{span}
\DeclareMathOperator*{\argmin}{argmin}
\renewcommand{\leq}{\leqslant}
\renewcommand{\geq}{\geqslant}
\newcommand{\IR}{\mathbb{R}}
\newcommand{\IN}{\mathbb{N}}
\definecolor{mygreen}{rgb}{0,0.6,0}
\definecolor{mygray}{rgb}{0.97,0.97,0.97}
\definecolor{mymauve}{rgb}{0.58,0,0.82}
\numberwithin{equation}{section}
\DeclareMathOperator{\misfit}{\mathcal{J}}
\DeclareMathOperator{\supp}{supp}
\DeclareMathOperator{\dist}{dist}
\newcommand{\tolAngle}{{\varepsilon_\theta}}
\newcommand{\tolTrunc}{{\varepsilon_\Psi}}
\title{Adaptive Spectral Inversion for Inverse Medium Problems}
\author{
%	{\let\thefootnote\relax\footnote{Department of Mathematics and Computer Science, University of Basel, Basel, Switzerland}}
	Yannik G.\ Gleichmann\,\footnote{Department of Mathematics and Computer Science, University of Basel, Basel, Switzerland;
	\href{mailto:yannik.gleichmann@unibas.ch}{yannik.gleichmann@unibas.ch}}
	\qquad
	Marcus J.\ Grote\,\footnote{Department of Mathematics and Computer Science, University of Basel, Basel, Switzerland;
	Corresponding author: \href{mailto:marcus.grote@unibas.ch}{marcus.grote@unibas.ch}}
	\\
	\tiny{}
}
\date{\today}
\begin{document}
\maketitle
\begin{abstract}
A nonlinear optimization method is proposed for the solution of inverse medium problems
with spatially varying properties.
To avoid the prohibitively large number of unknown control variables resulting from
standard grid-based representations, the misfit is instead minimized in a small subspace
spanned by the first few eigenfunctions of a judicious elliptic operator, which itself depends
on the previous iteration. By repeatedly adapting 
both the dimension and the basis of the search space, regularization is inherently incorporated
at each iteration without the need for extra Tikhonov penalization.
Convergence is proved under an angle condition, which is included
into the resulting \emph{Adaptive Spectral Inversion} (ASI) algorithm. 
The ASI approach compares favorably to standard grid-based inversion using $L^2$-Tikhonov regularization when applied to an elliptic inverse problem. The improved accuracy
resulting from the newly included angle condition is further demonstrated via numerical experiments from
time-dependent inverse scattering problems.

\bigskip
\noindent
\textbf{Keywords:} Inverse problem, inverse scattering problem, adaptive eigenspace inversion, nonlinear optimization

\end{abstract}
\section{Introduction}
The solution of inverse medium problems entails the reconstruction of a medium's spatially varying properties, $u(x)$, inside a bounded region of space $\Omega\subset \IR^d$ from partially available, often noisy observations of a state variable $y$.
This is inverse to the direct or forward problem of determining $y$ for a given medium $u(x)$, where typically $y$ satisfies a governing partial differential equation, be it stationary or time-dependent, which involves $u(x)$. More abstractly, 
if we denote the solution to the forward problem by a {\em forward operator} $F: H_1 \to H_2$ acting between two Hilbert spaces $H_1,H_2$, the associated inverse problem then consists in solving
\begin{align}\label{eq:invpb}
	F(u) = y,
\end{align}
for a given right-hand side $y^\dagger\in H_2$, that is, to determine a solution $u^\dagger \in H_1$ such that $y^\dagger = F(u^\dagger)$.

Due to measurement errors, only approximate (noisy) data $y^\delta$ is available in practice,
\begin{align}\label{eq:noise}
	\|y^\dagger - y^\delta\|_{H_2} \leq \delta,
\end{align}
for a noise level $\delta \geq 0$.
Then, the inverse medium problem consists in finding $u^{\dagger,\delta} \in H_1$ that satisfies
\begin{align}\label{eq:IP.operator}
	F(u) = y^\delta,
\end{align}
which we reformulate as a least-squares minimization problem for the {\em misfit} $\misfit^\delta: H_1 \to \IR$ as
\begin{align}\label{eq:IP.misfit}
	u^{\dagger,\delta} = \argmin_{u \in H_1} \misfit^\delta(u) := \argmin_{u \in H_1} \frac{1}{2} \|F(u) - y^\delta\|^2_{H_2}.
\end{align}

In general, \eqref{eq:IP.operator} or \eqref{eq:IP.misfit} is ill-posed and cannot be solved directly without further regularization.
Numerical methods for obtaining stable solutions to \eqref{eq:IP.operator} or \eqref{eq:IP.misfit} essentially fall into three categories. 
First, starting from an initial guess $u^{(0)}$, \emph{iterative regularization methods}, such as the Landweber iteration \cite{hanke1995aConvergence, landweber1951anIterationFormula}, a special case of gradient descent, improve upon $u^{(m)} $ at iteration $m$ by using the derivative of $\misfit^\delta$ \cite{bakushinsky2004iterative, kaltenbacher2008iterative}.
Due to semi-convergence, the iteration needs to be stopped judiciously as the regularization parameter corresponds to the reciprocal of $m$.
Second, \emph{Tikhonov regularization} adds a regularization term $R(u)$ to $\misfit^\delta(u)$ \cite{engl1996regularization, kirsch1996an}. 
Hence, instead of solving \eqref{eq:IP.misfit}, one now seeks a minimizer of $\misfit^\delta(u) + \alpha R(u)$, where $\alpha$ is the regularization parameter which must be chosen carefully.
Typical choices for the regularization term $R(u)$ include the $L^2$-norm, the $\operatorname{TV}$-norm, or the $H^1$-semi-norm, depending on the expected smoothness in $u^\dagger$.
Third, \emph{regularization by projection} \emph{(or discretization)} in preimage-space replaces in \eqref{eq:IP.misfit} or \eqref{eq:IP.operator} the infinite dimensional space $H_1$ by a finite dimensional subspace $\Psi^{(m)}\subset H_1$ and then solves for $u^{(m)} \in \Psi^{(m)}$ \cite{groetsch1988convergence,kaltenbacher2012aConvergence}.
Here, $\Psi^{(m)}$ typically corresponds to the underlying subspace of a finite difference or finite element discretization whose mesh-size, $h_m>0$, then acts as the regularization parameter.
Alternative projection methods use finite-dimensional subspaces in image-space \cite{hofmann2007regularization,  kaltenbacher2000regularization}, or even projection into both image- and preimage-space simultaneously \cite{hamarik2002on, natterer1977regularisierung}.

Unless an effective parametrization of $u(x)$ is known a priori, inverse medium problems usually lead to prohibitively high-dimensional search spaces where the number of unknown parameters (or control variables) is determined by the degrees of freedom of the underlying finite difference or finite element discretization. Alternatively, sparsity promoting strategies attempt to
remain sufficiently general while keeping the dimension of the search space small by
applying $\ell_1$-norm soft-thresholding, say, to promote a sparse representation in a fixed wavelet, curvelet, etc.\ basis or frame \cite{DDM2004,HH2008,LAH2012,LDNDR2010}.

An adaptive inversion method was first introduced in \cite[Section 5.4]{deBuhan2010logaritmic}, where the search space consists of the first few eigenfunctions of a judicious elliptic differential operator $L[u^{(m)}]$. 
As $L[u^{(m)}]$ itself depends on the previous iterate, $u^{(m)}$, so do its eigenfunctions;
hence, the current search space is repeatedly adapted at every iteration.
To avoid division by zero in the presence of vanishing gradients, De Buhan and Kray later incorporated a small fixed parameter $\varepsilon$ which led to the coercive elliptic operator $L_\varepsilon[u^{(m)}]$.
Restricting the search space to the first few eigenfunctions of $L_\varepsilon[u^{(m)}]$ has proved highly effective in a number of acoustic \cite{deBuhan2013aNewApproach, graff2019howToSolve}, electro-magnetic \cite{deBuhan2017numerical} and seismic \cite{faucher2020eigenvector}  inverse scattering problems, but also in optimal control \cite{clay2021anAdaptive}.

To take advantage of the regularizing effect of a low-dimensional search space, both the eigenfunctions
and the dimension of the search space were adapted in \cite{grote2017adaptive}.
Moreover, the connection
between $L_\varepsilon$ and TV-regularization motivated the use of different elliptic operators depending on the expected smoothness in the target medium \cite{grote2019adaptive}.
Recently, a first step was taken in developing a mathematical theory underpinning the remarkable accuracy of that decomposition for the approximation
of piecewise constant functions \cite{baffet2021adaptive}, which led to rigorous $L^2$-error estimates in \cite{baffet2022error}.

The remaining part of this paper is structured as follows.
In Section \ref{sec:giaa}, we introduce the general  \emph{Adaptive Inversion} iteration for the solution of inverse problems,
which proceeds by solving \eqref{eq:IP.misfit} in a sequence of finite dimensional subspaces $\Psi^{(m)} \subset H_1$, not necessarily known a priori. Here we identify a key {\it angle condition}, which yields convergence of the Adaptive Inversion iteration, and also prove that it is a genuine regularization method.
In Section \ref{sec:asd}, we present the \emph{Adaptive Spectral (AS) decomposition} and recall its
approximation properties \cite{baffet2022error,baffet2021adaptive}. By combining the
Adaptive Inversion iteration together with the AS decomposition, we propose in Section \ref{sec:asi}
the \emph{Adaptive Spectral Inversion (ASI)} Algorithm, which incorporates the new angle condition from Section \ref{sec:giaa} by including the sensitivities of the gradient into the construction of the search space at the subsequent iteration. Finally in Section \ref{sec:numerical.results}, we present several numerical experiments which illustrate the performance of the ASI method and verify the theory from Section \ref{sec:giaa}.
In particular, we apply the ASI method to a time-dependent inverse scattering problems which demonstrates that it yields a significant improvement over previous versions from \cite{grote2017adaptive, baffet2021adaptive}.
\section{Adaptive Inversion}\label{sec:giaa}
To determine a solution to the inverse problem \eqref{eq:IP.operator} for given data $y^\delta$,
we shall minimize the misfit $\misfit^\delta$ in \eqref{eq:IP.misfit} successively in a sequence of (closed) finite-dimensional subspaces $\Psi^{(m)}\subset H_1$, $m \geq 1$, 
until its gradient $D\misfit^\delta$ vanishes.
In doing so, we do not assume the entire sequence of subspaces $\{ \Psi^{(m)} \}_{m \geq 1}$ known a priori.
Hence, we consider the following adaptive inversion algorithm:

\begin{algorithm}[H]
\caption{Adaptive Inversion}
\label{algo:adaptive.inversion.algorithm}
\algsetup{indent=2em,linenosize=\tiny,linenodelimiter=.}
\begin{algorithmic}[1]
	\REQUIRE initial guess $u^{(0),\delta}$, search space $\Psi^{(1)} \subset H_1$, $m = 1$.
	\WHILE{$\|D\misfit^\delta(u^{(m),\delta})\| \neq 0$}
		\STATE \emph{Solve} 
			\begin{align}\label{eq:algo.optimization}
				u^{(m),\delta} = \argmin_{u \in \Psi^{(m)}} \misfit^\delta(u).
			\end{align}
		\STATE \emph{Determine} new search space $\Psi^{(m+1)}$, such that $u^{(m),\delta} \in \Psi^{(m+1)}$.
		\STATE $m \gets m+1$
	\ENDWHILE
\end{algorithmic}
\end{algorithm}
By ensuring that $u^{(m),\delta}$ also belongs to the new search space $\Psi^{(m+1)}$, we guarantee in every iteration that the misfit does not increase,
\begin{align}
	\misfit^\delta(u^{(m+1),\delta}) \leq \misfit^\delta(u^{(m),\delta}), \qquad \forall m \geq 1.
\end{align}
Therefore the sequence $\{u^{(m),\delta}\}_{m \geq 1}$ obtained by the Adaptive Inversion Algorithm \ref{algo:adaptive.inversion.algorithm} is a minimizing sequence of $\misfit^\delta$.
Without further assumptions, however, we do not know yet whether this sequence converges.
\subsection{Convergence}
To prove that the sequence $\{ u^{(m),\delta} \}_{m \geq 1}$ generated by the Adaptive Inversion Algorithm indeed converges, we proceed as follows.
First, we identify a key angle condition, which ensures that for fixed $\delta \geq 0$, $\|D\misfit^\delta(u^{(m),\delta})\| \to 0$ as $m \to \infty$.
Then, under suitable assumptions from convex optimization theory, we conclude that the sequence indeed converges to $u^{\dagger,\delta}$.
\begin{theorem}\label{thm:convergence.gradient}
Let the misfit $\misfit^\delta$ be Fréchet differentiable with derivative $D\misfit^\delta(u)$, $u \in H_1$ and Lipschitz-continuous for every direction $d \in H_1$, i.e.\
\begin{align}\label{eq:lipschitz.gateaux}
	| D\misfit^\delta(u + v)d - D\misfit^\delta(u)d | \leq L \|v\|_{H_1} \|d\|_{H_1} \qquad \forall u,v \in H_1, \quad L > 0,
\end{align}
and further assume that $\misfit^\delta(u) \geq C > - \infty$ for all $u \in H_1$.
Then, if the corrections $d^{(m),\delta} = u^{(m+1),\delta} - u^{(m),\delta}$ satisfy the \emph{angle condition}
\begin{align}\label{eq:angle.condition}
	|D\misfit^\delta(u^{(m),\delta})d^{(m),\delta} | \geq
	\tolAngle \|d^{(m),\delta}\|_{H_1} \|D\misfit^\delta(u^{(m),\delta})\|, \qquad 0 < \tolAngle < 1
\end{align}
uniformly in $m$, we have
\begin{align}
	\|D\misfit^\delta(u^{(m),\delta})\| \to 0, \qquad m \to \infty.
\end{align}
\end{theorem}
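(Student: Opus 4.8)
The plan is to convert the monotonicity of the misfit into a quantitative \emph{sufficient-decrease} estimate of the form
\begin{align}
\misfit^\delta(u^{(m),\delta}) - \misfit^\delta(u^{(m+1),\delta}) \;\geq\; \frac{\tolAngle^2}{2L}\,\|D\misfit^\delta(u^{(m),\delta})\|^2 ,
\end{align}
and then to sum this telescoping inequality. Since $\misfit^\delta \geq C$, the partial sums of $\sum_{m}\|D\misfit^\delta(u^{(m),\delta})\|^2$ are bounded by $\tfrac{2L}{\tolAngle^2}\bigl(\misfit^\delta(u^{(1),\delta}) - C\bigr) < \infty$ uniformly in the upper limit, so the series converges and its general term, hence $\|D\misfit^\delta(u^{(m),\delta})\|$, tends to $0$. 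Everything therefore reduces to establishing the displayed sufficient-decrease bound.

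First I would turn the directional Lipschitz estimate \eqref{eq:lipschitz.gateaux} into a quadratic upper bound (a ``descent lemma''): writing $\misfit^\delta(u+w)-\misfit^\delta(u)=\int_0^1 D\misfit^\delta(u+sw)w\,ds$ and subtracting $D\misfit^\delta(u)w$, the integrand is controlled by $Ls\|w\|_{H_1}^2$, which integrates to
\begin{align}
\misfit^\delta(u+w) \;\leq\; \misfit^\delta(u) + D\misfit^\delta(u)w + \tfrac{L}{2}\|w\|_{H_1}^2 \qquad \forall\, u,w\in H_1 .
\end{align}
The crucial—and most delicate—step is to extract a decrease without assuming convexity or that the correction $d^{(m),\delta}$ is itself a descent direction (which need not hold, since \eqref{eq:angle.condition} only controls the absolute value $|D\misfit^\delta(u^{(m),\delta})d^{(m),\delta}|$). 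To this end I would introduce the Riesz representative $\grad\misfit^\delta(u^{(m),\delta})\in H_1$ of the functional $D\misfit^\delta(u^{(m),\delta})$, so that $\|\grad\misfit^\delta(u^{(m),\delta})\|_{H_1}=\|D\misfit^\delta(u^{(m),\delta})\|$, together with the orthogonal projection $P_{m+1}$ onto the closed subspace $\Psi^{(m+1)}$. Applying the descent lemma along the \emph{feasible} direction $w = -t\,P_{m+1}\grad\misfit^\delta(u^{(m),\delta}) \in \Psi^{(m+1)}$ and minimizing the resulting parabola in $t$ (optimal $t=1/L$) produces a point of $\Psi^{(m+1)}$ whose misfit is at most $\misfit^\delta(u^{(m),\delta}) - \tfrac{1}{2L}\|P_{m+1}\grad\misfit^\delta(u^{(m),\delta})\|_{H_1}^2$. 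Because $u^{(m+1),\delta}$ is the \emph{exact} minimizer over all of $\Psi^{(m+1)}$ by \eqref{eq:algo.optimization}, it does at least as well, which yields the decrease bound with the projected gradient on the right.

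It remains to replace the projected gradient by the full one, and this is exactly what the angle condition buys. Since $d^{(m),\delta}=u^{(m+1),\delta}-u^{(m),\delta}\in\Psi^{(m+1)}$, orthogonality gives $D\misfit^\delta(u^{(m),\delta})d^{(m),\delta}=\langle P_{m+1}\grad\misfit^\delta(u^{(m),\delta}),d^{(m),\delta}\rangle$, so Cauchy–Schwarz together with \eqref{eq:angle.condition} yields $\tolAngle\|d^{(m),\delta}\|_{H_1}\|D\misfit^\delta(u^{(m),\delta})\|\leq \|P_{m+1}\grad\misfit^\delta(u^{(m),\delta})\|_{H_1}\|d^{(m),\delta}\|_{H_1}$, that is $\|P_{m+1}\grad\misfit^\delta(u^{(m),\delta})\|_{H_1}\geq \tolAngle\,\|D\misfit^\delta(u^{(m),\delta})\|$ (the case $d^{(m),\delta}=0$ being trivial, since the iteration has then stalled at a subspace minimizer). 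Substituting this into the projected decrease bound produces the sufficient-decrease inequality above, and the telescoping argument finishes the proof. I expect the main obstacle to be precisely the non-convex sign issue in the second paragraph; the projected-gradient detour, exploiting the exact subspace minimization in \eqref{eq:algo.optimization}, is what makes the argument go through without a descent-direction hypothesis on $d^{(m),\delta}$.
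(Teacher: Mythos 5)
Your proof is correct, and it is built from the same three ingredients as the paper's: a quadratic upper bound (descent lemma) derived from \eqref{eq:lipschitz.gateaux}, the exact minimality of $u^{(m+1),\delta}$ over $\Psi^{(m+1)}$ used to compare against a feasible trial point, and the angle condition to convert the resulting guaranteed decrease into $\tfrac{\tolAngle^2}{2L}\|D\misfit^\delta(u^{(m),\delta})\|^2$, which is then telescoped against the lower bound $\misfit^\delta\geq C$. The one genuine difference is the trial point: the paper steps from $u^{(m),\delta}$ along $\pm d^{(m),\delta}$ with an optimal step length $\alpha_m$, which yields the Zoutendijk-type decrease $\tfrac{1}{2L}\cos^{2}(\theta_m)\,\|D\misfit^\delta(u^{(m),\delta})\|^2$, whereas you step along $-\tfrac{1}{L}P_{m+1}\grad\misfit^\delta(u^{(m),\delta})$ and obtain the decrease $\tfrac{1}{2L}\|P_{m+1}\grad\misfit^\delta(u^{(m),\delta})\|_{H_1}^2$. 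Your projected-gradient bound is the (slightly) stronger intermediate estimate, since Cauchy--Schwarz gives $\|P_{m+1}\grad\misfit^\delta(u^{(m),\delta})\|_{H_1}\geq |D\misfit^\delta(u^{(m),\delta})d^{(m),\delta}|/\|d^{(m),\delta}\|_{H_1}$, and it dispenses with the sign bookkeeping $\mu_m$; both routes arrive at the same final sufficient-decrease constant. The only caveat, which your argument shares with the paper's (whose $\cos\theta_m$ is likewise undefined there), is the degenerate case $d^{(m),\delta}=0$: the angle condition is then vacuous and gives no control on $\|P_{m+1}\grad\misfit^\delta(u^{(m),\delta})\|_{H_1}$, so calling it ``trivial'' is slightly optimistic, but this is not a gap relative to the paper's own proof.
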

\begin{proof}
First, we claim that
\begin{align}\label{eq:ArmijoEstimate}
	\misfit^\delta(u^{(m+1),\delta}) - \misfit^\delta(u^{(m),\delta})\leq
	- \frac{\alpha_{m}}{2} \mu_mD\misfit^\delta(u^{(m),\delta})d^{(m),\delta},
\end{align}
where
\begin{align}\label{eq:AlphaMax}
	\alpha_m
	= \frac{\mu_m}{L}\frac{D\misfit^\delta(u^{(m),\delta})d^{(m),\delta}}{\|d^{(m),\delta}\|^{2}} > 0,
	\qquad \mu_m = \operatorname{sign}(D\misfit^\delta(u^{(m),\delta})d^{(m),\delta}).
\end{align}
As both $u^{(m+1),\delta},u^{(m),\delta} \in \Psi^{(m+1)}$, we also have $d^{(m),\delta} \in \Psi^{(m+1)}$.
Since $\misfit^\delta({u}^{(m+1),\delta}) \leq \misfit^\delta(u)$ for all $u \in \Psi^{(m+1)}$, we may choose $u = u^{(m),\delta} - \alpha_{m} \mu_m d^{(m),\delta}$.
The linearity of the derivative thus yields
\begin{align}
		&\misfit^\delta(u^{(m+1),\delta}) - \misfit^\delta(u^{(m),\delta}) + \alpha_{m} \mu_m D\misfit^\delta(u^{(m),\delta})d^{(m),\delta} \\
		&\hspace{2em}\leq \misfit^\delta(u^{(m),\delta} - \alpha_{m} \mu_m d^{(m),\delta}) - \misfit^\delta(u^{(m),\delta}) + \alpha_{m} \mu_m D\misfit^\delta(u^{(m),\delta})d^{(m),\delta} \\
		&\hspace{2em}= \alpha_{m}\mu_m \int_{0}^{1}\left( D\misfit^\delta(u^{(m),\delta})
						- D\misfit^\delta(u^{(m),\delta} - \tau \alpha_m \mu_m d^{(m),\delta}) \right) d^{(m),\delta} d\tau \\
		&\hspace{2em}\leq \frac{\alpha_{m} L}{2} \|d^{(m),\delta}\|^{2} \alpha_m
		= \frac{\alpha_{m}}{2} \mu_m D\misfit^\delta(u^{(m),\delta})d^{(m),\delta},
\end{align}
which proves \eqref{eq:ArmijoEstimate}.

Since $\misfit^\delta$ is bounded from below and decreases in every iteration, there exists a constant $C_{0} \geq 0$ such that by \eqref{eq:ArmijoEstimate}
\begin{align}
	- C_{0} &\leq C - \misfit^\delta(u^{(1),\delta}) 
			\leq \misfit^\delta(u^{(M+1),\delta}) - \misfit^\delta(u^{(1),\delta}) \label{eq:boundedness.misfit.below.above.begin}\\
			&= \sum_{m=1}^{M} \left(\misfit^\delta(u^{(m+1),\delta}) - \misfit^\delta(u^{(m),\delta})\right) \label{eq:InequalityArmijoLemma}
			\leq - \sum_{m=1}^{M} \frac{\alpha_{m}}{2} \mu_m D\misfit^\delta(u^{(m),\delta})d^{(m),\delta} \\
			&= - \frac{1}{2L} \sum_{m=1}^M \frac{(D\misfit^\delta(u^{(m),\delta}))^2}{\|d^{(m),\delta}\|^2}\leq 0. \label{eq:boundedness.misfit.below.above.end}
\end{align}
Next, we define the angle $\theta_{m}$ via
\begin{align}
	\cos (\theta_{m}) = \frac{\mu_mD\misfit^\delta(u^{(m),\delta})d^{(m),\delta}}{\|D\misfit^\delta(u^{(m),\delta})\| \|d^{(m),\delta}\|} > 0 
\end{align}
and rewrite \eqref{eq:boundedness.misfit.below.above.begin} -- \eqref{eq:boundedness.misfit.below.above.end} using \eqref{eq:AlphaMax} as
\begin{align}
	-C_0 \leq -\frac{1}{2L} \sum_{m=1}^{M} \cos^{2}(\theta_{m}) \|D\misfit^\delta(u^{(m),\delta})\|^{2} \leq 0.
\end{align}
Taking the limit $M \rightarrow \infty$ then yields the well known \emph{Zoutendijk condition} \cite{nocedal2006numerical}
\begin{align}
	\sum_{m=1}^{\infty} \cos^{2}(\theta_{m}) \|D\misfit^\delta(u^{(m),\delta})\|^{2} < \infty,
\end{align}
which immediately implies
\begin{align}
	\cos^{2}(\theta_{m}) \|D\misfit^\delta(u^{(m),\delta})\|^{2} \to 0, \qquad m \to \infty.
\end{align}
Since we optimize for $u^{(m),\delta}$ such that the angle condition \eqref{eq:angle.condition} holds uniformly in $m$, i.e.\ $\cos(\theta_m) \geq \tolAngle$ for all $m \geq 1$, with $0 < \tolAngle < 1$, we thus conclude that
\begin{align}
	\| D\misfit^\delta(u^{(m),\delta}) \| \to 0, \qquad m \to \infty.
\end{align}
\end{proof}
Theorem \ref{thm:convergence.gradient} implies that the Adaptive Inversion Algorithm yields a minimizing sequence $\{u^{(m),\delta}\}_{m\geq1}$ such that the gradient of the misfit tends to zero;
hence, the algorithm is well-defined and converges.
Without further assumptions, however, this does not imply that the sequence $\{u^{(m),\delta}\}_{m\geq1}$ converges (weakly) to a (local) minimizer or accumulation point.
Under further standard assumptions from convex optimization theory, one can even prove convergence to a minimizer of \eqref{eq:IP.misfit}, as in \cite[Corollary 11.30]{bauschke2017convex}. 
Those assumptions, however, seldom hold in practice for inverse medium problems.
\subsection{Regularization}
In the presence of perturbed noisy data, it makes little sense to improve the approximate solution $u^{(m),\delta}$ at iteration $m$ beyond the error $\delta$ in the observations. 
Instead for $\tau > 1$, we stop the Adaptive Inversion Algorithm at iteration $m_\ast(\delta)$ when the \emph{discrepancy principle} is satisfied:
\begin{align}\label{eq:discrepancy.principle}
	m_\ast(\delta) = m_\ast = \min \{m \in \IN: \; \|F(u^{(m),\delta}) - y^\delta\|_{H_2} \leq \tau \delta \}.
\end{align}
If we assume that the search spaces $\Psi^{(m)}$ satisfy
\begin{align}\label{eq:dimension.condition.basis}
	\| (I - \Pi_{\Psi^{(m)}}) u^{\dagger} \|_{H_1} \to 0, \qquad m \to \infty,
\end{align}
where $\Pi_{\Psi^{(m)}}$ denotes the projection into $\Psi^{(m)}$ and $u^\dagger$ the exact (noise-free) solution, the following lemma guarantees that the Adaptive Inversion Algorithm will always satisfy \eqref{eq:discrepancy.principle} after a finite number of steps.
\begin{lemma} \label{claim:discrepancy.principle.well.defined}
Suppose that the forward operator $F$ is continuous and that the spaces $\{\Psi^{(m)}\}_{m\geq1}$ satisfy \eqref{eq:dimension.condition.basis}.
Then, for $\tau > 1$, there exists for every $\delta \geq 0$ an index $m_\ast(\delta)$ such that the discrepancy principle \eqref{eq:discrepancy.principle} is satisfied.
\end{lemma}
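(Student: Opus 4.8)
The plan is to show that the set defining $m_\ast(\delta)$ in \eqref{eq:discrepancy.principle} is nonempty; being then a nonempty subset of $\IN$, it admits a minimum, which is exactly the asserted index. The engine of the argument is the minimizing property of the iterates combined with the approximation hypothesis \eqref{eq:dimension.condition.basis}.

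First I would compare $u^{(m),\delta}$ against the comparison element $\Pi_{\Psi^{(m)}} u^\dagger$, which lies in $\Psi^{(m)}$ by construction. Since $u^{(m),\delta}$ minimizes $\misfit^\delta$ over $\Psi^{(m)}$ by \eqref{eq:algo.optimization}, the inequality $\misfit^\delta(u^{(m),\delta}) \leq \misfit^\delta(\Pi_{\Psi^{(m)}} u^\dagger)$ holds, and taking square roots yields
\[
	\|F(u^{(m),\delta}) - y^\delta\|_{H_2} \leq \|F(\Pi_{\Psi^{(m)}} u^\dagger) - y^\delta\|_{H_2}.
\]

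Next I would pass to the limit on the right-hand side. By \eqref{eq:dimension.condition.basis} we have $\Pi_{\Psi^{(m)}} u^\dagger \to u^\dagger$ in $H_1$, and continuity of $F$ then gives $F(\Pi_{\Psi^{(m)}} u^\dagger) \to F(u^\dagger) = y^\dagger$ in $H_2$. Combining this with the triangle inequality and the noise bound \eqref{eq:noise},
\[
	\|F(\Pi_{\Psi^{(m)}} u^\dagger) - y^\delta\|_{H_2} \leq \|F(\Pi_{\Psi^{(m)}} u^\dagger) - y^\dagger\|_{H_2} + \delta,
\]
where the first summand on the right tends to $0$ as $m \to \infty$. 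For $\delta > 0$, since $\tau > 1$, I can choose $M$ so large that this first summand is at most $(\tau - 1)\delta$ for all $m \geq M$; chaining the two displays then gives $\|F(u^{(m),\delta}) - y^\delta\|_{H_2} \leq \tau\delta$ for every $m \geq M$, so the defining set is nonempty and $m_\ast(\delta)$ exists.

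The one delicate point, which I expect to be the main obstacle, is the noise-free case $\delta = 0$: there the threshold $\tau\delta$ collapses to $0$, and the argument above only yields $\|F(u^{(m),0}) - y^\dagger\|_{H_2} \to 0$ rather than exact equality at a finite index. Indeed, \eqref{eq:dimension.condition.basis} guarantees only that the projection error vanishes in the limit, not that $u^\dagger \in \Psi^{(m)}$ for some finite $m$, so in general no finite index with $F(u^{(m),0}) = y^\dagger$ need exist. I would accordingly either read the claim for $\delta > 0$ or interpret $m_\ast(0)$ in the limiting sense (permitting $m_\ast(0) = \infty$), which is consistent with the vanishing of $\|F(u^{(m),0}) - y^\dagger\|_{H_2}$ already established by the same projection estimate.
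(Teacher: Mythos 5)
Your proof is correct and follows essentially the same route as the paper: compare the minimizer against $\Pi_{\Psi^{(m)}}u^\dagger$, apply the triangle inequality and the noise bound, and invoke continuity of $F$ together with \eqref{eq:dimension.condition.basis}. Your observation about the case $\delta=0$ is a fair one --- the paper's own proof also only yields the conclusion for $\delta>0$, since for $\delta=0$ the argument gives $\|F(u^{(m),0})-y^\dagger\|_{H_2}\to 0$ rather than attainment at a finite index.
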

\begin{proof}
By minimality of $u^{(m),\delta}$, the residual is bounded by 
\begin{align}
	\|F(u^{(m),\delta}) - y^\delta\|_{H_2} &\leq \|F(\Pi_{\Psi^{(m)}}u^{\dagger}) - y^\delta\|_{H_2} \\
		&\leq \|F(\Pi_{\Psi^{(m)}}u^{\dagger}) - F(u^{\dagger})\|_{H_2}
			+ \|F(u^{\dagger}) - y^\delta\|_{H_2} \\
		&\leq \|F(\Pi_{\Psi^{(m)}}u^{\dagger}) - F(u^{\dagger})\|_{H_2} + \delta.
\end{align}
From the continuity of $F$ and \eqref{eq:dimension.condition.basis} we obtain
\begin{align}
	\|F(u^{(m),\delta}) - y^\delta\|_{H_2} \leq \tau \delta, \qquad \tau > 1,
\end{align}
for $m \geq M$ sufficiently large.
Thus $m_\ast(\delta) := M$ is always well defined.
\end{proof}

Next, we consider the sequence $\{ u^{(m_\ast),\delta} \}_{\delta \geq 0}$ obtained from the Adaptive Inversion Algorithm when stopped via the discrepancy principle \eqref{eq:discrepancy.principle}.
To show that it converges to the exact (noise-free) solution $u^\dagger$ as $\delta \to 0$, that is that
\begin{align}
	u^{(m_\ast),\delta} \to u^\dagger, \qquad \delta \to 0,
\end{align}
we assume that the forward operator $F$ is continuous with Fréchet derivative $DF(u)$ and that the standard \emph{tangential cone condition} (aka \emph{Scherzer condition}),
\begin{align}\label{eq:scherzer.condition}
	\| F(u) - F(v) - DF(u)(u-v) \|_{H_2}
	\leq \eta \|F(u) - F(v)\|_{H_2},
	\qquad \forall u,v \in H_1,
\end{align}
holds for some $\eta \in (0,1)$.
Moreover, we assume that $F$ is weakly sequentially closed.

Under all the above assumptions, we conclude from \cite[Theorem 3.4]{kaltenbacher2012aConvergence} that the Adaptive Inversion Algorithm is a genuine regularization method in the sense of \cite[Definition 3.1]{engl1996regularization}:

\begin{theorem}\label{thm:regularization}
Let $\Psi^{(m)}$ satisfy \eqref{eq:dimension.condition.basis}, $F$ be weakly sequentially closed, Fréchet differentiable and also satisfy \eqref{eq:scherzer.condition}.
Then, the sequence $\{u^{(m_\ast),\delta}\}_{\delta \geq 0}$ obtained from the Adaptive Inversion Algorithm \ref{algo:adaptive.inversion.algorithm}, stopped at iteration $m_\ast = m_\ast(\delta)$ according to \eqref{eq:discrepancy.principle},
admits a subsequence converging to a minimizer of $\misfit^\delta$.
Moreover, if the minimizer is unique, then the sequence $\{u^{(m_\ast),\delta}\}_{\delta \geq 0}$ converges to the exact (noise-free) solution $u^\dagger$ of the inverse problem \eqref{eq:IP.operator}, that is
\begin{align}
	u^{(m_\ast),\delta} \to u^\dagger, \qquad \delta \to 0.
\end{align}
\end{theorem}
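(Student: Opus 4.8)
The plan is to recognize that the Adaptive Inversion Algorithm, stopped via the discrepancy principle \eqref{eq:discrepancy.principle}, is nothing but an instance of \emph{regularization by projection} into preimage space, and then to verify that all hypotheses of the abstract convergence result \cite[Theorem 3.4]{kaltenbacher2012aConvergence} are met so that its conclusion may be invoked directly. The only genuinely new content beyond the citation is the upgrade from subsequential to full convergence under uniqueness, which follows from a standard argument.

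First, I would identify the regularized solutions explicitly. By line 2 of the algorithm, see \eqref{eq:algo.optimization}, each iterate $u^{(m),\delta}$ minimizes $\misfit^\delta$ over the finite-dimensional subspace $\Psi^{(m)}$, equivalently it minimizes $\|F(u) - y^\delta\|_{H_2}$ over $\Psi^{(m)}$. Hence the family $\{u^{(m),\delta}\}$ is precisely the sequence of projected least-squares solutions studied in the projection-regularization literature, with the discretization dimension (or its reciprocal) playing the role of the regularization parameter and the stopping index $m_\ast(\delta)$ fixing its value.

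Second, I would collect the hypotheses of the cited theorem and check each against the standing assumptions: (i) the approximation property \eqref{eq:dimension.condition.basis}, $\|(I-\Pi_{\Psi^{(m)}})u^\dagger\|_{H_1}\to 0$, which guarantees that the subspaces asymptotically recover the exact solution; (ii) the regularity of $F$, namely continuity, Fréchet differentiability and weak sequential closedness; (iii) the control of the nonlinearity through the tangential cone condition \eqref{eq:scherzer.condition} with $\eta\in(0,1)$; and (iv) a well-defined stopping rule, since Lemma \ref{claim:discrepancy.principle.well.defined} ensures that for every $\delta\geq 0$ and $\tau>1$ the discrepancy index $m_\ast(\delta)$ is finite. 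With all hypotheses in place, \cite[Theorem 3.4]{kaltenbacher2012aConvergence} yields a subsequence of $\{u^{(m_\ast),\delta}\}_{\delta\geq 0}$ that converges, as $\delta\to 0$, to a minimizer of $\misfit^\delta$, which is the first assertion.

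Finally, I would upgrade subsequential to full convergence under the uniqueness hypothesis by the usual contradiction argument. If the whole family did not converge to the unique minimizer $u^\dagger$, there would exist $\epsilon>0$ and a sequence $\delta_k\to 0$ with $\|u^{(m_\ast),\delta_k}-u^\dagger\|_{H_1}\geq\epsilon$; applying the subsequence result to this sequence produces a further subsequence converging to a minimizer, which by uniqueness equals $u^\dagger$, contradicting the lower bound. The main obstacle I anticipate is conceptual rather than computational: the cited theorem is typically phrased for a fixed, a-priori-chosen sequence of discretization spaces, whereas here the $\Psi^{(m)}$ are built adaptively and depend on the iterates, and hence on the data $y^\delta$. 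The crux is therefore to confirm that the proof of \cite[Theorem 3.4]{kaltenbacher2012aConvergence} invokes only the limiting approximation property \eqref{eq:dimension.condition.basis} together with the regularity and nonlinearity conditions, and nowhere secretly requires the subspaces to be independent of the data; should such a hidden requirement surface, one would instead need a bespoke argument furnishing uniform bounds over the data-dependent adaptive family.
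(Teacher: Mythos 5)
Your proposal follows essentially the same route as the paper, which itself gives no separate proof but simply verifies the standing assumptions (the approximation property \eqref{eq:dimension.condition.basis}, weak sequential closedness and Fréchet differentiability of $F$, the tangential cone condition \eqref{eq:scherzer.condition}, and the well-posedness of the stopping rule via Lemma \ref{claim:discrepancy.principle.well.defined}) and invokes \cite[Theorem 3.4]{kaltenbacher2012aConvergence}, with the uniqueness case handled by the standard subsequence argument. Your closing caveat about the data-dependence of the adaptive subspaces is a fair observation, but it does not alter the fact that your argument coincides with the paper's.
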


\begin{remark}
If the nullspace of the Fréchet derivative $DF(u)$ is trivial, the Scherzer condition \eqref{eq:scherzer.condition} immediately implies the uniqueness of $u^{\dagger}$. Indeed, from 
\eqref{eq:scherzer.condition} and the triangle inequality, we first deduce that
\begin{align}
	\| DF(u)(u-v)\| \leq (1+\eta)\|F(u)-F(v)\| \qquad \forall u,v \in H_1.
\end{align}
Given two distinct solutions $u^{\dagger,\delta} \neq v^{\dagger,\delta}$ to \eqref{eq:IP.operator}, we
then infer that 
\begin{align}
	0 = \|F(u^{\dagger,\delta}) - F(v^{\dagger,\delta})\|
	\geq \frac{1}{\eta + 1} \|DF(u^{\dagger,\delta})(u^{\dagger,\delta} - v^{\dagger,\delta})\|,
\end{align}
which yields the sought contradiction and hence the uniqueness of $u^{\dagger}$.
Thus, if the nullspace of the Fréchet derivative $DF(u)$ is trivial $\forall u\in H_1$, then 
Theorem \ref{thm:regularization} immediately implies the convergence of $\{u^{(m_\ast),\delta}\}_{\delta \geq 0}$ to the exact solution. Clearly, those assumptions may not hold in practice, for instance, in a situation of  limited indirect observations.
\end{remark}

In summary, the Adaptive Inversion Algorithm generates a minimizing sequence $\{u^{(m),\delta}\}_{m \geq 1}$ for $\misfit^\delta$ and, under standard assumptions, Theorem \ref{thm:convergence.gradient} implies that the gradient of $\misfit^\delta$ tends to zero; hence, the algorithm is well defined and terminates.
In fact, under standard assumptions from convex optimization theory, there exists for every $\delta$ a minimizer $u^{\dagger,\delta}$ with $u^{(m),\delta} \to u^{\dagger,\delta}$ as $m \to \infty$.
Moreover, under standard assumptions for inverse problems, we deduce that the Adaptive Inversion Algorithm yields a genuine regularization method;
thus, $u^{(m_\ast),\delta}$ converges to the exact (noise-free) solution $u^{\dagger}$ of \eqref{eq:IP.operator} as $\delta \to 0$.
\section{Adaptive Spectral Decomposition}\label{sec:asd}
To apply the Adaptive Inversion Algorithm from Section \ref{sec:giaa}, we must specify the search spaces $\Psi^{(m)}$ in practice.
Here we shall consider low-dimensional search spaces spanned by the first few eigenfunctions of a judicious elliptic operator, which itself depends on the previous iterate.
By combining those search spaces with the Adaptive Inversion Algorithm, we shall devise in Section \ref{sec:asi} the Adaptive Spectral Inversion Algorithm for the solution of inverse medium problems.

Consider an arbitrary bounded function $u: \Omega \to \IR$ on a bounded Lipschitz domain $\Omega \subset \IR^d$, $d \geq 2$, which vanishes on the boundary.
Next, let $V_h \subset H_0^1(\Omega)$ be the finite element space of continuous, piecewise polynomials of degree $r \geq 1$, on a regular and quasi-uniform mesh with mesh-size $h>0$.
Denoting by $u_h \in V_h$ the standard $\mathbb{P}^r$-FE interpolant of $u$, we introduce the differential operator
\begin{align}\label{eq:as.operator}
	L_\varepsilon[u_h] v = - \nabla \cdot \left(\mu_\varepsilon[u_h] \nabla v \right),
	\quad
	\mu_\varepsilon[u_h] = \frac{1}{\sqrt{|\nabla u_h|^2 + \varepsilon^2}},
	\qquad \varepsilon > 0.
\end{align}
In contrast to $u$, which may be discontinuous, $u_h$ is continuous and piecewise polynomial with $\nabla u_h \in L^\infty(\Omega)$.
For $\varepsilon$ small, $L_\varepsilon[u_h]$ in \eqref{eq:as.operator} is well defined and for $h$ small even uniformly elliptic in $\Omega$ \cite{baffet2022error};
in practice, we always set $\varepsilon = 10^{-8}$.
Hence, there exists a non-decreasing sequence of strictly positive eigenvalues $\{\lambda_k\}_{k\geq1}$ with corresponding eigenfunctions $\{\varphi_k\}_{k\geq1}$ satisfying 
\begin{align}\label{eq:as.eigenfunctions}
	L_\varepsilon[u_h] \varphi_k = \lambda_k \varphi_k \quad\text{in}\; \Omega,
	\qquad
	\varphi_k = 0 \quad\text{on}\;\partial\Omega.
\end{align}
Moreover, the eigenfunctions of $L_\varepsilon[u_h]$ form an $L^2$-orthonormal basis of $L^2(\Omega)$.
\begin{figure}[ht!]
	\begin{subfigure}[c]{0.45\textwidth}
		\includegraphics[width=1\textwidth]{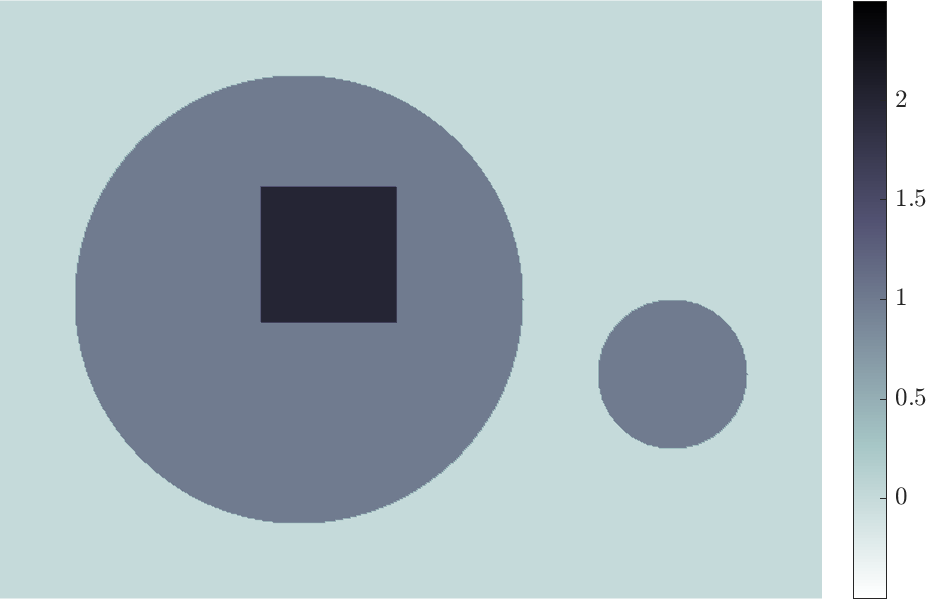}
		\caption{$u$ (or $u_h$)}
		\label{fig:asd.sets.ef.eig.medium}
	\end{subfigure}
	\hfill
	\begin{subfigure}[c]{0.45\textwidth}
		\includegraphics[width=1\textwidth]{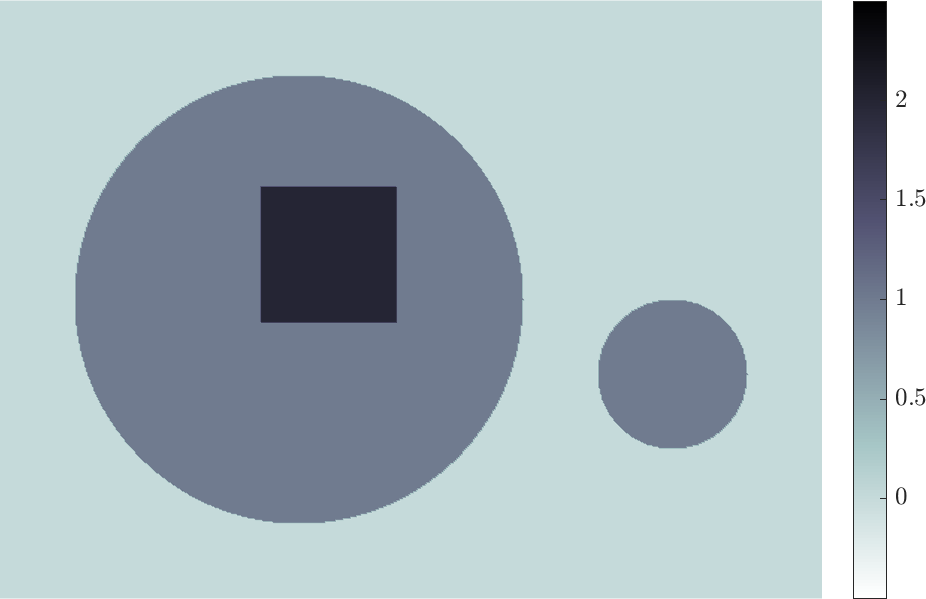}
		\caption{$\Pi_K[u_h]u$ for $K = 3$}
		\label{fig:asd.sets.ef.eig.asd}
	\end{subfigure}
	\caption{AS decomposition:
			A piecewise constant function $u:\Omega\to\IR$ and its projection $\Pi_K[u_h]u$ onto the first $K=3$ eigenfunctions from \eqref{eq:as.eigenfunctions}.}
\end{figure}
\begin{figure}[ht!]
	\begin{subfigure}[c]{0.32\textwidth}
		\includegraphics[width=1\textwidth]{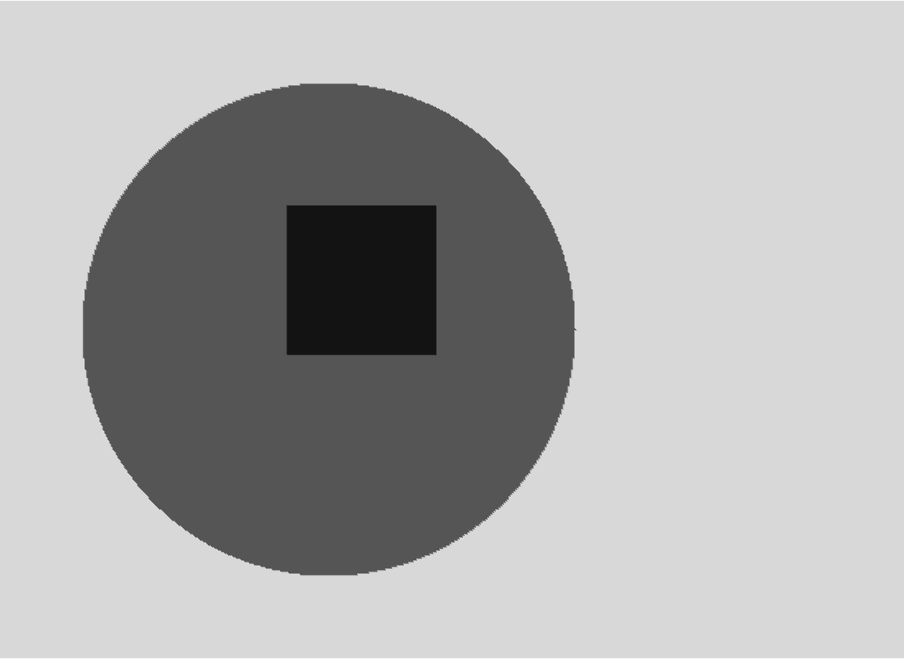}
		\caption{$\varphi_1$, $\lambda_1 \approx 1.5$}
		\label{fig:asd.sets.ef.eig.ef1}
	\end{subfigure}
	\hfill
	\begin{subfigure}[c]{0.32\textwidth}
		\includegraphics[width=1\textwidth]{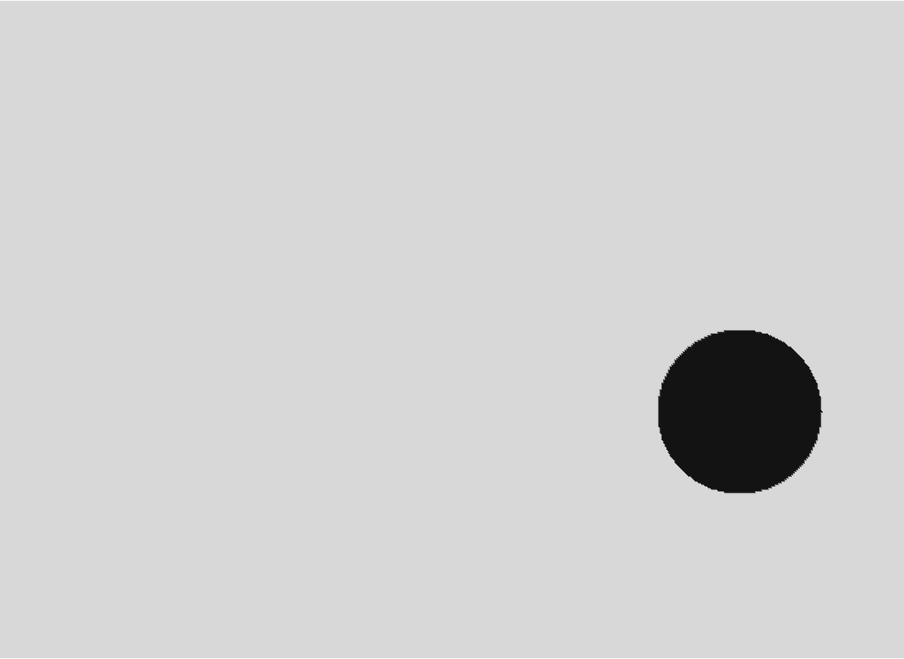}
		\caption{$\varphi_2$, $\lambda_2 \approx 4.7$}
		\label{fig:asd.sets.ef.eig.ef2}
	\end{subfigure}
	\hfill
	\begin{subfigure}[c]{0.32\textwidth}
		\includegraphics[width=1\textwidth]{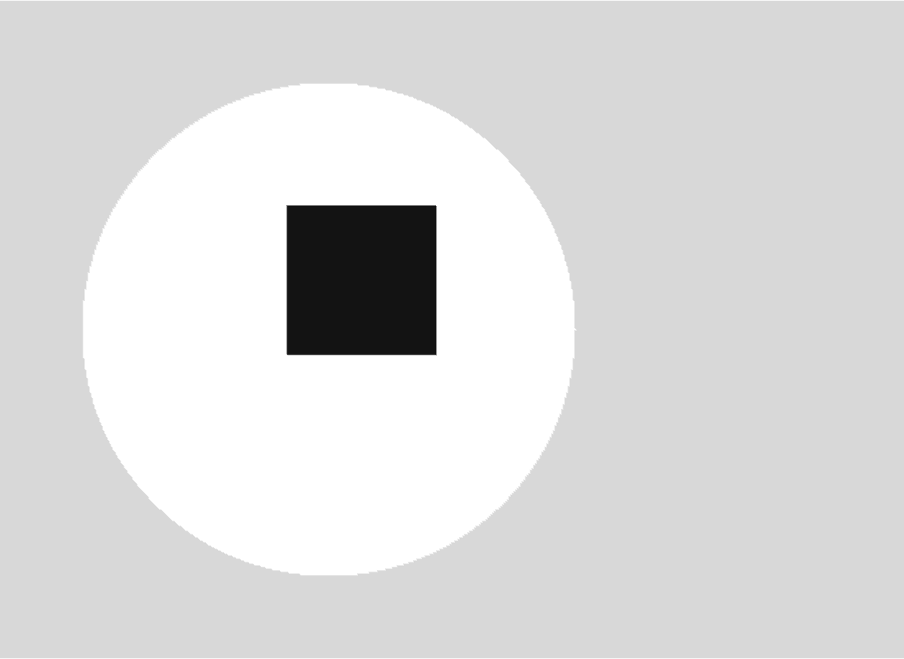}
		\caption{$\varphi_3$, $\lambda_3 \approx 5.3$}
		\label{fig:asd.sets.ef.eig.ef3}
	\end{subfigure}
	\\[2ex]
	\begin{subfigure}[c]{0.32\textwidth}
		\includegraphics[width=1\textwidth]{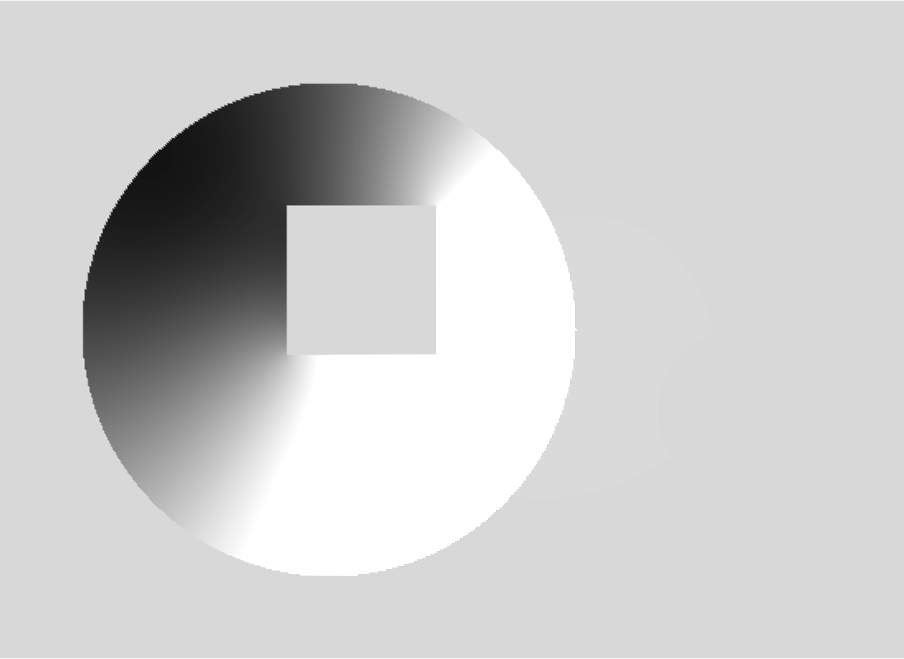}
		\caption{$\varphi_4$, $\lambda_4 \approx 1.0 \cdot 10^{8}$}
		\label{fig:asd.sets.ef.eig.ef4}
	\end{subfigure}
	\hfill
	\begin{subfigure}[c]{0.32\textwidth}
		\includegraphics[width=1\textwidth]{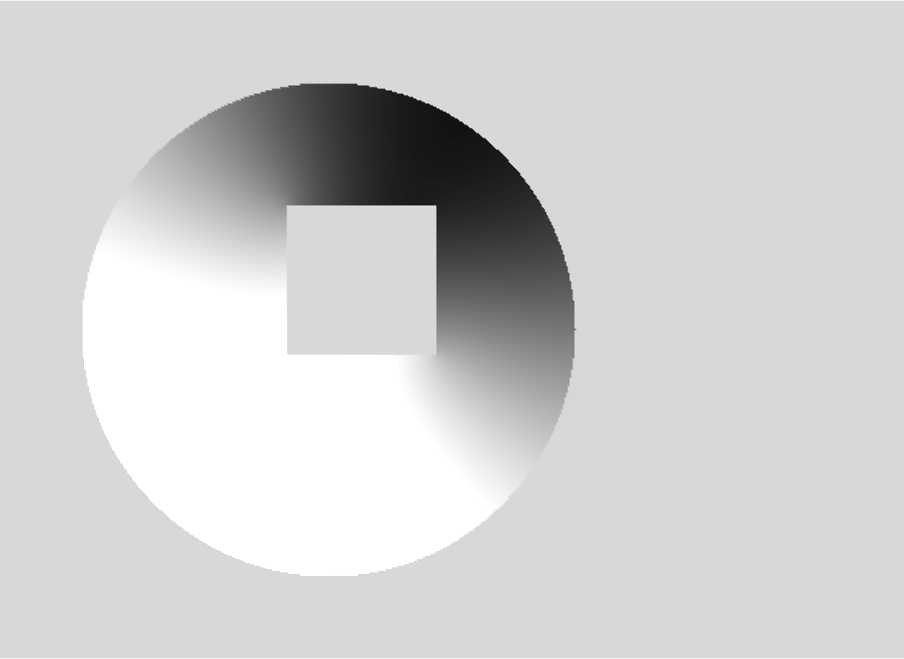}
		\caption{$\varphi_5$, $\lambda_5 \approx 1.1 \cdot 10^{8}$}
		\label{fig:asd.sets.ef.eig.ef5}
	\end{subfigure}
	\hfill\begin{subfigure}[c]{0.32\textwidth}
		\includegraphics[width=1\textwidth]{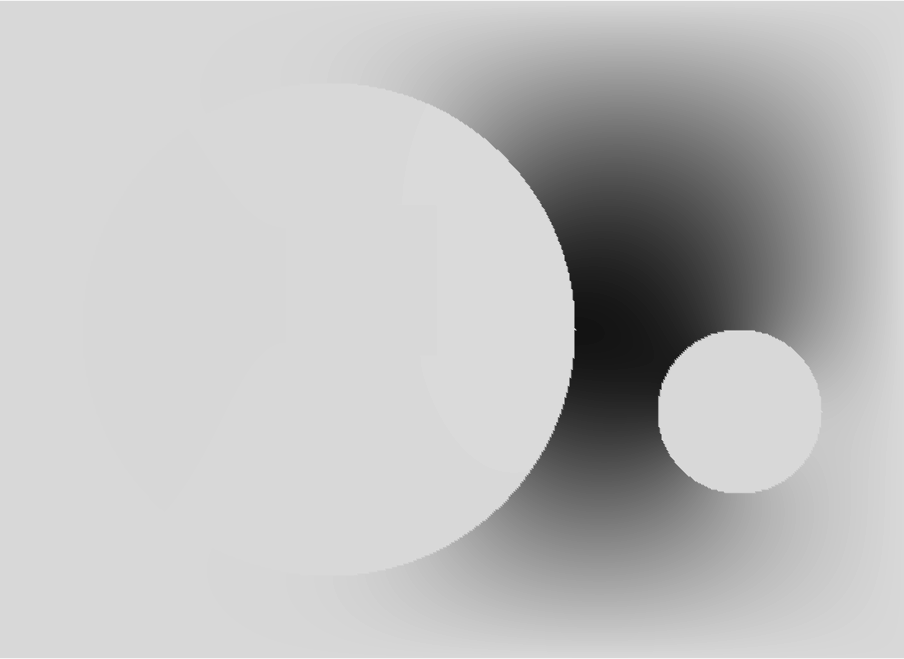}
		\caption{$\varphi_6$, $\lambda_6 \approx 1.3 \cdot 10^{8}$}
		\label{fig:asd.sets.ef.eig.ef6}
	\end{subfigure}
	\caption{AS decomposition:
			 The first six eigenfunctions $\varphi_k$, $k = 1, \ldots, 6$ of $L_\varepsilon[u_h]$, with its corresponding eigenvalues $\lambda_k$ satisfying \eqref{eq:as.eigenfunctions} for $u_h$ as in Figure \ref{fig:asd.sets.ef.eig.medium}.}
	\label{fig:asd.sets.ef.eig}
\end{figure}

For $K \geq 1$,  we let $\Phi_K = \Span\{\varphi_1,\ldots,\varphi_K\}$ and denote by $\Pi_K[u_h]$ the $L^2$-projection into $\Phi_K$:
\begin{align}\label{eq:as.decomposition}
	\Pi_K[u_h]: L^2(\Omega) \to \IR, \qquad (v - \Pi_K[u_h]v,\varphi)_{L^2(\Omega)} = 0 \qquad\forall\varphi\in\Phi_K, \forall v\in L^2(\Omega).
\end{align}
We call
\begin{align}
	\Pi_K[u_h]u = \sum_{k=1}^{K} \beta_k \varphi_k, \qquad \beta_k\in\IR
\end{align}
the (truncated) \emph{Adaptive Spectral (AS) decomposition}  of $u$.
Note that the operator $L_\varepsilon[u_h]$ depends on $u_h$ (and $\varepsilon$), and hence so do its eigenfunctions as well as $\Pi_K[u_h]$.

To illustrate the AS decomposition, consider the piecewise constant function $u:\Omega\subset\IR^2 \to \IR$ shown in Figure \ref{fig:asd.sets.ef.eig.medium}.
Next, we (numerically) solve the eigenvalue problem \eqref{eq:as.eigenfunctions} with $\varepsilon = 10^{-8}$ using a $\mathbb{P}^1$-FE discretization with mesh-size $h = 0.01$.
As shown in Figure \ref{fig:asd.sets.ef.eig}, the first three eigenfunctions $\varphi_k$, $k=1,2,3$ capture the inclusions rather well, whereas the subsequent eigenfunctions resemble eigenfunctions of the Laplacian with eigenvalues that scale as $1/\varepsilon$.
Next, we project $u$ into $\Phi_3 = \Span\{\varphi_1,\varphi_2,\varphi_3\}$.
As shown in Figure \ref{fig:asd.sets.ef.eig.asd}, the projection $\Pi_3[u_h]u$ is hardly distinguishable from $u$.

In \cite{baffet2022error,baffet2021adaptive}, rigorous $L^2$-error bounds were derived for the AS decomposition.
More precisely, consider a piecewise constant function with zero boundary values of the form
\begin{align}\label{eq:u.decomposed}
	u = \sum_{k=1}^K \alpha_k \chi_{A_k}, \qquad \alpha_k \in \IR\smallsetminus\{0\},
\end{align}
where $\chi_{A_k}$ is the characteristic function of Lipschitz domains $A_k \subset\subset \Omega$ with mutually disjoint and connected boundaries $\partial A_k$.

Next we partition $\Omega = \overline{M_h \cup D_h}$ into the two open sets
\begin{align}\label{eq:asd.sets}
	M_h = \bigcup_{k=1}^K \left\{x \in \Omega : \dist(x, \partial A_k) < h \right\},
	\qquad
	D_h = \Omega \smallsetminus \overline{M_h},
\end{align}
where $M_h$ is the open tube with diameter $h > 0$ around the jump discontinuities of $u$.
In Figure \ref{fig:asd.sets}, for instance, the sets $A_k$, $M_h$, and $D_h$ are shown for $u$ as in Figure \ref{fig:asd.sets.ef.eig.medium}.
\begin{figure}[ht!]
	\begin{subfigure}[c]{0.45\textwidth}
		\centering
		\includegraphics[width=1\textwidth]{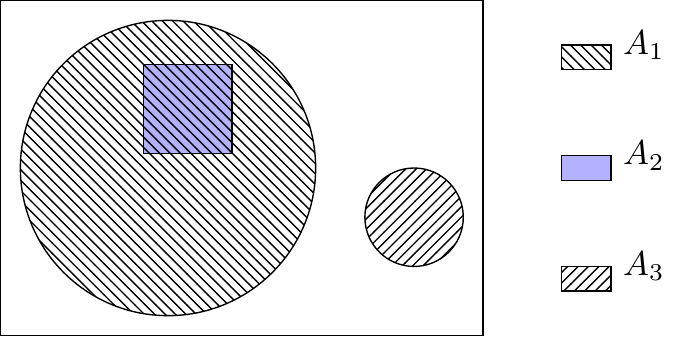}
	\end{subfigure}
	\hfill	
	\begin{subfigure}[c]{0.45\textwidth}
		\centering
		\includegraphics[width=1\textwidth]{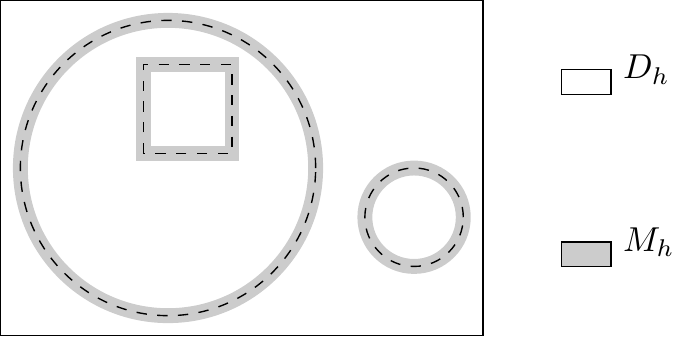}
	\end{subfigure}
	\caption{AS decomposition:
		The sets $A_k$, $k = 1,2,3$, $M_h$, and $D_h$ from \eqref{eq:asd.sets} for $u$ as in Figure \ref{fig:asd.sets.ef.eig.asd}.}
	\label{fig:asd.sets}
\end{figure}
Clearly, the FE-interpolant $u_h \in V_h$ of $u$ satisfies $u_h = u$ and $\nabla u_h = 0$ in $D_h$.
Moreover, for a sequence of regular and quasi-uniform meshes $(\mathcal{T}_h)_{h > 0}$, we have
\begin{align}\label{eq:admin.approx.properties}
	\lim_{h \to 0} \|u_h - u\|_{L^2(\Omega)} = 0,
	\qquad
	\nabla u_h \in L^\infty,
	\qquad
	\supp(\nabla u_h) \subset \overline{M_h},
\end{align}
and $h \|u_h\|_{L^\infty} \leq C$ with $C > 0$ independent of $h$, \cite[Proposition 2.2]{baffet2022error}.
In \cite{baffet2021adaptive}, those properties led to the following upper bound for the first $K$ eigenfunctions.
\begin{theorem}\label{thm:estimate.eigenfunctions}
Let $u$ be given by \eqref{eq:u.decomposed}, $u_h \in V_h$ be its FE interpolant, and $\varphi_k$, $k \geq 1$ the eigenfunctions of the AS operator $L_\varepsilon[u_h]$.
Then, for every $\varepsilon > 0$ and $h > 0$ sufficiently small, there exists a constant $C > 0$, independent of $\varepsilon$ and $h$, such that 
\begin{align}\label{eq:estimate.eigenfunctions}
	\|\nabla \varphi_k \|_{L^2(D_h)} \leq C \sqrt{\varepsilon}, \qquad k = 1, \ldots, K.
\end{align}
\end{theorem}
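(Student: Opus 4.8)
The plan is to reduce \eqref{eq:estimate.eigenfunctions} to a uniform bound on the first $K$ eigenvalues by exploiting the weak form of \eqref{eq:as.eigenfunctions} together with the fact that $\mu_\varepsilon[u_h]$ is as large as $1/\varepsilon$ precisely on $D_h$. First I would test the eigenvalue problem with $\varphi_k$ itself; since the eigenfunctions are $L^2$-normalized this gives the energy identity
\begin{align}
	\int_\Omega \mu_\varepsilon[u_h]\,|\nabla\varphi_k|^2\,dx = \lambda_k\,\|\varphi_k\|_{L^2(\Omega)}^2 = \lambda_k.
\end{align}
By \eqref{eq:admin.approx.properties} we have $\nabla u_h = 0$ on $D_h$, so $\mu_\varepsilon[u_h] = 1/\varepsilon$ there, while $\mu_\varepsilon[u_h] > 0$ on all of $\Omega$. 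Discarding the nonnegative contribution from $M_h$ therefore yields
\begin{align}
	\frac{1}{\varepsilon}\,\|\nabla\varphi_k\|_{L^2(D_h)}^2 \leq \int_\Omega \mu_\varepsilon[u_h]\,|\nabla\varphi_k|^2\,dx = \lambda_k,
\end{align}
so that $\|\nabla\varphi_k\|_{L^2(D_h)} \leq \sqrt{\lambda_k}\,\sqrt{\varepsilon}$. Hence the entire theorem reduces to showing that $\lambda_k \leq \lambda_K \leq C$ for $k = 1,\dots,K$, with $C$ independent of $\varepsilon$ and $h$.

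To bound $\lambda_K$ I would invoke the Courant--Fischer min--max principle,
\begin{align}
	\lambda_K = \min_{\substack{S\subset H_0^1(\Omega)\\ \dim S = K}}\ \max_{v\in S\setminus\{0\}}\ \frac{\int_\Omega \mu_\varepsilon[u_h]\,|\nabla v|^2\,dx}{\|v\|_{L^2(\Omega)}^2},
\end{align}
and exhibit a single convenient $K$-dimensional trial space that makes the Rayleigh quotient bounded. The natural choice is $S = \Span\{u_h^{(1)},\dots,u_h^{(K)}\}$, where $u_h^{(j)}\in V_h$ is the $\mathbb{P}^r$-interpolant of $\chi_{A_j}$. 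The crucial structural fact is that, near $\partial A_j$, only $\chi_{A_j}$ varies (the remaining characteristic functions being locally constant there), so for $h$ sufficiently small $\nabla u_h = \alpha_j\,\nabla u_h^{(j)}$ on $\supp(\nabla u_h^{(j)})$, whence $|\nabla u_h| = |\alpha_j|\,|\nabla u_h^{(j)}|$ on that set. Using $\mu_\varepsilon[u_h] \leq 1/|\nabla u_h|$ I obtain the pointwise bound $\mu_\varepsilon[u_h]\,|\nabla u_h^{(j)}|^2 \leq |\nabla u_h^{(j)}|/|\alpha_j|$, and therefore
\begin{align}
	\int_\Omega \mu_\varepsilon[u_h]\,|\nabla u_h^{(j)}|^2\,dx \leq \frac{1}{|\alpha_j|}\,\|\nabla u_h^{(j)}\|_{L^1(\Omega)} \leq C_1,
\end{align}
the last step being the standard uniform (BV/perimeter-type) bound $\|\nabla u_h^{(j)}\|_{L^1}\approx \mathcal{H}^{d-1}(\partial A_j)$ that accompanies \eqref{eq:admin.approx.properties}.

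Since the $\partial A_j$ are mutually disjoint, for $h$ small the gradient supports $\supp(\nabla u_h^{(j)})$ — thin tubes around the $\partial A_j$ — are pairwise disjoint, so for $v = \sum_j c_j u_h^{(j)}$ all cross terms in the numerator vanish and $\int_\Omega \mu_\varepsilon[u_h]\,|\nabla v|^2\,dx = \sum_j c_j^2 \int_\Omega \mu_\varepsilon[u_h]\,|\nabla u_h^{(j)}|^2\,dx \leq C_1\sum_j c_j^2$. The functions $u_h^{(j)}$ are moreover linearly independent with an $L^2$-Gram matrix bounded below uniformly in $h$ (because $u_h^{(j)}\to\chi_{A_j}$ in $L^2$ and the $\chi_{A_j}$ are independent), giving $\|v\|_{L^2(\Omega)}^2 \geq c_2\sum_j c_j^2$. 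The Rayleigh quotient over $S$ is thus bounded by $C_1/c_2$, so $\lambda_K \leq C := C_1/c_2$, uniformly in $\varepsilon$ and $h$. Inserting $\lambda_k \leq C$ into the first paragraph establishes \eqref{eq:estimate.eigenfunctions} with constant $\sqrt{C}$.

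I expect the main obstacle to be the uniform eigenvalue bound, and within it the two geometric facts that both force the hypothesis ``$h$ sufficiently small'': the local decoupling $\nabla u_h = \alpha_j\nabla u_h^{(j)}$ across each interface, which is what guarantees that $\mu_\varepsilon[u_h]$ is genuinely small exactly where $\nabla u_h^{(j)}$ is supported, and the pairwise disjointness of the tubes $\supp(\nabla u_h^{(j)})$ needed to kill the cross terms. The accompanying $L^1$-gradient estimate, though routine, is where the scaling must balance ($\mu_\varepsilon \sim h$ where the gradient is active, tube volume $\sim h$, and $|\nabla u_h^{(j)}| \sim 1/h$), and it is precisely the place where the interpolant properties \eqref{eq:admin.approx.properties} are essential.
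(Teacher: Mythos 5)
Your proposal is correct and follows essentially the same route as the proof the paper relies on (the theorem is quoted from \cite{baffet2021adaptive} without proof here): a uniform bound $\lambda_k\leq C$ for $k\leq K$ via the min--max principle with the interpolated characteristic functions as trial space, combined with the energy identity and $\mu_\varepsilon[u_h]=1/\varepsilon$ on $D_h$ to obtain $\|\nabla\varphi_k\|_{L^2(D_h)}^2\leq\varepsilon\lambda_k$. The geometric ingredients you flag (disjoint gradient tubes, the $L^1$-gradient/perimeter bound, and the uniform positive-definiteness of the Gram matrix) are exactly the points handled in the cited argument.
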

From Theorem \ref{thm:estimate.eigenfunctions} we conclude that the first $K$ eigenfunctions of $L_\varepsilon[u_h]$ are almost piecewise constant in $D_h$, that is away from any discontinuities, when $u$ is piecewise constant with $K$ inclusions.
This indicates that the AS decomposition \eqref{eq:as.decomposition} is able to approximate $u$ well throughout $\Omega$, which was proved in \cite{baffet2022error}:
\begin{theorem}\label{thm:estimate.asd}
Let $u$ be given by \eqref{eq:u.decomposed}, $u_h \in V_h$ be its FE interpolant, $\varphi_k$, $k \geq 1$ the eigenfunctions of the operator $L_\varepsilon[u_h]$ and $\Pi_K[u_h]u$ be its projection.
Then, for every $\varepsilon > 0$ and $h > 0$ sufficiently small, there exists a constant $C > 0$, independent of $\varepsilon$ and $h$, but possibly depending on $u$, such that 
\begin{align}
	\|u - \Pi_K[u_h]u\|_{L^2(\Omega)} \leq C \sqrt{h + \varepsilon}.
\end{align}
\end{theorem}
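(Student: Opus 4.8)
The plan is to exploit the quasi-optimality of the $L^2$-projection: since $\Pi_K[u_h]$ is the orthogonal projection onto $\Phi_K$, we have $\|u - \Pi_K[u_h]u\|_{L^2(\Omega)} \leq \|u - w\|_{L^2(\Omega)}$ for every $w \in \Phi_K$. It therefore suffices to exhibit a single competitor $w = \sum_{k=1}^K \beta_k \varphi_k$ whose distance to $u$ is of order $\sqrt{h+\varepsilon}$. The construction rests on the splitting $\Omega = \overline{M_h \cup D_h}$ from \eqref{eq:asd.sets}: on $D_h$ the target $u$ is exactly piecewise constant, whereas $M_h$ is a thin tube of volume $|M_h| = O(h)$, so any error committed there is automatically of order $\sqrt{h}$ as long as $w$ stays bounded.

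First I would make precise the sense in which the first $K$ eigenfunctions are piecewise constant on $D_h$. Write $D_h = D_h^0 \cup \bigcup_{j=1}^K D_h^j$, where $D_h^0$ is the background component meeting $\partial\Omega$ and $D_h^j \subset A_j$. By Theorem \ref{thm:estimate.eigenfunctions}, $\|\nabla\varphi_k\|_{L^2(D_h)} \leq C\sqrt{\varepsilon}$; a Poincaré inequality on each inclusion component then gives $\|\varphi_k - a_{kj}\|_{L^2(D_h^j)} \leq C\sqrt{\varepsilon}$ with $a_{kj}$ the mean of $\varphi_k$ over $D_h^j$, while on $D_h^0$ the Friedrichs inequality (using $\varphi_k = 0$ on $\partial\Omega \subset \partial D_h^0$) yields $\|\varphi_k\|_{L^2(D_h^0)} \leq C\sqrt{\varepsilon}$; the Poincaré/Friedrichs constants stay bounded as $h \to 0$ because $D_h^j \to A_j$. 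Setting $\beta = A^{-1}\alpha$ with $A = (a_{kj})_{k,j=1}^K$ and $\alpha = (\alpha_1,\dots,\alpha_K)^\top$, the resulting $w$ is then within $O(\sqrt{\varepsilon})$ of $\alpha_j$ on each $D_h^j$ and within $O(\sqrt{\varepsilon})$ of $0$ on $D_h^0$, so that $\|u - w\|_{L^2(D_h)} \leq C\sqrt{\varepsilon}$, provided $\|\beta\|$ stays bounded.

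The crux is that $A$ be invertible with $\|A^{-1}\|$ bounded uniformly in $h$ and $\varepsilon$. I would argue by contradiction: a near-null vector $c$ of $A$ produces $\psi = \sum_k c_k\varphi_k \in \Phi_K$ that is small together with its gradient throughout $D_h$, hence essentially supported in $M_h$, while $\|\psi\|_{L^2(\Omega)} = \|c\|$ by orthonormality. A function of unit $L^2$-mass concentrated in a tube of width $h$ must have $L^2$-gradient of order $h^{-3/2}$ there; weighing this against the coefficient $\mu_\varepsilon \sim h$ on $M_h$ forces the weighted Dirichlet energy $\int_\Omega \mu_\varepsilon|\nabla\psi|^2 \gtrsim h^{-1}$, which contradicts $\int_\Omega \mu_\varepsilon|\nabla\psi|^2 = \sum_k c_k^2\lambda_k \leq \lambda_K\|c\|^2 = O(1)$ once $h$ is small. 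Here I use that the first $K$ eigenvalues remain $O(1)$, as is established alongside Theorem \ref{thm:estimate.eigenfunctions}; this also gives $\|\beta\| = \|A^{-1}\alpha\| \leq C$.

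It remains to control $w$ on the tube, and this I expect to be the main obstacle. Since $\|u\|_{L^\infty} < \infty$ and $|M_h| = O(h)$, one has $\|u\|_{L^2(M_h)} \leq C\sqrt{h}$, so the only missing ingredient is $\|w\|_{L^2(M_h)} \leq C\sqrt{h}$. This requires a uniform bound $\|\varphi_k\|_{L^\infty(\Omega)} \leq C$, independent of $h$ and $\varepsilon$, which is delicate because $L_\varepsilon[u_h]$ is highly degenerate, its coefficient jumping from $\mu_\varepsilon \sim \varepsilon^{-1}$ on $D_h$ to $\mu_\varepsilon \sim h$ on $M_h$. Granting such a bound, which follows from the elliptic-regularity analysis underlying \cite{baffet2021adaptive}, we get $\|w\|_{L^2(M_h)} \leq C\max_k \|\varphi_k\|_{L^\infty(\Omega)}\sqrt{|M_h|} \leq C\sqrt{h}$. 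Combining the two regions, $\|u - w\|_{L^2(\Omega)}^2 = \|u-w\|_{L^2(D_h)}^2 + \|u-w\|_{L^2(M_h)}^2 \leq C(\varepsilon + h)$, and quasi-optimality yields $\|u - \Pi_K[u_h]u\|_{L^2(\Omega)} \leq C\sqrt{h+\varepsilon}$, with $C$ depending on $u$ through the geometry of the $A_j$, the values $\alpha_j$, and $\|A^{-1}\|$.
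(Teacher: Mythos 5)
First, a point of reference: the paper does not actually prove Theorem \ref{thm:estimate.asd} --- it is imported from \cite{baffet2022error} (``which was proved in \cite{baffet2022error}''), so there is no in-paper argument to compare against. Your overall strategy is nonetheless the natural reconstruction of the route taken in \cite{baffet2021adaptive,baffet2022error}: quasi-optimality of the $L^2$-projection, a competitor built from the near-constancy of $\varphi_1,\dots,\varphi_K$ on the components of $D_h$ (Theorem \ref{thm:estimate.eigenfunctions} plus Poincar\'e/Friedrichs with $h$-uniform constants), inversion of the matrix $A$ of component means, and $|M_h|=O(h)$ to dispose of the tube. That skeleton is correct, as is your use of the $O(1)$ bound on $\lambda_1,\dots,\lambda_K$.

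Two of your load-bearing steps are, however, genuine gaps rather than routine details. (i) The uniform bound $\|\varphi_k\|_{L^\infty(\Omega)}\le C$ that you ``grant'' cannot be waved through: the coefficient $\mu_\varepsilon[u_h]$ jumps from $\sim\varepsilon^{-1}$ on $D_h$ to $\sim h$ on $M_h$, so De Giorgi--Nash--Moser type $L^\infty$ estimates carry constants depending on the ellipticity contrast $\sim(\varepsilon h)^{-1}$ and are useless here; no such uniform bound is available off the shelf. The repair is to avoid $L^\infty$ altogether and bound $\|\varphi_k\|_{L^2(M_h)}\le C\sqrt{h}$ directly via an anisotropic thin-tube inequality of the form
\begin{align*}
	\|v\|_{L^2(M_h)}^2\;\le\; C\bigl(h\,\|v\|_{H^1(D_h)}^2+h^2\|\nabla v\|_{L^2(M_h)}^2\bigr),
\end{align*}
combined with $\|\nabla\varphi_k\|_{L^2(M_h)}^2\le C h^{-1}\int_\Omega\mu_\varepsilon[u_h]|\nabla\varphi_k|^2=Ch^{-1}\lambda_k=O(h^{-1})$; this yields $\|w\|_{L^2(M_h)}\le C\|\beta\|_{\ell^2}\sqrt{h}$ without any pointwise control. (ii) Your contradiction argument for $\|A^{-1}\|\le C$ rests on the unquantified assertion that unit $L^2$-mass confined to a tube of width $h$ forces an $L^2$-gradient of order $h^{-3/2}$; as stated this is a heuristic, and making it rigorous requires exactly the thin-tube inequality of (i). A cleaner alternative is to exploit the $L^2$-orthonormality of the eigenfunctions together with their near-constancy: writing $W=\mathrm{diag}(|D_h^1|,\dots,|D_h^K|)$ one obtains $\delta_{kl}=\sum_{j}a_{kj}a_{lj}|D_h^j|+O(\sqrt{\varepsilon}+h)$, i.e.\ $AWA^{\top}=I+O(\sqrt{\varepsilon}+h)$, which gives uniform invertibility of $A$ at once for $h,\varepsilon$ small. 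With these two repairs your construction does deliver the claimed $O(\sqrt{h+\varepsilon})$ bound.
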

In summary, the AS decomposition $\Pi_K[u_h]u$ of $u$ approximates $u$ arbitrarily well, when $u$ is piecewise constant and consists of $K$ inclusions.
Note that more general versions of Theorem \ref{thm:estimate.eigenfunctions} and Theorem \ref{thm:estimate.asd} were proved in \cite[Theorem 5]{baffet2021adaptive} and \cite[Theorem 3.6]{baffet2022error}, which also apply to piecewise constant functions with nonzero boundary values.

Finally, we remark that the operator $L_\varepsilon[u]$ corresponds to the Fréchet derivative of the regularized TV-functional, $\operatorname{TV}_\varepsilon(u) = \int_\Omega\sqrt{|\nabla u|^2 + \varepsilon^2}$, which reduces to the standard TV-functional $\operatorname{TV}(u) = \int_\Omega |\nabla u|$ for $\varepsilon = 0$ -- see \cite[Remark 1]{grote2017adaptive}.
In fact, for $v \in H_0^1$, $v \approx u_h$, the ``energy-norm'' associated to the operator $L_\varepsilon[u_h]$ essentially corresponds to
the TV-``energy'',
\begin{align}\label{eq:approx.tv.functional}
	(L_\varepsilon[u_h]v,v)_{L^2(\Omega)} = 
	\int_\Omega \mu_\varepsilon[u_h]|\nabla v|^2
		= \int_\Omega \frac{|\nabla v|^2}{\sqrt{|\nabla u_h|^2 + \varepsilon^2}}
		\approx \int_\Omega |\nabla v|
		= \operatorname{TV}(v).
\end{align}
The AS decomposition also bears a remarkable resemblance to the spectral decomposition of the nonlinear $\operatorname{TV}$-functional \cite{burger2016spectral, gilboa2016nonlinear}.
\section{Adaptive Spectral Inversion}\label{sec:asi}
We shall now combine the simple Adaptive Inversion iteration from Section \ref{sec:giaa} with the AS decomposition \eqref{eq:as.decomposition} from Section \ref{sec:asd}.
Hence, at each iteration, we shall determine the new search space $\Psi^{(m+1)}$ from the current minimizer $u^{(m),\delta} \in \Psi^{(m)}$ using the eigenfunctions $\varphi_k$ of the elliptic operator $L_\varepsilon[u^{(m),\delta}]$ in \eqref{eq:as.eigenfunctions}.
Here, we recall from Theorems \ref{thm:estimate.eigenfunctions} and \ref{thm:estimate.asd} in Section \ref{sec:asd} that a piecewise constant medium $u$ with $K$ inclusions is approximated with high accuracy by the first $K$ eigenfunctions of $L_\varepsilon[u_h]$ -- 
see also \cite{baffet2022error, baffet2021adaptive} for further details.
 
First, we merge the current search space $\Psi^{(m)}$ with those eigenfunctions and reduce its dimension, while ensuring that $u^{(m),\delta}$ is still well represented in the merged space.
This first step corresponds to the algorithm previously used in \cite{baffet2021adaptive} -- See Remark \ref{rem:asi0} below for further details.

Next, to incorporate the new angle condition \eqref{eq:angle.condition} required for Theorem \ref{thm:convergence.gradient}, we shall include in $\Psi^{(m+1)}$ the most \emph{sensitive} eigenfunctions $\varphi_k$ reordered according to their \emph{sensitivities}:
\begin{align}\label{eq:sensitivities}
	|\sigma_1|
	\geq |\sigma_2|
	\geq \ldots
	\geq |\sigma_k|
	\geq \ldots \ ,
	\qquad \sigma_k = D\misfit^\delta(u^{(m),\delta}) \varphi_k.
\end{align}
We shall now construct a subspace
$\Phi_{N_\theta} = \Span\{\varphi_1, \ldots, \varphi_{N_\theta}\} \subset L^2(\Omega)$, which contains at least one $d \in \Phi_{N_\theta}$ that satisfies the angle condition \eqref{eq:angle.condition}, for a fixed $0 < \tolAngle < 1$.

\begin{lemma}\label{lem:linfinity.criterion}
Let $N_\infty$ be the largest index such that 
\begin{align}\label{eq:linfinity.criterion}
	|D\misfit^\delta(u^{(m),\delta})\varphi_k| \geq \tolAngle \|D\misfit^\delta(u^{(m),\delta})\|,\qquad k = 1, \ldots, N_\infty,
\end{align}
holds.
Then, $d = \sum_{k=1}^{N_\infty} \sigma_k \varphi_k$ satisfies the angle condition \eqref{eq:angle.condition}.
\end{lemma}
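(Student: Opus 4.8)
The plan is to exploit the very particular choice of coefficients in $d$: because each coefficient of $d$ equals the corresponding sensitivity $\sigma_k = D\misfit^\delta(u^{(m),\delta})\varphi_k$, the action of the derivative on $d$ and the squared norm of $d$ will turn out to be the \emph{same} number, which collapses the angle condition to an elementary scalar inequality. Throughout I would abbreviate $g := D\misfit^\delta(u^{(m),\delta})$ for the (nonzero) gradient functional, so that $\sigma_k = g(\varphi_k)$ and $\|g\|$ denotes its dual norm, and recall that the eigenfunctions $\{\varphi_k\}$ are $L^2$-orthonormal (here $H_1 = L^2(\Omega)$).

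First I would evaluate the left-hand side of \eqref{eq:angle.condition}. By linearity of $g$,
\[
	g(d) = \sum_{k=1}^{N_\infty}\sigma_k\, g(\varphi_k) = \sum_{k=1}^{N_\infty}\sigma_k^2 \geq 0 ,
\]
so that $|g(d)| = \sum_{k=1}^{N_\infty}\sigma_k^2$. Next, using the $L^2$-orthonormality of the $\varphi_k$ to kill all cross terms, I would compute $\|d\|_{H_1}^2 = \sum_{k=1}^{N_\infty}\sigma_k^2$. Comparing the two identities yields the key observation that $|g(d)| = \|d\|_{H_1}^2$.

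With this identity in hand, the angle condition $|g(d)| \geq \tolAngle\,\|d\|_{H_1}\,\|g\|$ reduces, after dividing by $\|d\|_{H_1} > 0$, to the single inequality $\|d\|_{H_1} \geq \tolAngle\,\|g\|$. To close the argument I would bound $\|d\|_{H_1}$ from below by its dominant term: since the sensitivities are reordered as in \eqref{eq:sensitivities} and, by the defining property of $N_\infty$ in \eqref{eq:linfinity.criterion} at the index $k=1$, the largest one satisfies $|\sigma_1| \geq \tolAngle\,\|g\|$, I obtain $\|d\|_{H_1}^2 = \sum_{k=1}^{N_\infty}\sigma_k^2 \geq \sigma_1^2 \geq \tolAngle^2\|g\|^2$, hence $\|d\|_{H_1} \geq \tolAngle\,\|g\|$, as required.

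The computation itself is short, so the only points that will need care are, first, that $g(d)$ and $\|d\|_{H_1}^2$ genuinely coincide — this hinges both on the orthonormality eliminating cross terms and on the coefficients of $d$ being \emph{exactly} the sensitivities — and, second, the tacit hypothesis $N_\infty \geq 1$, i.e.\ that at least one eigenfunction meets \eqref{eq:linfinity.criterion}, so that $d \neq 0$ and the division above is legitimate. Since the iteration proceeds only while $\|g\| \neq 0$, I would simply record that the lemma is invoked precisely in that regime, where a nonempty set of admissible indices is assumed to exist.
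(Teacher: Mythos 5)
Your proof is correct. It shares the paper's key starting point, namely that the special choice of coefficients gives $D\misfit^\delta(u^{(m),\delta})d=\sum_{k=1}^{N_\infty}\sigma_k^2\geq 0$, but the two arguments diverge from there. The paper bounds $\sum_k\sigma_k^2=\sum_k|\sigma_k|\,|\sigma_k|\geq \tolAngle\|D\misfit^\delta(u^{(m),\delta})\|\,\|\sigma\|_{\ell^1}$ using the hypothesis \eqref{eq:linfinity.criterion} on \emph{every} index $k\leq N_\infty$, and then invokes $\|\sigma\|_{\ell^1}\geq\|\sigma\|_{\ell^2}=\|d\|_{L^2}$. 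You instead observe that $|D\misfit^\delta(u^{(m),\delta})d|=\|d\|^2$, so the angle condition collapses to $\|d\|\geq\tolAngle\|D\misfit^\delta(u^{(m),\delta})\|$, which you verify from the single dominant term $|\sigma_1|$. Your route is slightly more economical and actually proves a marginally stronger statement: the conclusion holds for $d=\sum_{k=1}^{N}\sigma_k\varphi_k$ with any $N$ for which at least one index $k\leq N$ satisfies \eqref{eq:linfinity.criterion}, whereas the paper's chain of inequalities uses the criterion at all retained indices (though of course the lemma's hypothesis supplies that anyway). You are also right to flag the tacit assumptions $N_\infty\geq 1$ and $d\neq 0$; the paper addresses the former only in the remark following the lemma, where the case $N_\infty=0$ is handled separately by falling back on the $\ell^2$-criterion.
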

\begin{proof}
Since the eigenfunctions $\varphi_k$ are orthonormal, we have $\|d\|_{L^2(\Omega)} = \|\sigma\|_{\ell^2}$ which yields
\begin{align}
	|D\misfit^\delta(u^{(m),\delta})d| &= \left|\sum_{k=1}^{N_\infty} \sigma_k D\misfit^\delta(u^{(m),\delta})\varphi_k \right|
		= \sum_{k=1}^{N_\infty} |D\misfit^\delta(u^{(m)})\varphi_k|^2 \\
		&\geq \tolAngle \|D\misfit^\delta(u^{(m),\delta})\| \sum_{k=1}^{N_\infty} |D\misfit^\delta(u^{(m),\delta})\varphi_k| \\ 
		&= \tolAngle \|D\misfit^\delta(u^{(m),\delta})\| \|\sigma\|_{\ell^1} 
		\geq \tolAngle \|D\misfit^\delta(u^{(m),\delta})\| \|\sigma\|_{\ell^2} \\
		&= \tolAngle  \|d\|_{L^2(\Omega)} \|D\misfit^\delta(u^{(m),\delta})\|.
\end{align}
Thus, the angle condition \eqref{eq:angle.condition} is satisfied.
\end{proof}
Note that for $\tolAngle <1$ there always exists $N_\infty \geq 1$ sufficiently large such that \eqref{eq:linfinity.criterion} is satisfied.

\begin{lemma}\label{lem:l2.criterion}
Let $N_2$, be the smallest index such that 
\begin{align}\label{eq:l2.criterion}
	\sqrt{\sum_{k=1}^{N_2} |D\misfit^\delta(u^{(m),\delta}) \varphi_k|^2} \geq \tolAngle \|D\misfit^\delta(u^{(m),\delta})\|
\end{align}
holds. Then, $d = \sum_{k=1}^{N_2} \sigma_k \varphi_k$ satisfies the angle condition \eqref{eq:angle.condition}.
\end{lemma}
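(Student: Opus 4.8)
The plan is to follow the pattern of the proof of Lemma \ref{lem:linfinity.criterion}, exploiting the $L^2(\Omega)$-orthonormality of the eigenfunctions $\varphi_k$, but in a more direct way, since the defining inequality \eqref{eq:l2.criterion} already controls exactly the quantity that appears. Writing $g := D\misfit^\delta(u^{(m),\delta})$ for brevity, we have $\sigma_k = g\varphi_k$, and by orthonormality the candidate direction $d = \sum_{k=1}^{N_2}\sigma_k\varphi_k$ satisfies $\|d\|_{L^2(\Omega)} = \|\sigma\|_{\ell^2} = \sqrt{\sum_{k=1}^{N_2}|\sigma_k|^2}$. This is the single structural fact from which everything follows.

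The key observation is that taking the sensitivities themselves as coefficients makes the action of $g$ on $d$ coincide with the square of its norm. By linearity of $g$,
\begin{align*}
	g(d) = \sum_{k=1}^{N_2}\sigma_k\, g\varphi_k = \sum_{k=1}^{N_2}|\sigma_k|^2 = \|d\|_{L^2(\Omega)}^2 .
\end{align*}
In particular $g(d)\geq 0$, so $|g(d)| = \|d\|_{L^2(\Omega)}\cdot\|d\|_{L^2(\Omega)}$, and it only remains to bound one of the two factors from below. That lower bound is precisely what the defining condition for $N_2$ supplies: \eqref{eq:l2.criterion} reads $\|d\|_{L^2(\Omega)} = \sqrt{\sum_{k=1}^{N_2}|\sigma_k|^2} \geq \tolAngle\|g\|$. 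Substituting this into one factor yields $|g(d)| \geq \tolAngle\|g\|\,\|d\|_{L^2(\Omega)}$, which is exactly the angle condition \eqref{eq:angle.condition}.

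Accordingly, there is no genuine obstacle here: in contrast to Lemma \ref{lem:linfinity.criterion}, no Cauchy--Schwarz splitting nor passage from $\|\sigma\|_{\ell^1}$ to $\|\sigma\|_{\ell^2}$ is required, because the $\ell^2$ criterion is tailored so that the estimate closes in a single step. The only points I would treat with a little care are peripheral to the angle condition itself. First, the \emph{minimality} of $N_2$ plays no role in the verification above -- any index for which \eqref{eq:l2.criterion} holds would serve equally well -- so I would remark that minimality is imposed solely to keep the resulting search space $\Phi_{N_2}$ as small as possible, in the spirit of the regularizing low-dimensional subspaces of Section \ref{sec:asd}. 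Second, and analogously to the remark following Lemma \ref{lem:linfinity.criterion}, I would note that such an $N_2$ always exists for $\tolAngle < 1$: since $\{\varphi_k\}_{k\geq1}$ is a complete orthonormal basis of $L^2(\Omega)$, Parseval's identity gives $\sum_{k\geq1}|\sigma_k|^2 = \|g\|^2 > \tolAngle^2\|g\|^2$, so the monotone partial sums necessarily exceed the threshold $\tolAngle^2\|g\|^2$ after finitely many terms.
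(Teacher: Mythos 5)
Your proof is correct and follows essentially the same route as the paper: both exploit the orthonormality identity $\|d\|_{L^2(\Omega)} = \|\sigma\|_{\ell^2}$, compute $D\misfit^\delta(u^{(m),\delta})d = \sum_{k=1}^{N_2}|\sigma_k|^2$, factor this as $\|d\|_{L^2(\Omega)}\sqrt{\sum_{k=1}^{N_2}|\sigma_k|^2}$, and apply \eqref{eq:l2.criterion} to one factor. Your added remarks on the (non)role of minimality and the existence of $N_2$ via Parseval are correct but supplementary.
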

\begin{proof}
Similarly, since the eigenfunctions $\varphi_k$ are orthonormal, we have
\begin{align}
	|D\misfit^\delta(u^{(m),\delta})d| &= \sum_{k=1}^{N_2} |D\misfit^\delta(u^{(m),\delta})\varphi_k|^2 \\
		&= \|d\|_{L_2(\Omega)}
			\sqrt{\sum_{k=1}^{N_2} |D\misfit^\delta(u^{(m),\delta})\varphi_k|^2} \\
		&\geq \tolAngle \|d\|_{L^2(\Omega)} \|D\misfit^\delta(u^{(m),\delta})\|,
\end{align}
because of \eqref{eq:l2.criterion}.
Hence, the angle condition \eqref{eq:angle.condition} is satisfied.
\end{proof}
We refer to \eqref{eq:linfinity.criterion} as the $\ell^\infty$-criterion and to \eqref{eq:l2.criterion} as the $\ell^2$-criterion.
If we let 
\begin{align}
\label{eq:NthetaI}
	N_\theta = \max\{N_\infty,N_2\},
\end{align}
we ensure that there exists at least one element in the subspace $\Phi_{N_\theta}$ that satisfies the newly introduced angle condition \eqref{eq:angle.condition}.
Clearly, \eqref{eq:linfinity.criterion} is more stringent than \eqref{eq:l2.criterion} and thus $N_2 \leq N_\infty$.
As we only compute  in practice a finite number of eigenfunctions $\varphi_k$, $1 \leq k \leq N$, ordered according to their eigenvalues, none might in fact satisfy \eqref{eq:linfinity.criterion}. Then we set $N_\infty = 0$ and thereby $N_\theta = N_2$ in \eqref{eq:NthetaI}.
In the unlikely case that no $N_2 \leq N$ satisfies \eqref{eq:l2.criterion}, one needs to increase $N$ until \eqref{eq:l2.criterion} holds to ensure the existence of $d \in \Phi_{N_\theta}$ which satisfies  \eqref{eq:angle.condition}.
This yields the full Adaptive Spectral Inversion (ASI) Algorithm below.
\begin{algorithm}[H]
\caption{Adaptive Spectral Inversion}
\label{algo:ASI}
\algsetup{indent=2em,linenosize=\tiny,linenodelimiter=.}
\begin{algorithmic}[1]
	\REQUIRE ~
		\\ initial guess $u^{(0),\delta}$, search space $\Psi^{(1)}$
		\\ set  $m = 1$, $\tau_0 \geq 1$
	\ENSURE reconstruction $u^{(m_\ast),\delta}$
	\WHILE{$\| D\misfit^\delta(u^{(m),\delta})\| \neq 0$} \label{algo:stop.criterion}
		\STATE \emph{Minimize} $\misfit^\delta$ in the current search space $\Psi^{(m)}$, $\dim \Psi^{(m)} = K_{m}$:
				\begin{align}
					\label{eq:minimizer.subproblem}
					u^{(m),\delta} = \argmin_{u \in \Psi^{(m)}} \misfit^\delta(u).
				\end{align}
				\\[1ex]
		\STATE \emph{Compute} $\tau_{m} = \frac{1}{\delta}\|F(u^{(m),\delta}) - y^\delta\|$.
				\\[1ex]
		\IF{$\tau_{m} \leq \tau_0$}
			\STATE Discrepancy principle \eqref{eq:discrepancy.principle} is satisfied,
				return $u^{(m_\ast),\delta} = u^{(m-1),\delta}$.
				\\[1ex]
		\ENDIF
		\STATE\label{algo:determine}
				\textit{Determine} AS basis $\Phi^{(m+1)}$ from $L_\varepsilon[u^{(m),\delta}]$ via \eqref{eq:as.eigenfunctions}.
				\\[1ex]
		\STATE\label{algo:merge}
				\textit{Merge} AS basis and current search space: $\hat{\Psi}^{(m+1)} = \Phi^{(m+1)} \cup \Psi^{(m)}$.
				\\[1ex]
		\STATE\label{algo:truncate}
				\textit{Truncate} $\hat{\Psi}^{(m+1)}$ while maintaining the accuracy of $u^{(m),\delta}$.
				This yields $\tilde{\Psi}^{(m+1)}$.
				\\[1ex]
		\STATE\label{algo:add.sensitivities}
				\textit{Add sensitivities.}
				Include AS basis functions $\varphi_k \in \tilde{\Phi}^{(m+1)} \subset \Phi^{(m+1)}$ with maximal $|\sigma_k|$ in \eqref{eq:sensitivities}.
				This yields the new search space $\Psi^{(m+1)} = \tilde{\Psi}^{(m+1)} \cup \tilde{\Phi}^{(m+1)}$ with $\dim \Psi^{(m+1)} = K_{m+1}$.
				\\[1ex]
		\STATE $m \gets m + 1$
	\ENDWHILE
\end{algorithmic}
\end{algorithm}
By combining the previous search space $\Psi^{(m)}$ with promising new search directions determined from the current iterate $u^{(m),\delta}$, Steps \ref{algo:determine} -- \ref{algo:truncate} construct a low-dimensional subspace $\tilde{\Psi}^{(m+1)}$ able to represent $u^{(m),\delta}$ up to a small error.
In Step \ref{algo:add.sensitivities}, we then add the basis functions which will probably contribute most to the reduction of the misfit.
Below, we discuss in detail Steps \ref{algo:determine} -- \ref{algo:add.sensitivities} of the ASI Algorithm \ref{algo:ASI}.
\paragraph{Step \ref{algo:determine}: Determine $\Phi^{(m+1)}$.}
Once the minimizer $u^{(m),\delta}$ of $\misfit^\delta$ in the current search space $\Psi^{(m)}$ of dimension $\dim \Psi^{(m)} = K_{m}$ has been found, we compute the first few eigenfunctions of the elliptic operator $L_\varepsilon[u^{(m),\delta}]$, that is, we numerically solve the (linear, symmetric and positive definite) eigenvalue problem \eqref{eq:as.eigenfunctions}, which yields the new AS basis $\Phi^{(m+1)} = \Span\left\{ \varphi_1, \ldots, \varphi_{K_{m}}\right\}$.
Here we arbitrarily set the dimension of $\Phi^{(m+1)}$ to that of $\Psi^{(m)}$, though any other sufficiently large number of eigenfunctions could be chosen.
\paragraph{Step \ref{algo:merge}: Merge.}
Next we merge the previous search space $\Psi^{(m)}$ with $\Phi^{(m+1)}$ as $\hat{\Psi}^{(m+1)} = \Psi^{(m)} \cup \Phi^{(m+1)}$ and compute an $L^2$-orthonormal basis $\hat{\psi}_k$, $k = 1, \ldots, \hat K_{m+1}$, of $\hat{\Psi}^{(m+1)}$ via modified Gram-Schmidt.
Since the dimension $\hat{K}_{m+1}$ of the merged subspace $\hat{\Psi}^{(m+1)}$ may be as large as $2 K_{m}$, we now need to truncate $\hat{\Psi}^{(m+1)}$ to keep the number of control variables small.
\paragraph{Step \ref{algo:truncate}: Truncate.}
To truncate $\hat{\Psi}^{(m+1)}$ and retain only those basis functions $\hat{\psi}_k$ essential for representing $u^{(m),\delta}$, we proceed in two steps.
First, we compute an indicator $v$ close to $u^{(m),\delta}$, but with minimal $\operatorname{TV}$-``energy'', to remove noise and preserve edges.
In doing so, we keep the computational costs low by minimizing the linearized $\operatorname{TV}$-functional \eqref{eq:approx.tv.functional}.
Thus, the indicator $v$ satisfies
\begin{align}\label{eq:indicator.tvnorm}
\begin{split}
	\min_{v \in \hat{\Psi}^{(m+1)}}\quad &\int_\Omega \mu_\varepsilon[u^{(m),\delta}] |\nabla v|^2 \\
	\text{s.t.} \quad & \|v - u^{(m),\delta}\|_{L^2(\Omega)} \leq \tolTrunc \|u^{(m),\delta}\|_{L^2(\Omega)},
\end{split}
\end{align}
for a prescribed truncation tolerance $\tolTrunc > 0$; typically, $\tolTrunc = 5\%$.
Since \eqref{eq:indicator.tvnorm} is a quadratic optimization problem with quadratic inequality constraints, computing $v$ is cheap.

Second, we reduce the dimension of $\hat{\Psi}^{(m+1)}$ by discarding all basis functions that do not contribute much to the indicator $v$.
Let $\gamma_k$ denote the Fourier coefficients $\gamma_k$ of $v$,
\begin{align}
	\gamma_k = (v,\hat{\psi}_k)_{L^2(\Omega)}, \qquad k = 1, \ldots, \hat{K}_{m+1},
\end{align}
sorted in decreasing order and sort the $L^2$-orthonormal basis functions $\hat{\psi}^{(m+1)}_k$ accordingly.
Next, we determine the index
\begin{align}
	N_0 = \min\left\{1 \leq K \leq \hat{K}_{m+1}:
	\; \sum_{k=K+1}^{\hat{K}_{m+1}} \gamma_k^2 \leq \tolTrunc^2 \|\gamma\|^2_{\ell^2} \right\},
	\label{eq:truncation.tolerance.truncate}
\end{align}
such that the relative $L^2$-error in the Fourier expansion truncated at $N_0$ is below $\tolTrunc$.
To avoid drastic changes in the dimension of the search space, we now calculate $\rho = N_0 / K_m$.
For prescribed $\rho_0,\rho_1$ such that $0 < \rho_0 \leq 1 \leq \rho_1$, typically $\rho_0 = 0.8$ and $\rho_1 = 1.2$, we choose the dimension $\tilde{K}_{m+1}$ of the truncated space $\tilde{\Psi}^{(m+1)}$ as follows:
\begin{enumerate}[(i)]
	\item If $\rho \in [\rho_0,\rho_1]$, set $\tilde{K}_{m+1} = N_0$.
	\item If $\rho < \rho_0$, the dimension decreases too fast. Set $\tilde{K}_{m+1} = \left\lceil \rho_0 K_{m} \right\rceil$ and halve  $\tolTrunc$ to avoid a rapid decrease in the dimension at the next iteration.
	\item\label{item:growth} If $\rho > \rho_1$, the dimension increases too fast. Set $\tilde{K}_{m+1} = \left\lceil \rho_1 K_{m} \right\rceil$ and double $\tolTrunc$ to avoid a rapid increase in the dimension at the next iteration.
\end{enumerate}
This yields the truncated space as $\tilde{\Psi}^{(m+1)} = \Span\left\{ \hat{\psi}_1, \ldots, \hat{\psi}_{\tilde{K}_{m+1}} \right\}$.
\paragraph{Step \ref{algo:add.sensitivities}: Add sensitivities.} 
Finally, we determine the most sensitive AS basis functions $\tilde{\Phi}^{(m+1)} = \Span\left\{\varphi_1, \ldots, \varphi_{N_{\theta}}\right\}$ from $\Phi^{(m+1)}$, reorderd according to their sensitivities as in 
\eqref{eq:sensitivities}. In doing so, we use
the $\ell^\infty$- and $\ell^2$-criteria \eqref{eq:linfinity.criterion} and \eqref{eq:l2.criterion} from Lemmas \ref{lem:linfinity.criterion} and \ref{lem:l2.criterion}, respectively, to determine $N_\theta$.
In the unlikely event that the AS space $\Phi^{(m+1)}$ from Step \ref{algo:determine} contains no eigenfunction that satisfies \eqref{eq:linfinity.criterion} or \eqref{eq:l2.criterion}, we can either increase the dimension of $\Phi^{(m+1)}$, or set $\tilde{\Phi}^{(m+1)} = \emptyset$ (and simply proceed).
By combining $\tilde{\Psi}^{(m+1)}$ and $\tilde{\Phi}^{(m+1)}$, we eventually obtain the subsequent search space
\begin{align}
	\Psi^{(m+1)} = \tilde{\Psi}^{(m+1)} \cup \tilde{\Phi}^{(m+1)} 
		= \Span\left\{ \psi_1, \ldots, \psi_{K_{m+1}}\right\}.
\end{align}

Note that the sensitivity based selection procedure in Step \ref{algo:add.sensitivities} 
 only ensures that the angle condition \eqref{eq:angle.condition} is satisfied by at least
 one element $d$ in $\tilde{\Phi}^{(m+1)} \subset \Psi^{(m+1)}$, but not necessarily
 by the defect 
 $d^{(m),\delta} = u^{(m),\delta} - u^{(m-1),\delta}$ itself. 
The latter, stronger condition would require verifying \eqref{eq:angle.condition} at every iteration, and possibly rejecting
the new minimizer of \eqref{eq:minimizer.subproblem} when not satisfied, while further increasing the
search space. Instead,  Lemmas \ref{lem:linfinity.criterion} and \ref{lem:l2.criterion} guarantee
that at least one element in the search space satisfies the angle condition, thereby 
making it rather likely that it will also be satisfied by the correction $d^{(m),\delta}$.

\begin{remark}\label{rem:asi0}
In the above Adaptive Spectral Inversion (ASI) Algorithm, Steps \ref{algo:stop.criterion} -- \ref{algo:truncate} correspond to the ASI Algorithm previously introduced in \cite{baffet2021adaptive}, yet with the added growth
control on $K_m$ in Step \ref{algo:truncate} (\ref{item:growth}). Step \ref{algo:add.sensitivities}, however, where further eigenfunctions are included based on their sensitivites \eqref{eq:sensitivities}, is new and will prove crucial for detecting small-scale features in the medium. Henceforth we denote by $\text{ASI}_{0}$ the above ASI Algorithm without Step \ref{algo:add.sensitivities}, similar to that from \cite{baffet2021adaptive}, to distinguish it from the present ASI Algorithm.  
\end{remark}
\section{Numerical Results}\label{sec:numerical.results}
To illustrate the accuracy and usefulness of the ASI Algorithm from Section \ref{sec:asi}, we shall now apply it to two inverse medium problems of the form:
Find $u \in H_1, y \in H_2$ that satisfy
\begin{align}
	\min_{u,y}\quad  &\frac{1}{2} \|y - y^\delta\|_{H_2}^2 \label{eq:num.full.misfit}\\
	\text{s.t.}\quad &A[u] y - f = 0,	\label{eq:num.constraint}
\end{align}
for a given source $f$ and noisy data $y^\delta \in H_2$ as in \eqref{eq:noise}.
Each inverse problem is governed by a distinct \emph{forward problem} \eqref{eq:num.constraint}
whose solution, for any given medium\footnote{We refer to a function $u:\IR^2 \to \IR$ as a medium.} $u$, is $y = A[u]^{-1}f$. Thus, we can eliminate the constraint \eqref{eq:num.constraint}, which leads to the equivalent unconstrained minimization problem:
Find $u^{\dagger,\delta} \in H_1$ such that
\begin{align}\label{eq:num.min.problem}
	u^{\dagger,\delta} = \argmin_{u \in H_1} \frac{1}{2}\|y[u] - y^\delta\|_{H_2}^2.
\end{align}

For each inverse problem, we shall attempt to recover separately the two different (unknown) media shown in Figure \ref{fig:InverseProblems.uDagger}.
The first consists of six discs, each with a different value and radius.
The second consists of three inclusions:
an open wedge with a sharp $90^\circ$ interior angle, a convex drop-like inclusion with a sharp tip, and a kite-shaped inclusion, which is non-convex with a smooth boundary.
Since the unknown medium $u^\dagger$ must be strictly bounded away from zero, we write $u^\dagger = 1 + u^\dagger_0$ for $u^\dagger_0$ with zero boundary.
Then, we apply the ASI Algorithm \ref{algo:ASI} to minimize the misfit
\begin{align}
	\min_{u} \misfit^\delta(1+u)
\end{align}
with $u$ equal to zero at the boundary.
\begin{figure}[ht!]
\begin{subfigure}{0.49\textwidth}
	\centering
		\includegraphics[width=0.8\textwidth]{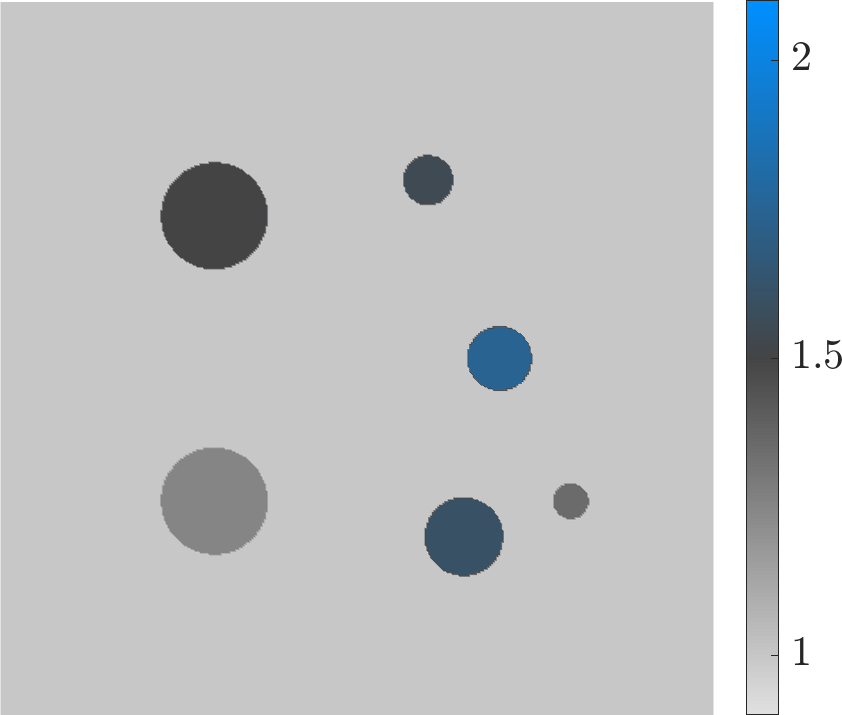}
	\end{subfigure}
	\hfill
	\begin{subfigure}{0.49\textwidth}
	\centering
		\includegraphics[width=0.8\textwidth]{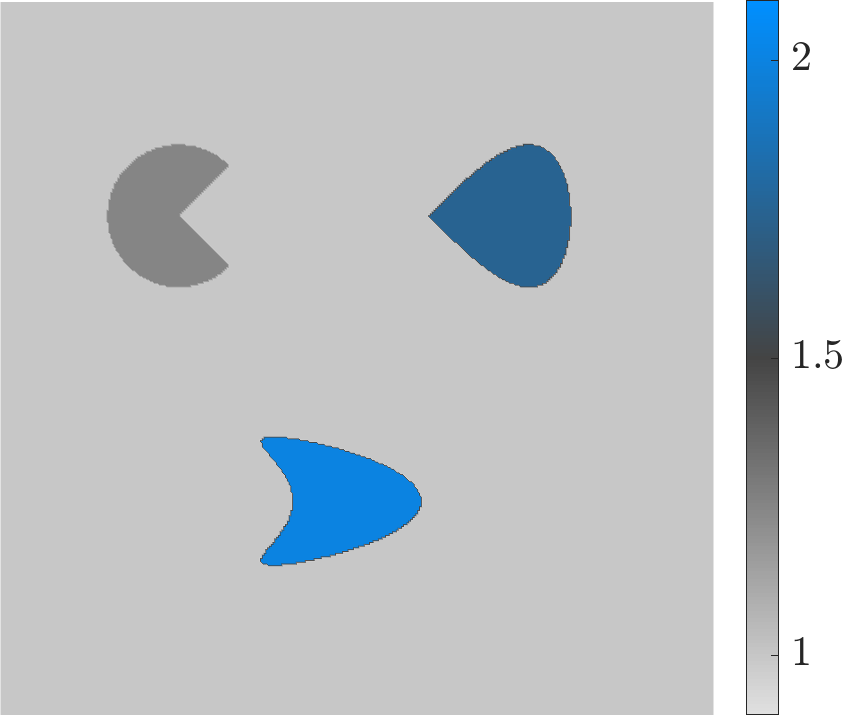}
	\end{subfigure}
	\caption{Inverse Problem: The two media $u^\dagger$ used in the numerical experiments.
				Left: six discs;
				right: three inclusions.}
	\label{fig:InverseProblems.uDagger}
\end{figure}

In all cases, we apply the ASI Algorithm \ref{algo:ASI} from Section \ref{sec:asi} with the following fixed parameter settings:
\begin{align}
	\rho_0 = 0.8, \quad \rho_1 = 1.2, \quad \tolAngle = 10^{-4}, \quad \varepsilon_\Psi = 0.05, \quad \tau_0 = 1.
\end{align}
As initial guess, we always choose $u^{(0),\delta} = 1$ constant and set the initial search space $\Psi^{(1)}$ equal to the first $K_1 = 50$ or $K_1 = 100$ $L^2$-orthonormal eigenfunctions of the Laplacian sorted in non-decreasing order w.r.t.\ their eigenvalues.

To assess the accuracy of the ASI method, we shall monitor the following quantities:
The dimension $K_m$ of the search space $\Psi^{(m)}$, the relative error 
\begin{align}\label{eq:rel.error}
	e_{m} = \frac{\|u^{(m),\delta} - u^\dagger\|_{H_1}}{\|u^\dagger\|_{H_1}},
\end{align}
the ratio $\tau_{m}$ from \eqref{eq:discrepancy.principle},
\begin{align}\label{eq:tau.discrepancy.principle}
	\tau_{m} = \frac{\|y[u^{(m),\delta}] - y^\delta\|_{H_2}}{\delta},
\end{align}
and the total number of iterations $m_\ast$, where $m = m_\ast+1$ is the first index such that $\tau_m \leq \tau_0$; hence, the discrepancy principle \eqref{eq:discrepancy.principle} is then satisfied with $\tau = \tau_{m_\ast}$.
\subsection{Elliptic Inverse Problem}\label{sec:IP.elliptic}
First, we consider as forward problem the elliptic differential equation
\begin{align}\label{eq:elliptic.forward.problem}
\begin{aligned}
	- \nabla \cdot (u \nabla y) &= f \qquad &&\text{in}\;\Omega, \\
	y &= 0 \qquad &&\text{on}\; \partial\Omega,
\end{aligned}
\end{align}
in the unit square $\Omega = (0,1)^2$ with constant right-hand side $f = 100$;
hence, the observations $y^\delta$ are available throughout $\Omega$.
In \cite{kaltenbacher2000regularization,kaltenbacher2008iterative,scherzer1993optimal}, the one-dimensional version of \eqref{eq:elliptic.forward.problem} was considered and shown to satisfy the Scherzer condition \eqref{eq:scherzer.condition}.

Here we shall compare the ASI Algorithm \ref{algo:ASI} from Section \ref{sec:asi} with a standard grid-based inversion method using Tikhonov regularization.
We initialize the ASI method with the constant initial guess $u^{(0),\delta} = 1$ and let $\Psi^{(1)}$ equal the first $K_{1} = 100$ Laplace eigenfunctions.
The forward problem \eqref{eq:elliptic.forward.problem} is solved 
in $H_1 = H_2 = V_h(\Omega)$, where $V_h$ corresponds to the subspace of 
continuous, piecewise linear $\mathbb{P}^1$ finite elements (FE) with mesh size $h>0$.
Here, we use 
for both $u$ and $y$ the same 
fixed triangular mesh with vertices located on a $400 \times 400$ equidistant Cartesian grid.
Despite the large number ($160'000$) of dof's in the FE representation, we recall that the ASI Algorithm determines the $m$-th iterate only in the much smaller subspace $\Psi^{(m)}$ of dimension $K_m$.
In the $m$-th step of the ASI Algorithm the minimizer $u^{(m),\delta}$ to \eqref{eq:minimizer.subproblem} is determined using standard BFGS together with Armijo \cite{nocedal2006numerical} instead of Wolfe-Powell line search to keep the number of (expensive) gradient evaluation small.

For the standard Tikhonov $L^2$-regularization approach, we minimize 
\begin{align}\label{ip:elliptic.tikhonov}
	\min_{u \in L^2(\Omega)} \frac{1}{2}\|y[u] - y^\delta\|_{L^2(\Omega)}^2 + \frac{\alpha}{2} \|u\|_{L^2(\Omega)}^2,
\end{align}
where $\alpha > 0$ denotes the regularization parameter.
Again, we discretize $u$ and $y$ with $\mathbb{P}^1$-FE on the same triangular mesh as for the ASI method, yet due to the resulting large number of unknowns, we now solve \eqref{ip:elliptic.tikhonov} using standard \emph{limited memory BFGS} (L-BFGS) \cite{nocedal2006numerical} together with Armijo line search.
Following \cite[Section $11.2$]{engl1996regularization}, we set the regularization parameter $\alpha_n = 2^{-n}$ at the $n$-th L-BFGS iteration.

To avoid any potential inverse crime, the exact data $y^\dagger = y[u^\dagger]$ was computed from a 
$20 \%$ finer mesh and perturbed at each grid point $x_j$ as
 \begin{align}
	y^\delta(x_j) = y^\dagger(x_j) + \hat\delta\, \eta_j, \qquad x_j \in \Omega,
\end{align}
for a \emph{noise level} $\hat\delta \geq 0$.
Here, $\eta_j$ corresponds to Gaussian noise normalized such that
the data misfit with $y^\dagger, y^\delta \in V_h(\Omega)$ satisfies exactly
\begin{align}
	\|y^\dagger - y^\delta\|_{L^2(\Omega)} = \hat\delta = \delta.
\end{align}
\begin{table}[ht!]
	\begin{center}
		\caption{Elliptic inverse problem, six discs:
				the relative error $e_{m_\ast}$, the total number of iterations $m_\ast$, and the dimension $K_{m_\ast}$ of the search space are shown for the ASI method and $L^2$-Tikhonov regularization.}
		\label{tab:elliptic.discs}
		\begin{tabular}{l||cccc|cccc}
			Method & \multicolumn{4}{c|}{ASI} & \multicolumn{4}{c}{$L^2$-regularization} \\\hline\hline
			$\delta = \hat\delta$ & $1\%$ & $2\%$ & $5\%$ & $10\%$ & $1\%$ & $2\%$ & $5\%$ & $10\%$ \\\hline
			$e_{m_\ast}$ & $2.7\%$ & $2.8\%$ & $3.9\%$ & $5.3\%$ & $5.3\%$ & $6.1\%$ & $8.0\%$ & $8.7\%$\\
			$m_\ast$ & $50$ & $19$ & $15$ & $10$ & $5.3$ & $39$ & $15$ & $11$\\
			$K_{m_\ast}$ & $67$ & $95$ & $76$ & $51$ & \multicolumn{4}{c}{$\mathrm{nDof} = 160'000$} \\
			$\tau_{m_\ast}$ & $1.005$ & $1.0001$ & $1.000$ & $1.0003$ & $1.000$ & $1.0008$ & $1.005$ & $1.001$
		\end{tabular}
	\end{center}	
\end{table}
In Table \ref{tab:elliptic.discs} we list the relative error $e_{m_\ast}$ at the final iteration $m_\ast$ for the ASI method and {$L^2$-Tikhonov} regularization together with the dimension $K_{m_\ast}$ of the search space $\Psi^{(m_\ast)}$.
As expected, both methods require fewer iterations $m_\ast$ to achieve \eqref{eq:discrepancy.principle} as the noise level $\delta$ increases, while the relative error obtained by the ASI method is always smaller than that achieved by standard $L^2$-Tikhonov regularization.
Even with $\delta = 10 \%$ the ASI method yields a smaller relative error than standard Tikhonov regularization using smaller $\delta$.
In Figure \ref{fig:elliptic.discs}, we compare the reconstructed media $u^{(m_\ast),\delta}$ for the two methods.
Clearly, the reconstruction obtained by the ASI method displays sharper contrasts and crisper edges while the coefficients inside each disc are more accurate.
Moreover, for $L^2$-Tikhonov regularization, the background appears noisy and the coefficients inside the inclusions are not accurately reconstructed.
Remarkably, the better reconstruction obtained by the ASI method is achieved with as few as $K_{m_\ast} = 100$ control variables only, in contrast to grid-based Tikhonov regularization with more than $160'000$ unknowns.

In Figure \ref{fig:elliptic.discs.data}, we observe that the relative error $e_{m}$ and $\|D\misfit^{\delta}(u^{(m),\delta})\|$ decrease throughout all iterations, as expected from Theorem \ref{thm:convergence.gradient} in Section \ref{sec:giaa}, while $\tau_{m}$ tends to $1$.
Note that the dimension $K_{m}$, or equivalently the number of control variables, remains small for the ASI method regardless of $\delta$, which keeps the computational effort low.
\begin{figure}[ht!]
	\begin{subfigure}{0.32\textwidth}
		\centering
		\includegraphics[width=1\textwidth]{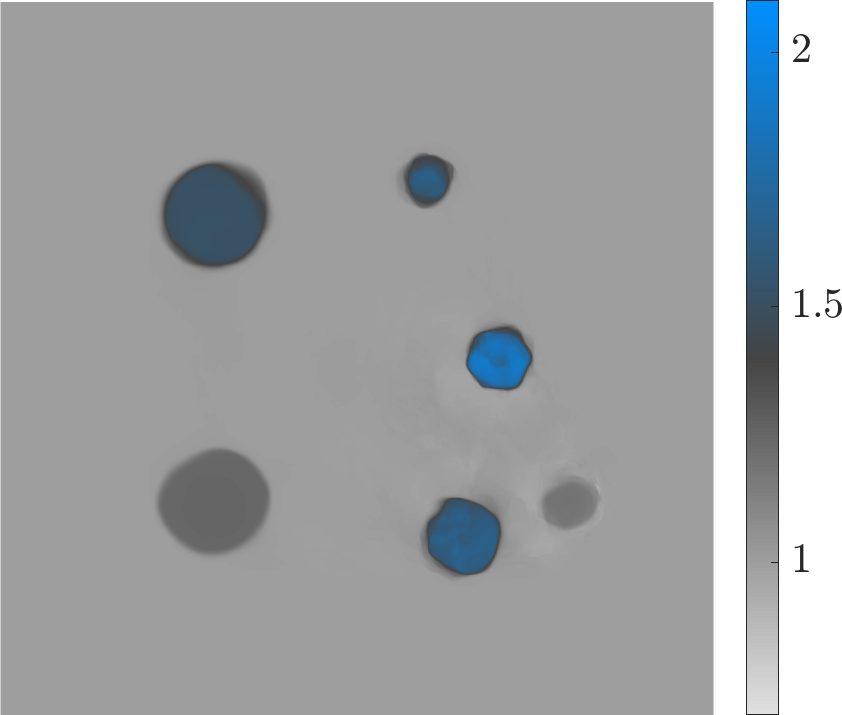}
		\caption{ASI: $u^{(m_\ast),\delta}$ for $\hat\delta = 1\%$}
	\end{subfigure}
	\hfill
	\begin{subfigure}{0.32\textwidth}
		\centering
		\includegraphics[width=1\textwidth]{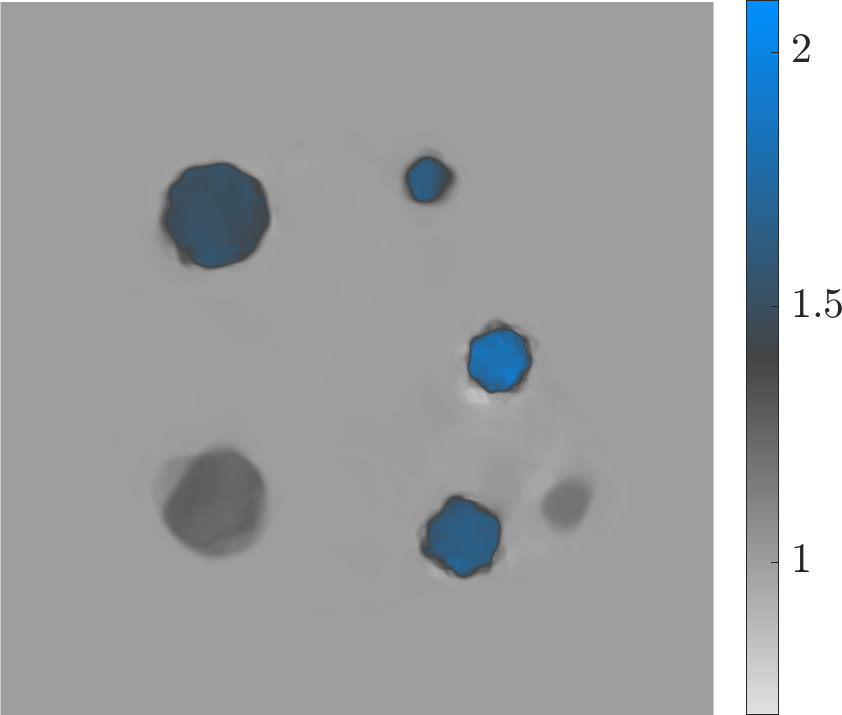}
		\caption{ASI: $u^{(m_\ast),\delta}$ for $\hat\delta = 2\%$}
	\end{subfigure}
	\hfill
	\begin{subfigure}{0.32\textwidth}
		\centering
		\includegraphics[width=1\textwidth]{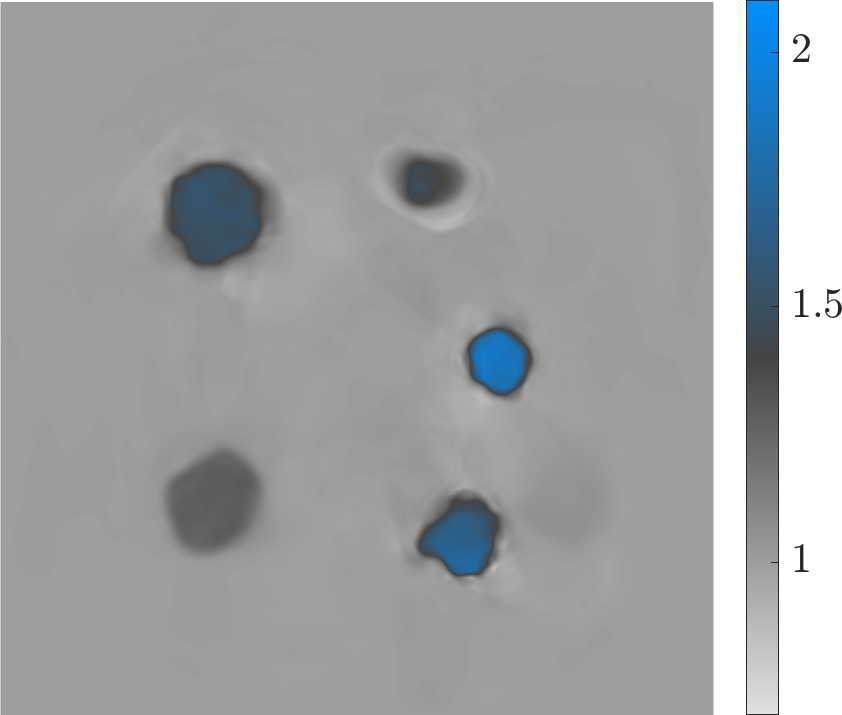}
		\caption{ASI: $u^{(m_\ast),\delta}$ for $\hat\delta = 5\%$}
	\end{subfigure}
	\\[2ex]
	\begin{subfigure}{0.32\textwidth}
		\centering
		\includegraphics[width=1\textwidth]{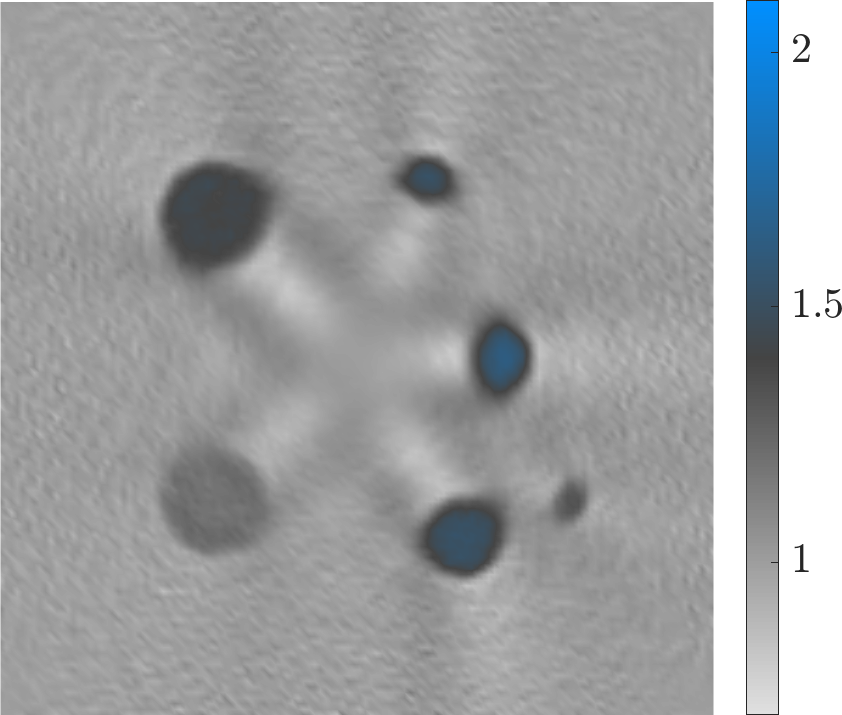}
		\caption{Tikhonov: $u^{(m_\ast),\delta}$ for $\hat\delta = 1\%$}
	\end{subfigure}
	\hfill
	\begin{subfigure}{0.32\textwidth}
		\centering
		\includegraphics[width=1\textwidth]{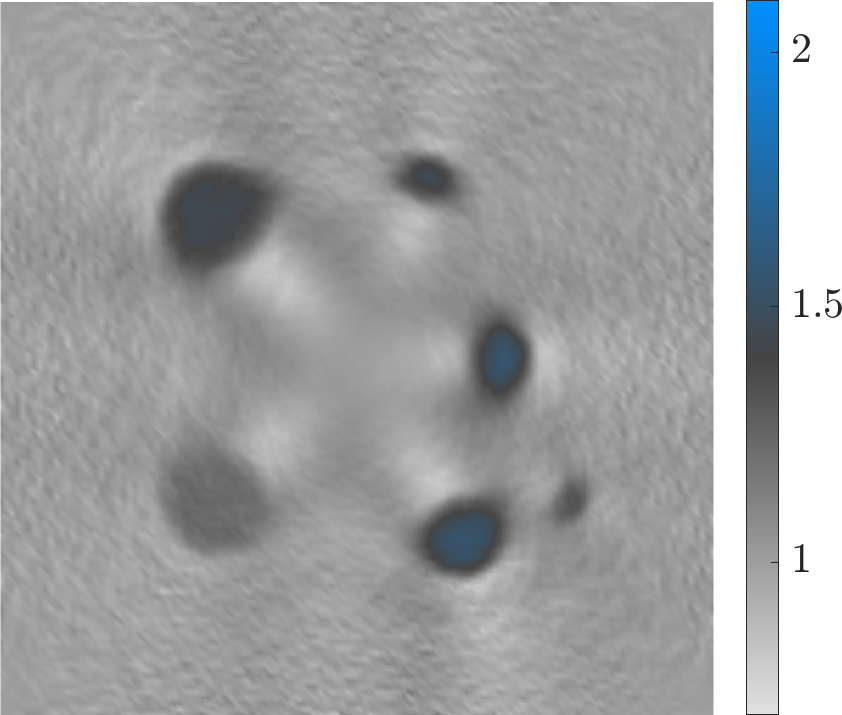}
		\caption{Tikhonov: $u^{(m_\ast),\delta}$ for $\hat\delta = 2\%$}
	\end{subfigure}
	\hfill
	\begin{subfigure}{0.32\textwidth}
		\centering
		\includegraphics[width=1\textwidth]{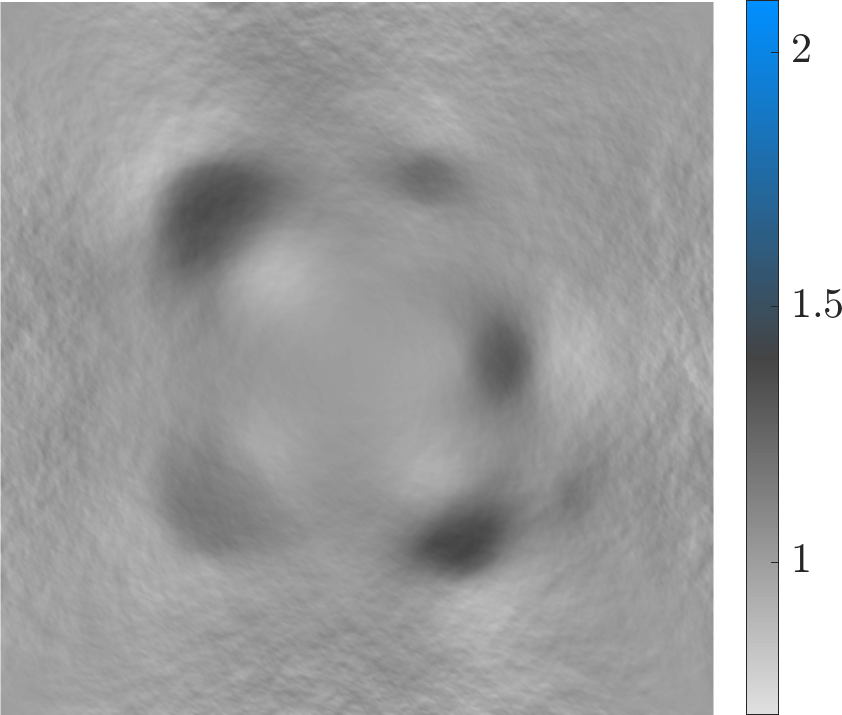}
		\caption{Tikhonov: $u^{(m_\ast),\delta}$ for $\hat\delta = 5\%$}
	\end{subfigure}
	\caption{Elliptic inverse problem, six discs:
				Reconstructed media using the ASI method (top) or standard $L^2$-Tikhonov regularization (bottom) for different noise levels $\hat\delta$.}
	\label{fig:elliptic.discs}
\end{figure}
\begin{figure}[ht!]
	\begin{subfigure}[c]{0.49\textwidth}
		\centering
		\includegraphics[width=0.8\textwidth]{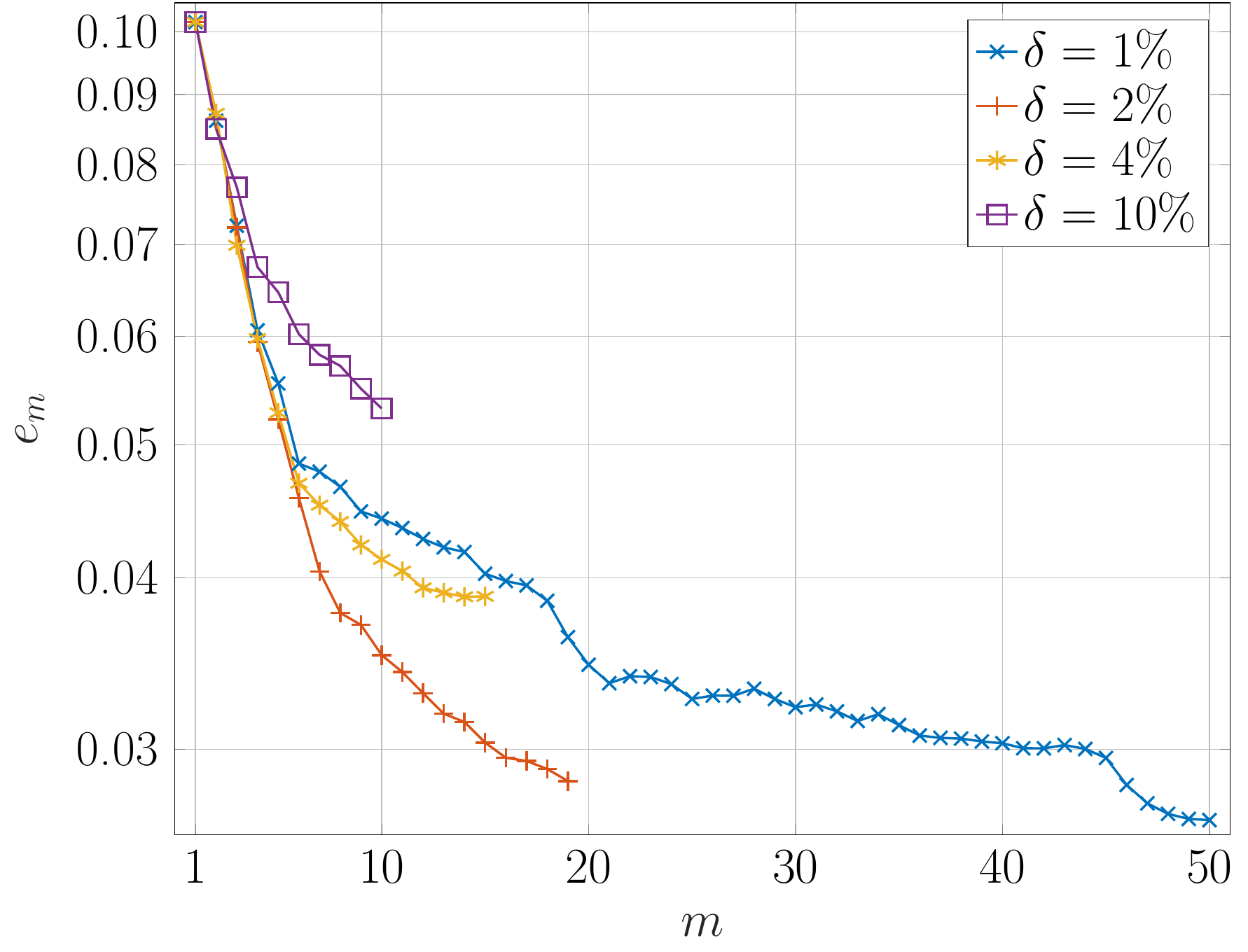}
		\caption{relative $L^2$ error $e_{m}$}
	\end{subfigure}
	\hfill
	\begin{subfigure}[c]{0.49\textwidth}
		\centering
		\includegraphics[width=0.8\textwidth]{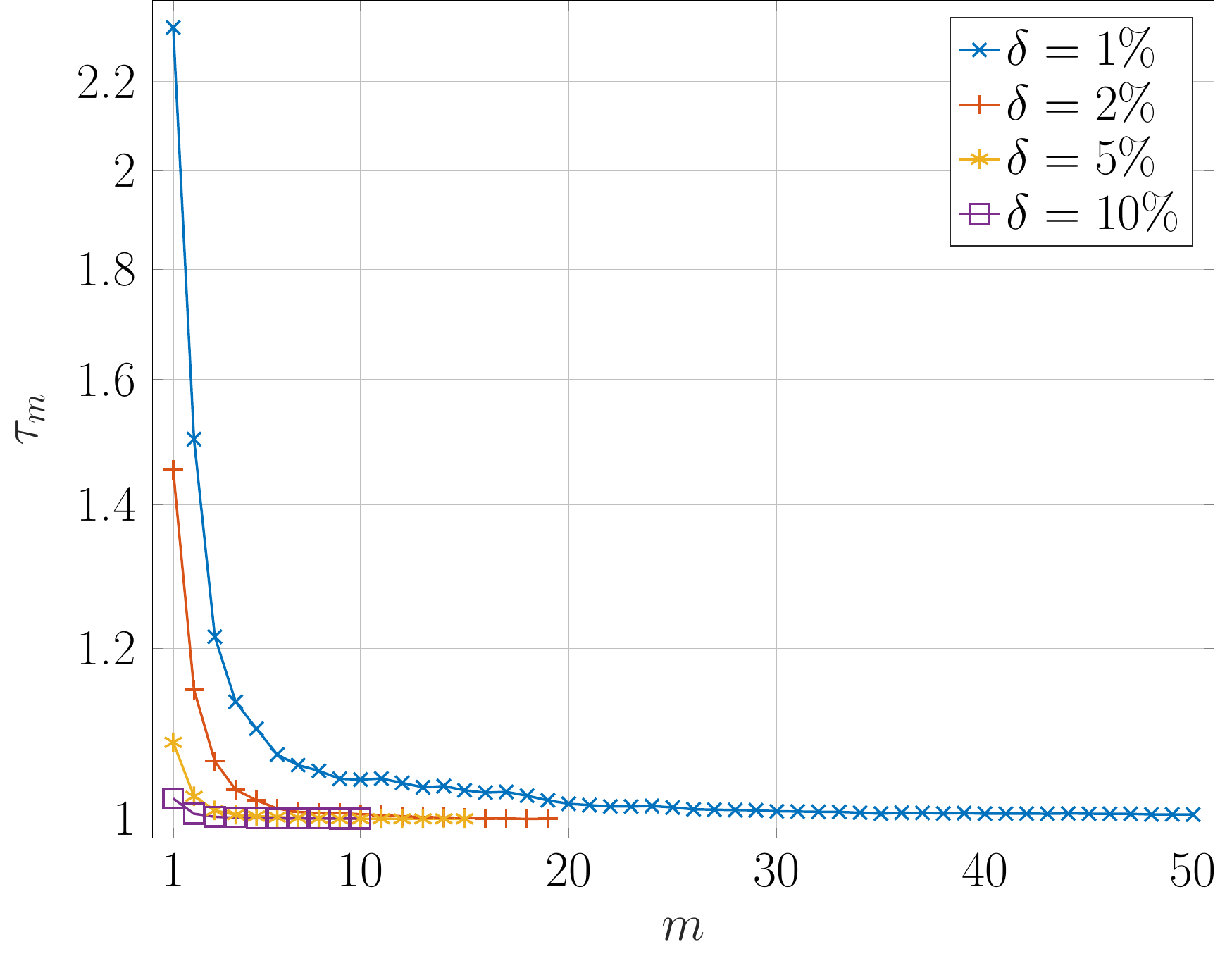}
		\caption{ratio $\tau_{m}$ from the discrepancy principle}
		\label{subfig:elliptic.discs.tau}
	\end{subfigure}
	\\[2ex]
	\begin{subfigure}[c]{0.49\textwidth}
		\centering
		\includegraphics[width=0.8\textwidth]{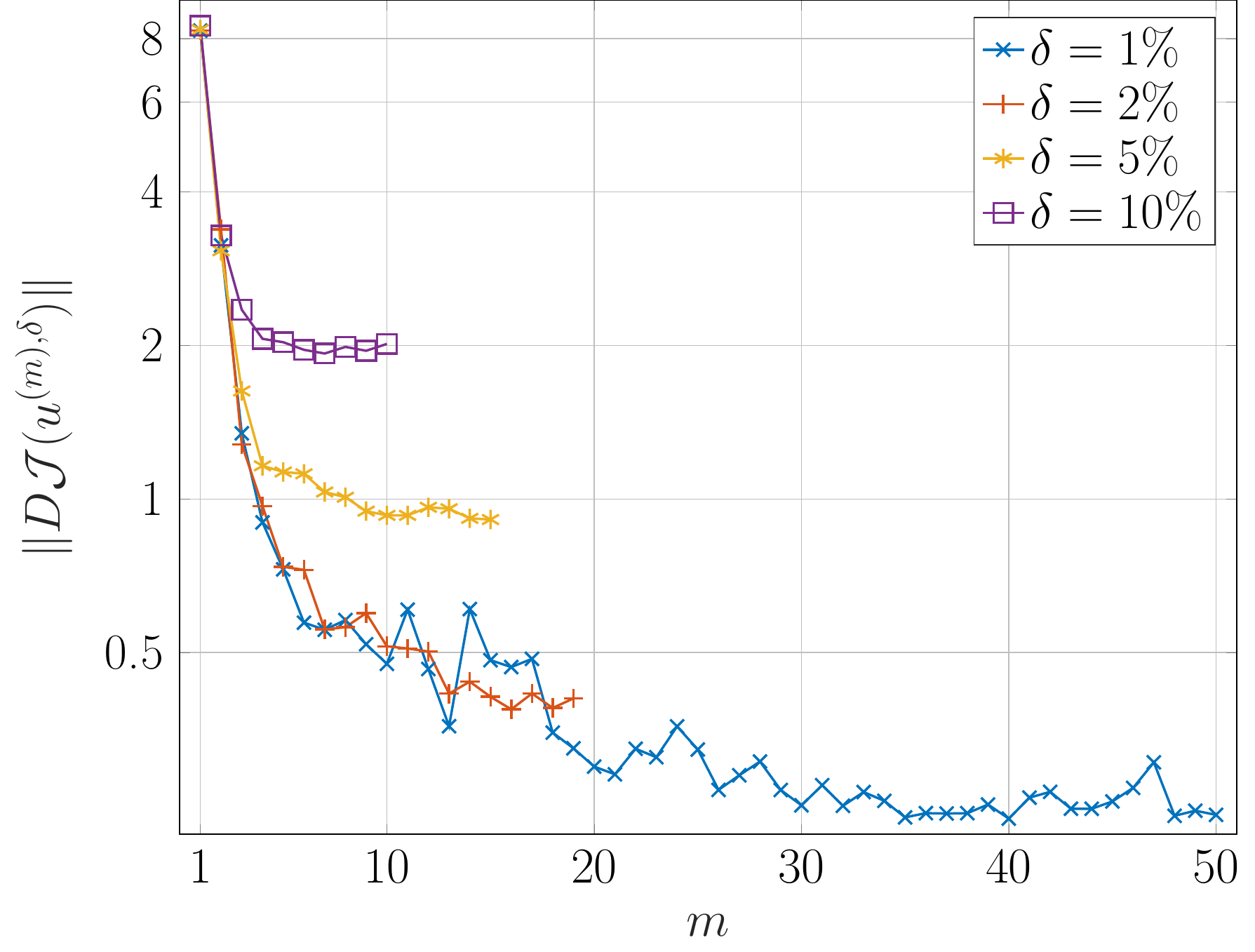}
		\caption{$\|D\misfit(u^{(m),\delta})\|$}
	\end{subfigure}
	\hfill
	\begin{subfigure}[c]{0.49\textwidth}
		\centering
		\includegraphics[width=0.8\textwidth]{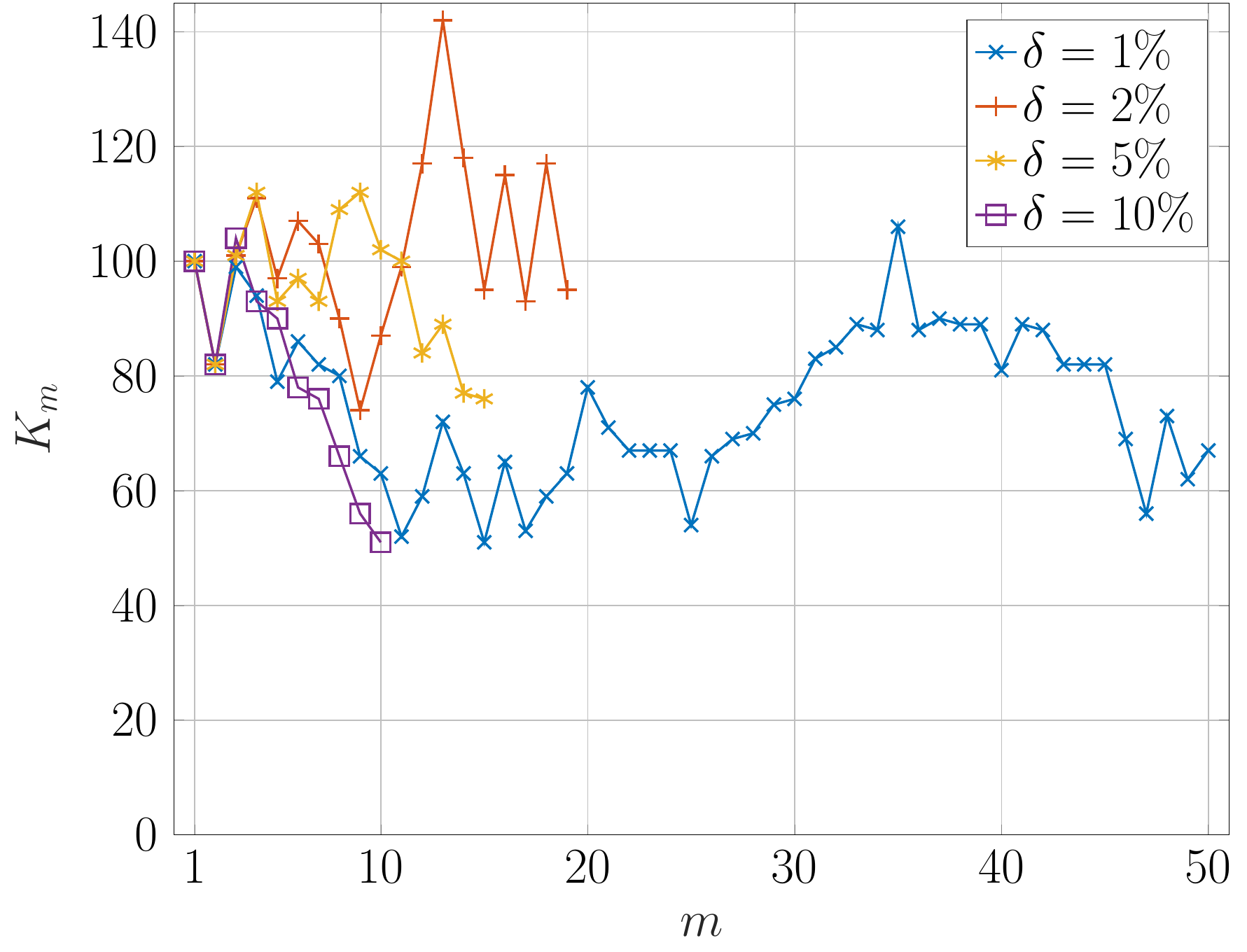}
		\caption{dimension $K_{m}$}
	\end{subfigure}
	\caption{Elliptic inverse problem, six discs:
				The relative error \eqref{eq:rel.error}, the ratio $\tau_{m}$ from the discrepancy principle \eqref{eq:tau.discrepancy.principle}, the norm of the gradient, and the dimension of the search space at iteration $m$ for different noise levels $\delta$.}
	\label{fig:elliptic.discs.data}
\end{figure}

Next, we consider the (unknown) medium $u^\dagger$ shown in the right frame of Figure \ref{fig:InverseProblems.uDagger}, which consists of three distinct inclusions.
Again, the ASI Algorithm from Section \ref{sec:asi} is able to recover the medium at the various noise levels $\delta$, as shown in Figure \ref{fig:elliptic.three.inclusions}:
all three inclusions are clearly visible with good contrast and sharp edges, except for the reentrant corner of the open wedge with $5\%$ noise.
$L^2$-Tikhonov regularization, however, results in more noisy and blurred reconstructions, where the inclusions become hardly visible beyond $5 \%$ noise.

From Table \ref{tab:elliptic.three.inclusions}, we again infer that the relative error $e_{m_\ast}$ of the ASI method alway remains below that obtained with $L^2$-Tikhonov regularization:
Even with $\hat\delta = 10\%$ noise, the ASI method remains more accurate than $L^2$-Tikhonov regularization with as little as $1\%$ noise.
Moreover, the number of control variables $K_{m_\ast}$ used in the ASI method never exceeds $160$, in comparison to approximately $160'000$ control variables used in the nodal FE representation for $L^2$-Tikhonov regularization.
As a consequence, the computational effort of the ASI method always remains well below that with standard Tikhonov regularization.

In Figure \ref{fig:elliptic.three.inclusions.data}, we observe that the relative error $e_{m}$ and $\|D\misfit^{\delta}(u^{(m),\delta})\|$ decrease with each iteration, while the ratio $\tau_{m}$ tends to $1$ and the number of basis functions $K_{m}$ of the search space $\Psi^{(m)}$, i.e.\ the number of control variables, remains small, which again translates to a low computational effort. Note that $\tau_{m} \approx 1$ implies that $u^{(m),\delta}$ (nearly) yields an optimal data misfit because $\|y^\dagger - y^\delta\|_{L_2(\Omega)} \leq \delta$.
\begin{table}[ht!]
	\begin{center}
		\caption{Elliptic inverse problem, three inclusions:
		the relative error $e_{m_\ast}$, the total number of iterations $m_\ast$, and the dimension $K_{m_\ast}$ of the search space are shown for the ASI method and $L^2$-Tikhonov regularization.}
		\label{tab:elliptic.three.inclusions}
		\begin{tabular}{l||cccc|cccc}
			Method & \multicolumn{4}{c|}{ASI} & \multicolumn{4}{c}{$L^2$-regularization} \\\hline\hline
			$\delta = \hat\delta$ & $1\%$ & $2\%$ & $5\%$ & $10\%$ & $1\%$ & $2\%$ & $5\%$ & $10\%$ \\\hline
			$e_{m_\ast}$ & $3.3\%$ & $3.7\%$ & $5.0\%$ & $6.4\%$ & $7\%$ & $8.3\%$ & $12.9\%$ & $13.0\%$ \\
			$m_\ast$ & $50$ & $25$ & $12$ & $8$ & $69$ & $48$ & $15$ & $15$ \\
			$K_{m_\ast}$ & $153$ & $120$ & $110$ & $72$ & \multicolumn{4}{c}{$\mathrm{nDof} = 160'000$} \\
			$\tau_{m_\ast}$ & $1.001$ & $1.0001$ & $1.0001$ & $1.000$ & $1.0004$ & $1.0001$ & $1.029$ & $1.0014$
		\end{tabular}
	\end{center}
\end{table}
\begin{figure}[ht!]
	\begin{subfigure}{0.32\textwidth}
		\centering
		\includegraphics[width=1\textwidth]{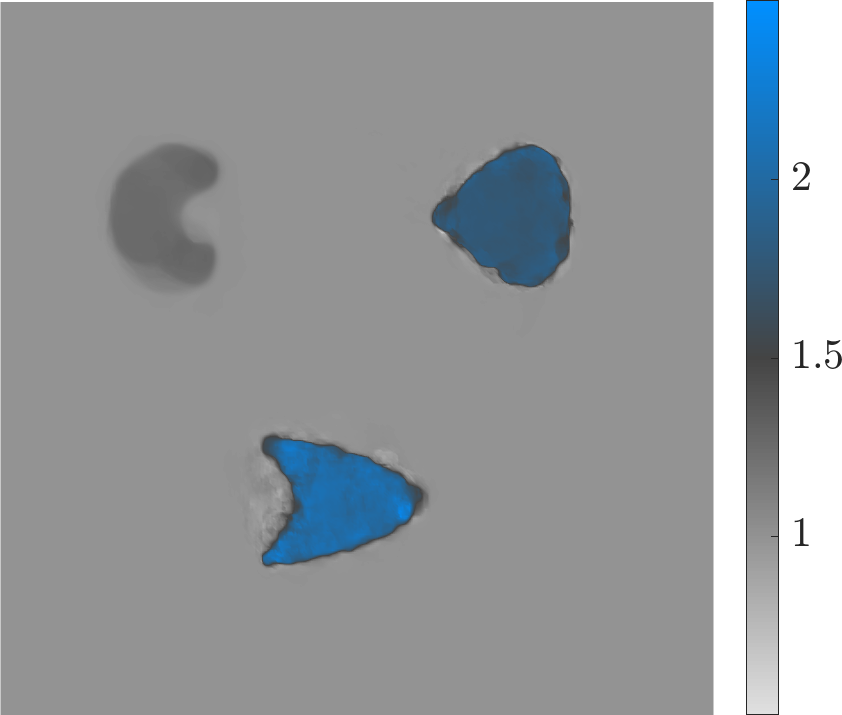}
		\caption{ASI: $u^{(m_\ast),\delta}$ for $\hat\delta = 1\%$}
	\end{subfigure}
	\hfill
	\begin{subfigure}{0.32\textwidth}
		\centering
		\includegraphics[width=1\textwidth]{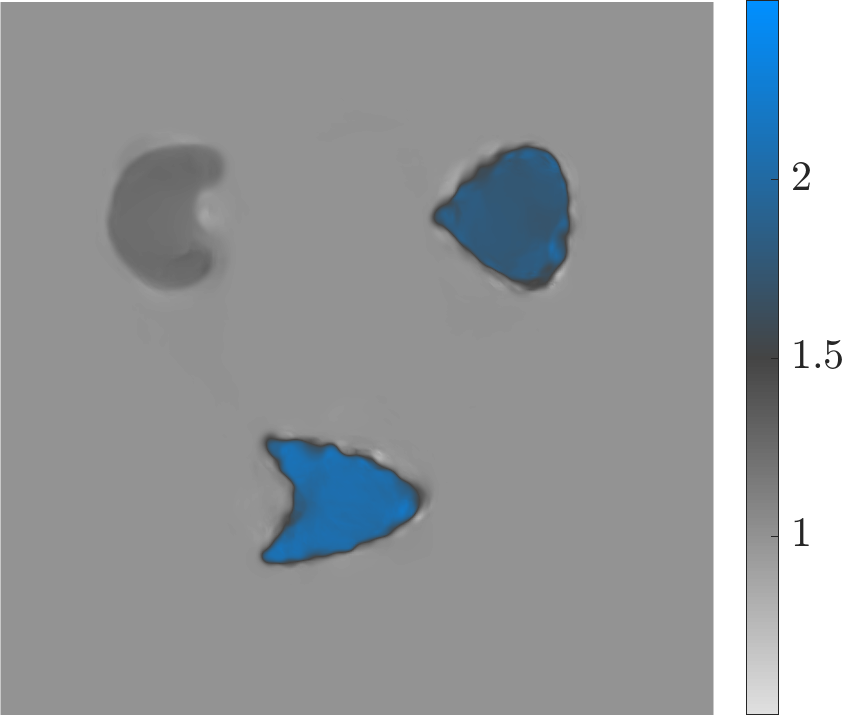}
		\caption{ASI: $u^{(m_\ast),\delta}$ for $\hat\delta = 2\%$}
	\end{subfigure}
	\hfill
	\begin{subfigure}{0.32\textwidth}
		\centering
		\includegraphics[width=1\textwidth]{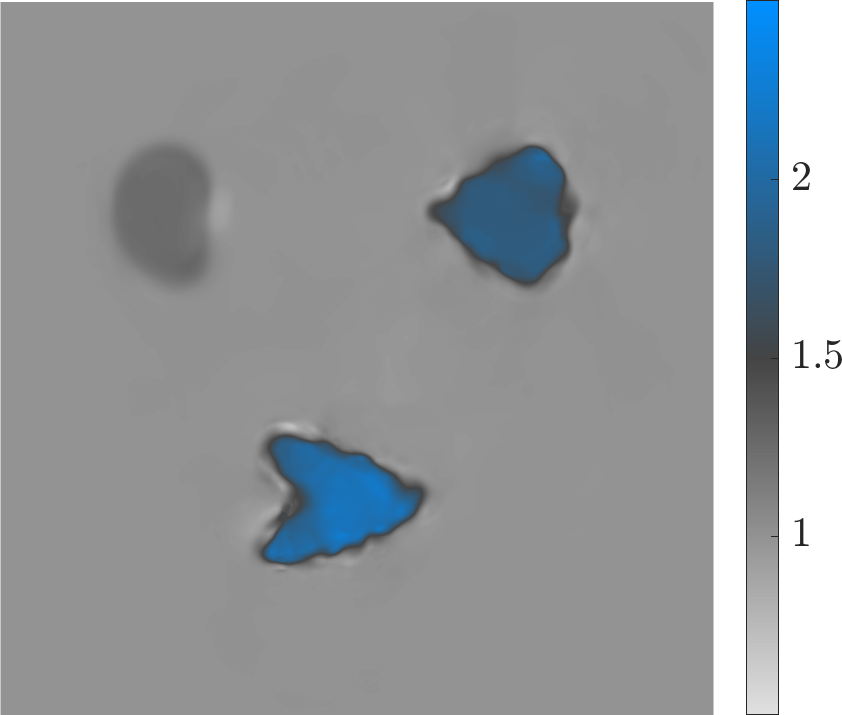}
		\caption{ASI: $u^{(m_\ast),\delta}$ for $\hat\delta = 5\%$}
	\end{subfigure}
	\\[2ex]
	\begin{subfigure}{0.32\textwidth}
		\centering
		\includegraphics[width=1\textwidth]{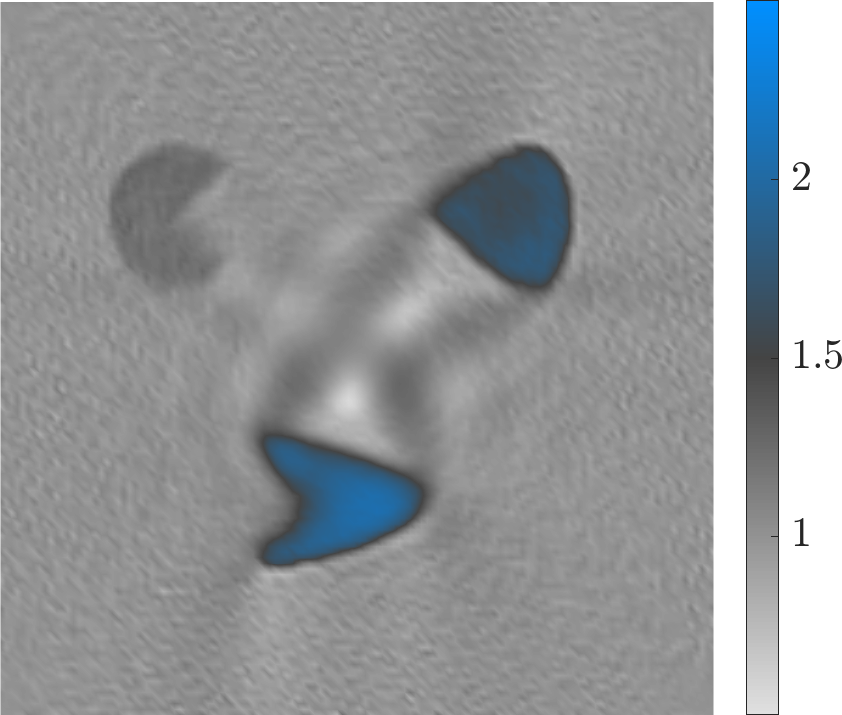}
		\caption{Tikhonov: $u^{(m_\ast),\delta}$ for $\hat\delta = 1\%$}
	\end{subfigure}
	\hfill
	\begin{subfigure}{0.32\textwidth}
		\centering
		\includegraphics[width=1\textwidth]{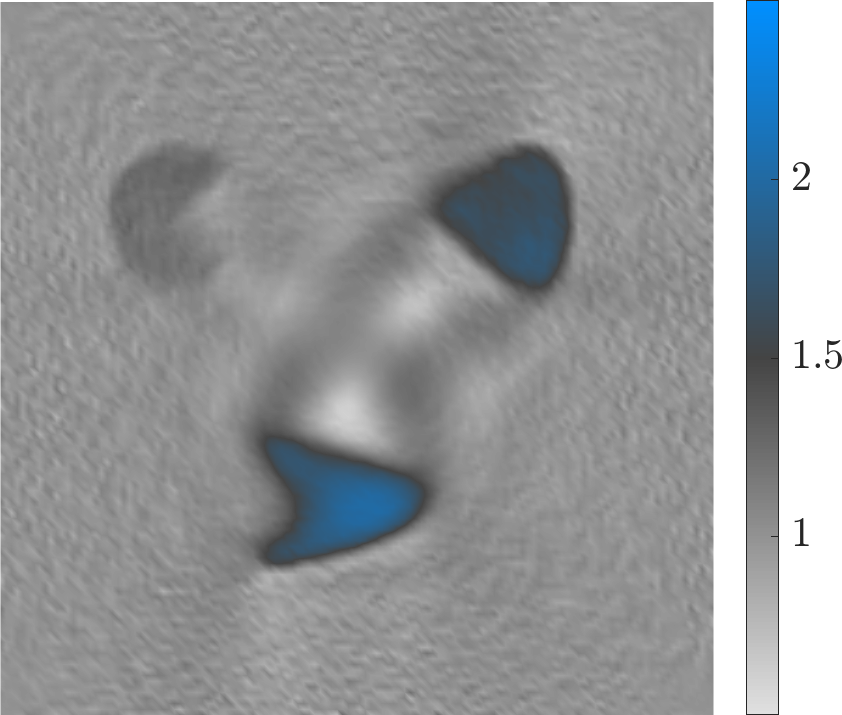}
		\caption{Tikhonov: $u^{(m_\ast),\delta}$ for $\hat\delta = 2\%$}
	\end{subfigure}
	\hfill
	\begin{subfigure}{0.32\textwidth}
		\centering
		\includegraphics[width=1\textwidth]{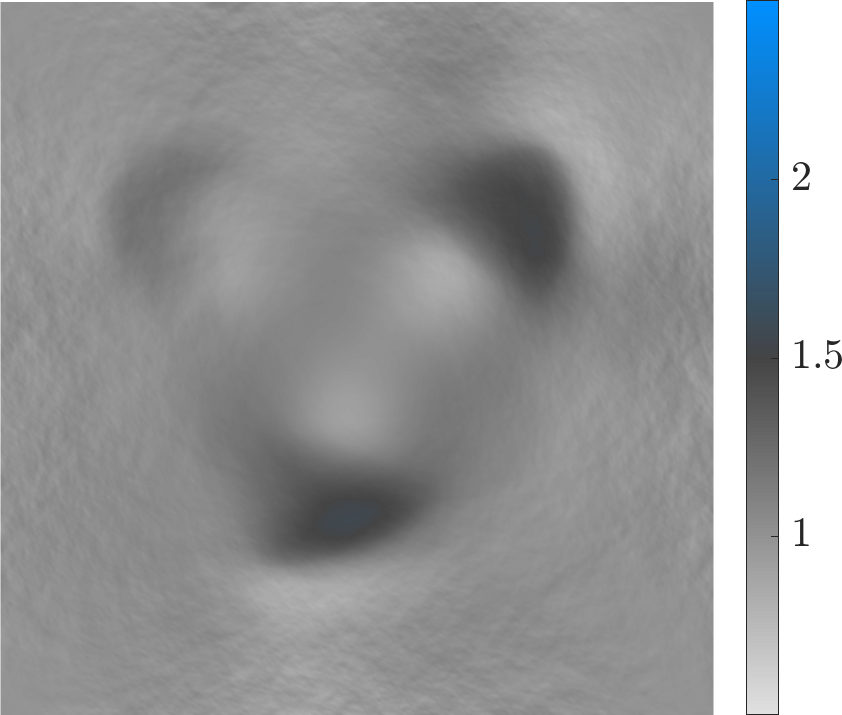}
		\caption{Tikhonov: $u^{(m_\ast),\delta}$ for $\hat\delta = 5\%$}
	\end{subfigure}
	\caption{Elliptic inverse problem, three inclusions:
				reconstructed medium using the ASI method (top) or standard $L^2$-Tikhonov regularization (bottom) for different noise levels $\hat\delta$.}
	\label{fig:elliptic.three.inclusions}
\end{figure}
\begin{figure}[ht!]
	\begin{subfigure}[c]{0.49\textwidth}
		\centering
		\includegraphics[width=0.8\textwidth]{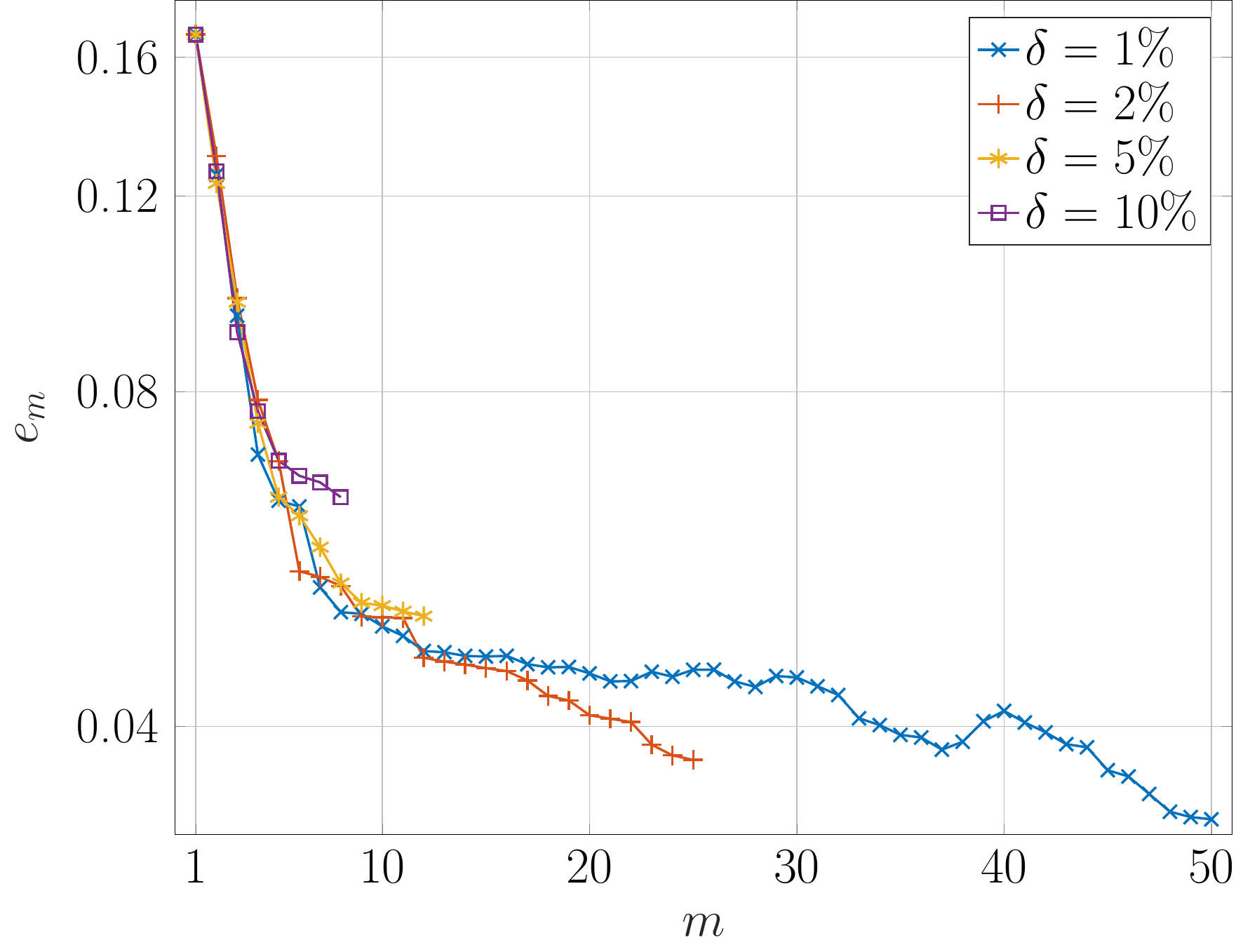}
		\caption{relative $L^2$ error $e_{m}$}
	\end{subfigure}
	\hfill
	\begin{subfigure}[c]{0.49\textwidth}
		\centering
		\includegraphics[width=0.8\textwidth]{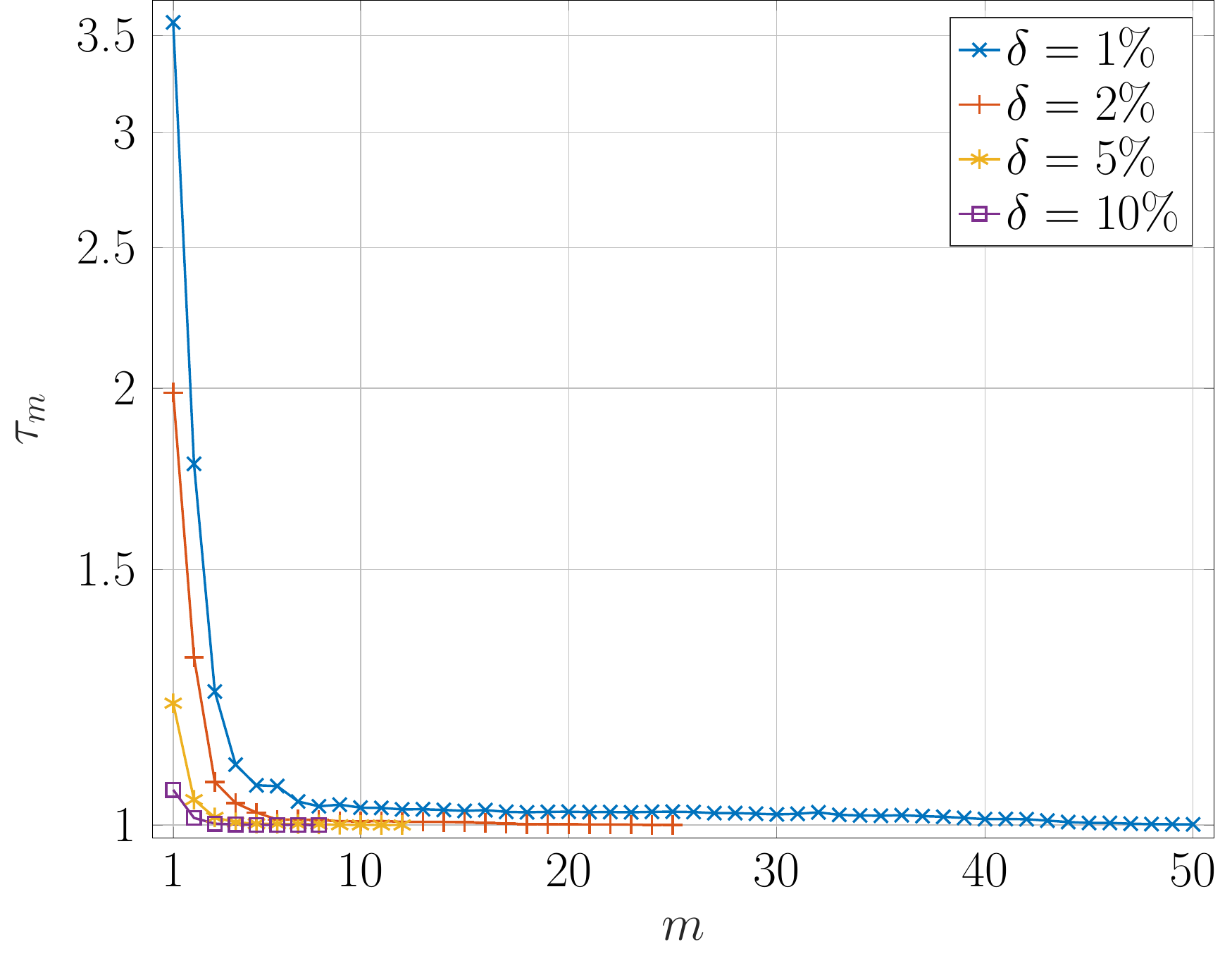}
		\caption{ratio $\tau_{m}$ from the discrepancy principle}
	\end{subfigure}
	\\[2ex]
	\begin{subfigure}[c]{0.49\textwidth}
		\centering
		\includegraphics[width=0.8\textwidth]{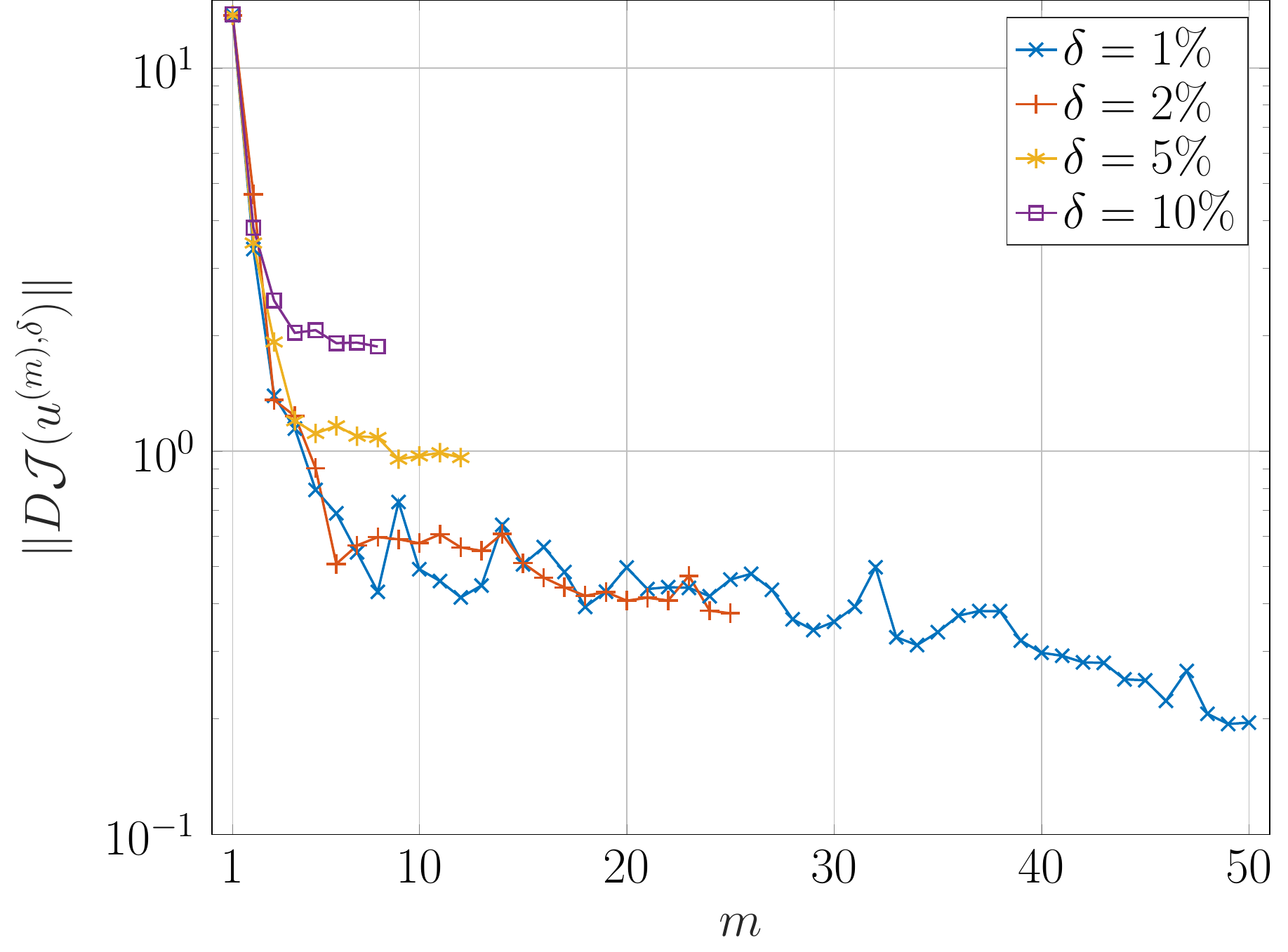}
		\caption{$\|D\misfit(u^{(m),\delta})\|$}
	\end{subfigure}
	\hfill
	\begin{subfigure}[c]{0.49\textwidth}
		\centering
		\includegraphics[width=0.8\textwidth]{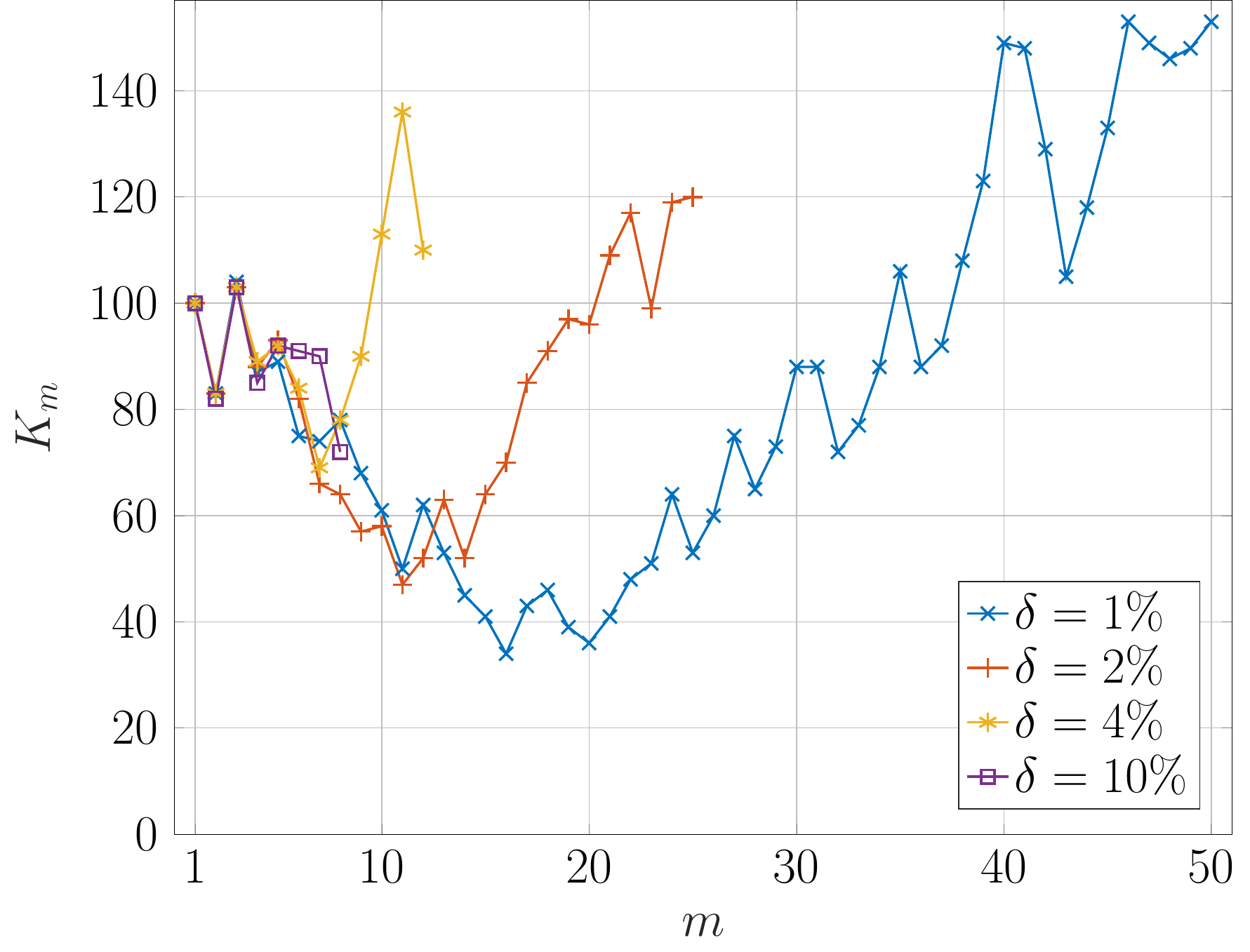}
		\caption{dimension $K_{m}$}
	\end{subfigure}
	\caption{Elliptic inverse problem, three inclusions:
				The relative error \eqref{eq:rel.error}, the ratio $\tau_{m}$ from the discrepancy principle \eqref{eq:tau.discrepancy.principle}, the norm of the gradient, and dimension of the search space, for every iteration $m$ for different noise levels $\hat\delta > 0$.}
	\label{fig:elliptic.three.inclusions.data}
\end{figure}
\subsection{Time Dependent Inverse Scattering Problem}\label{sec:IP.wave}
Next, we consider wave scattering from an unknown spatially distributed medium illuminated by surrounding point sources.
Hence, the forward problem \eqref{eq:num.constraint} now corresponds to the time-dependent wave equation in $\Omega = (0,1)^2$,
\begin{align}
	\label{eq:wave.equation}
	\begin{aligned}
		\frac{\partial^2}{\partial t^2} y_\ell(x,t) - \nabla \cdot \left( u(x) \nabla y_\ell(x,t) \right)
			&= f_\ell(x,t),  &&\quad x \in \Omega,\; t \in (0,T), \\
			 y_\ell(x,0) = \tfrac{\partial}{\partial t}y_\ell(x,0) &= 0, &&\quad x \in \Omega, \\
		\tfrac{\partial}{\partial t}y_\ell(x,t) + \sqrt{u(x)} \tfrac{\partial}{\partial n}y_\ell(x,t) &= 0, &&\quad x \in\partial\Omega,\; t\in (0,T),
	\end{aligned}
\end{align}
with homogeneous initial conditions and first-order absorbing boundary conditions.
Here, $u(x)$ denotes the squared wave speed whereas the sources $f_\ell(x,t) = g_\ell(x) r(t)$, $\ell = 1,\ldots,N_s$, correspond to smoothed Gaussian point sources
\begin{align}
	g_\ell(x) = \kappa e^{\frac{(x - x_\ell)^2}{s}},
	\qquad x \in \Omega,
\end{align}
in space, centered about distinct locations $x_\ell \in \Omega$, with $s = 10^{-2}$ and $\kappa = 200$, and a Ricker wavelet \cite{deBuhan2013aNewApproach,lines2005aTimeDomain} in time,
\begin{align}
	r(t) = (1 - 2 \pi^2 (\nu - t)^2) e^{- \pi^2 (\nu t - 1)^2},
	\qquad t \in [0,T],
\end{align}
with central frequency $\nu = 10$.
In Figure \ref{fig:wave.forward.problem}, snapshots of the solution to the forward problem \eqref{eq:wave.equation} are shown at different times for the source located at the top left corner.
\begin{figure}[ht!]
	\begin{subfigure}{0.32\textwidth}
		\centering
		\includegraphics[width=1\textwidth]{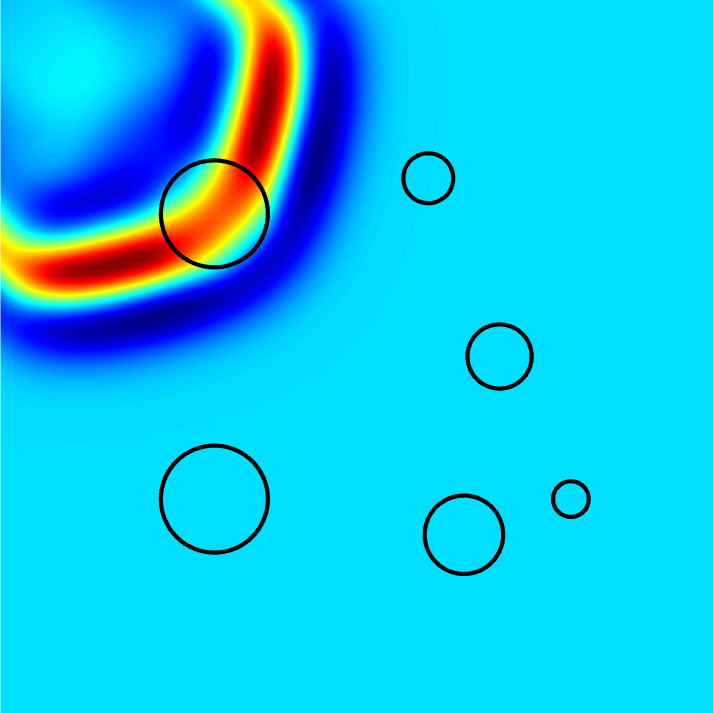}
		\caption{$t = 0.4$}
	\end{subfigure}
	\hfill
	\begin{subfigure}{0.32\textwidth}
		\centering
		\includegraphics[width=1\textwidth]{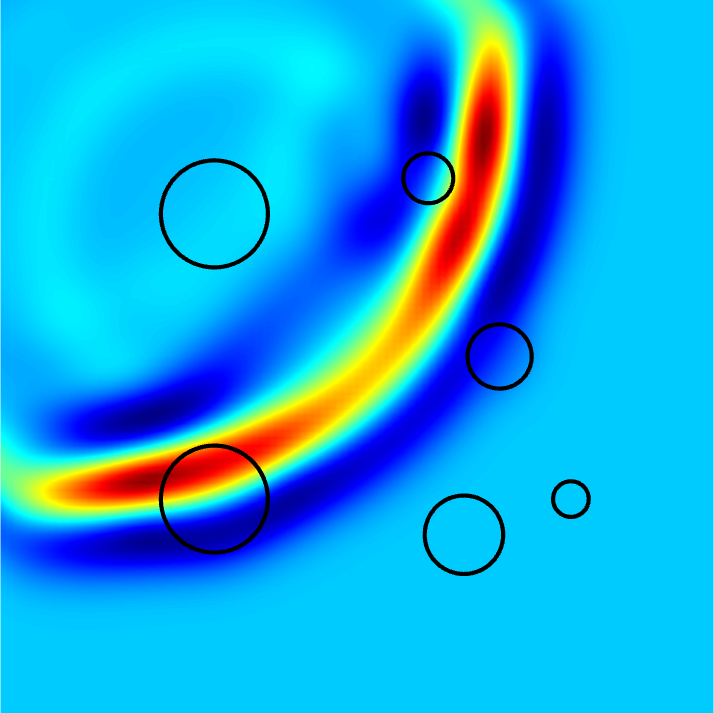}
		\caption{$t = 0.7$}
	\end{subfigure}
	\hfill
	\begin{subfigure}{0.32\textwidth}
		\centering
		\includegraphics[width=1\textwidth]{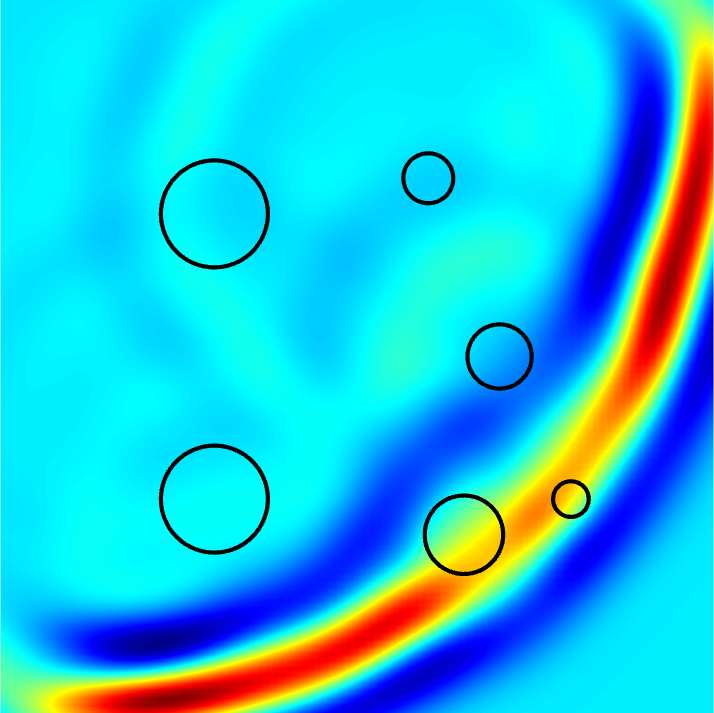}
		\caption{$t = 1.0$}
	\end{subfigure}
	\caption{Snapshots of the solution to the wave equation \eqref{eq:wave.equation}.}
	\label{fig:wave.forward.problem}
\end{figure}

The forward problem \eqref{eq:wave.equation} is solved by a standard Galerkin-FE method, where we again discretize $u(x)\in V_{h}$ using piecewise linear $\mathbb{P}^1$-FE on a triangular mesh with vertices located on a $400 \times 400$ equidistant Cartesian grid.
For $y(\cdot,t) \in \widetilde{V}_h$ in \eqref{eq:wave.equation}, however, we use quadratic $\mathbb{P}^2$-FE with mass-lumping \cite{cohen2001higher,mulder2001higher} on a separate triangular mesh with about $10$ elements per wavelength, resulting in approximately $60'000$ nodes.
For the time integration, we use the standard (fully explicit) leapfrog method with time-step $\Delta t \approx 4.5 \cdot 10^{-4}$.

To generate the (synthetic) observations, we now place $N_s = 32$ sources located at $x_\ell$ equidistributed near the boundary and illuminate the medium, one source at a time.
In contrast to the previous example from Section \ref{sec:IP.elliptic}, here the data $y^\delta_\ell$ is only available at the boundary nodes, yet for all discrete time-steps $t_n = n \Delta t$, $n = 0, 1, \ldots, N_T$, until the final time $T = 2$ when the incident wave has essentially left $\Omega$.
Hence, we set $H_1 = V_h(\Omega)$ and $H_2 = \widetilde{V}_h(\partial\Omega) \times \{t_n\}_{n = 0}^{N_T}$ in \eqref{eq:num.min.problem}, where the misfit
\begin{align}\label{eq:wave.misfit.all.sources}
	\misfit^\delta(u) =
	\frac{1}{2} \sum_{\ell = 1}^{N_s} \|y_\ell[u] - y_\ell^\delta\|_{L^2(\partial\Omega \times (0,T))}^2
\end{align}
now accounts for the data $y_\ell^\delta$ from multiple sources $\ell = 1, \ldots, N_s$.

To avoid any potential inverse crime, the exact data $y^\dagger_\ell = y_\ell[u^\dagger]$ was computed from a $20 \%$ finer mesh.
The perturbed (noisy) data $y_\ell^\delta\in H_2$ was then obtained at each boundary node $x_j$ and time-step
$t_n$ as
 \begin{align}
	y_\ell^\delta(x_j,t_n) = y_\ell^\dagger(x_j,t_n)\cdot
		(1 + \hat\delta\cdot \left(\eta_\ell)_{j,n}\right),
		\qquad (x_j,t_n) \in \partial\Omega \times (0,T),
\end{align}
for a \emph{noise level} $\hat\delta \geq 0$. Here, $(\eta_\ell)_{j,n}$ corresponds to normally distributed Gaussian noise with
\begin{align}
	\sum_{\ell = 1}^{N_s} \|y_\ell^\dagger - y_\ell^\delta\|_{L^2(\partial\Omega \times (0,T))} \leq \delta.
\end{align}

To reduce computational cost during the inverse iteration, we do not minimize \eqref{eq:wave.misfit.all.sources} directly, but instead use a standard \emph{sample average approximation} (SAA) \cite{haber2012anEffective}:
at each iteration, we combine all sources $f_\ell$ into a single ``super-shot'' $f^{(m)}$,
\begin{align}
	f^{(m)} = \sum_{\ell = 1}^{N_s} \xi_\ell^{(m)} f_\ell,
\end{align}
where $\xi_\ell^{(m)} = \pm 1$ follow a Rademacher distribution with zero mean.
Thus, at each iteration $m$, we solve the minimization problem 
\begin{align}
	\min_{u \in \Psi^{(m)}} \frac{1}{2} \| y[u] - y^{(m),\delta} \|_{L^2(\partial\Omega \times (0,T))},
\end{align}
with corresponding boundary observations
\begin{align}
	y^{(m),\delta} = \sum_{\ell = 1}^{N_s} \xi_\ell^{(m)} y_\ell^\delta.
\end{align}

First, we compare the ASI Algorithm from Section \ref{sec:asi} with the former $\text{ASI}_{0}$  Algorithm, 
where we omit Step \ref{algo:add.sensitivities} and thus ignore the most sensitive AS basis functions required for the angle condition, see Remark \ref{rem:asi0}.
To do so, we consider the (unknown) medium $u^\dagger$ consisting of six discs shown in Figure \ref{fig:InverseProblems.uDagger}.
In Table \ref{tab:wave.discs}, we observe that the ASI and $\text{ASI}_{0}$ Algorithms perform similarly in terms of the relative error $e_{m_\ast}$ and dimension of the search space $K_{m_\ast}$.
However, when we compare the reconstructed media $u^{(m_\ast),\delta}$ from Figure \ref{fig:wave.discs}, we observe that the $\text{ASI}_{0}$ Algorithm (bottom row) fails to reconstruct the smallest disc with increasing noise, whereas the ASI Algorithm (top row) always recovers even that smallest disc.

In Figure \ref{fig:wave.discs.data},  the relative error $e_{m}$ decreases
throughout all iterations, while $K_{m}$, and hence the number of control variables, remains small, which keeps the overall computational cost low.
As expected from Theorem \ref{thm:convergence.gradient}, the norm of the gradient $\|D\misfit^\delta(u^{(m),\delta})\|$ decreases and the ratio $\tau_{m}$ from the discrepancy principle \eqref{eq:tau.discrepancy.principle} tends to 1.
\begin{table}[ht!]
	\begin{center}
		\caption{Inverse scattering problem, six discs:
				the relative error $e_{m_\ast}$, the total number of iterations $m_\ast$, and the dimension $K_{m_\ast}$ of the search space are shown for the ASI and $\text{ASI}_{0}$ method.}
		\label{tab:wave.discs}
		\begin{tabular}{l||cccc|cccc}
			Method & \multicolumn{4}{c|}{ASI} & \multicolumn{4}{c}{$\text{ASI}_{0}$} \\\hline\hline
			$\hat\delta$ & $1\%$ & $2\%$ & $5\%$ & $10\%$ & $1\%$ & $2\%$ & $5\%$ & $10\%$ \\\hline
			$e_{m_\ast}$ & $2.6\%$ & $2.4\%$ & $2.0\%$ & $1.8\%$ & $2.4\%$ & $3.4\%$ & $4.4\%$ & $3.2\%$ \\
			$m_\ast$ & $50$ & $48$ & $50$ & $32$ & $50$ & $50$ & $31$ &  $49$ \\
			$K_{m_\ast}$ & $81$ & $133$ & $85$ & $69$ & $83$ & $28$ & $57$ & $83$ \\
			$\tau_{m_\ast}$ & $1.076$ & $1.006$ & $1.0003$ & $1.0001$ & $1.047$ & $1.26$ & $1.061$ & $1.017$ \\
			$\delta$ & $0.18\%$ & $0.35\%$ & $0.88\%$ & $1.76\%$ & $0.18\%$ & $0.35\%$ & $0.88\%$ & $1.76\%$
		\end{tabular}
	\end{center}	
\end{table}
\begin{figure}[ht!]
	\begin{subfigure}{0.32\textwidth}
		\centering
		\includegraphics[width=1\textwidth]{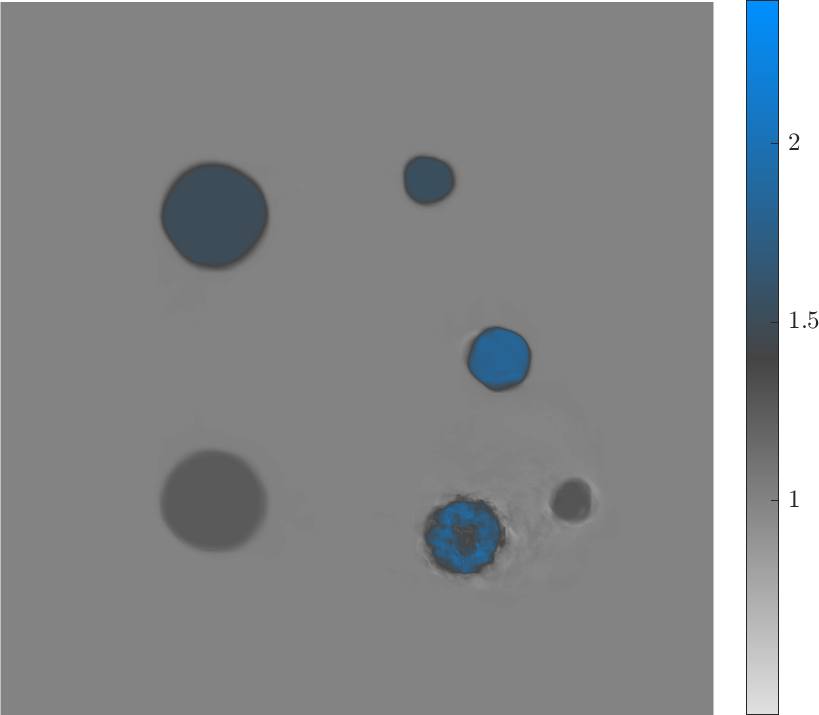}
		\caption{ASI: $u^{(m_\ast),\delta}$ for $\hat\delta = 1\%$}
	\end{subfigure}
	\hfill
	\begin{subfigure}{0.32\textwidth}
		\centering
		\includegraphics[width=1\textwidth]{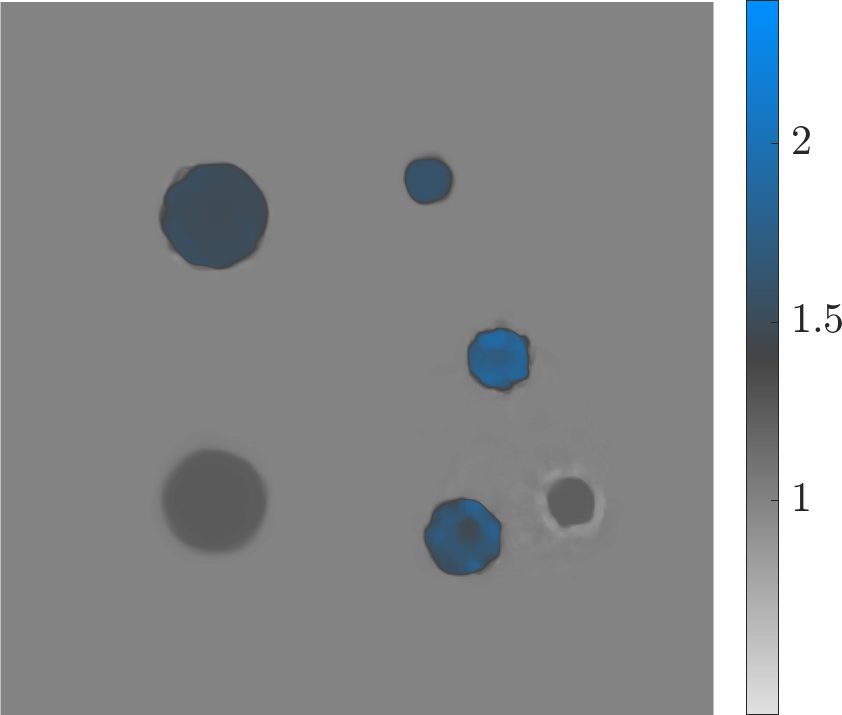}
		\caption{ASI: $u^{(m_\ast),\delta}$ for $\hat\delta = 2\%$}
	\end{subfigure}
	\hfill
	\begin{subfigure}{0.32\textwidth}
		\centering
		\includegraphics[width=1\textwidth]{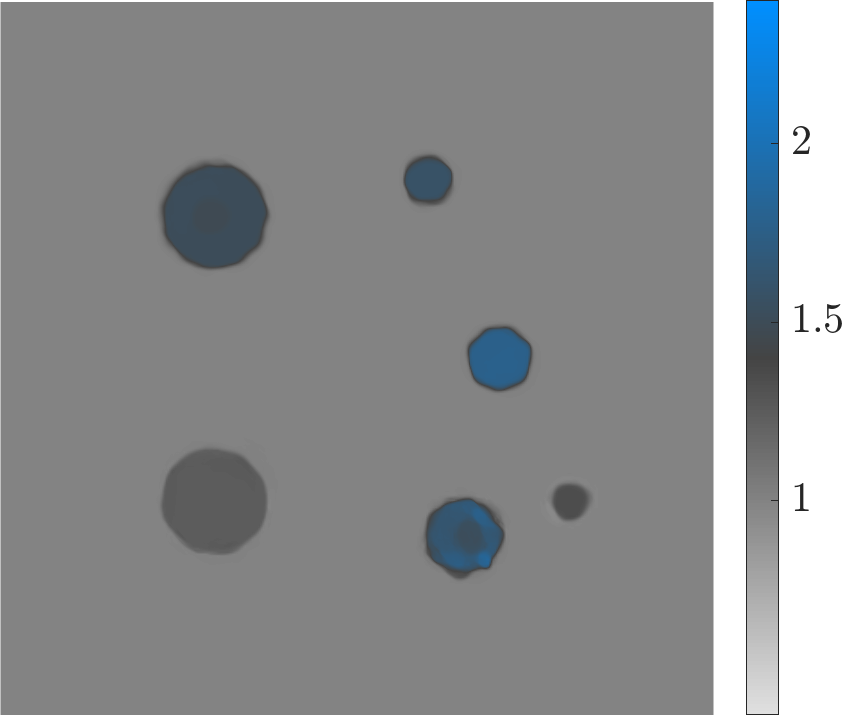}
		\caption{ASI: $u^{(m_\ast),\delta}$ for $\hat\delta = 5\%$}
	\end{subfigure}
	\\[2ex]
	\begin{subfigure}{0.32\textwidth}
		\centering
		\includegraphics[width=1\textwidth]{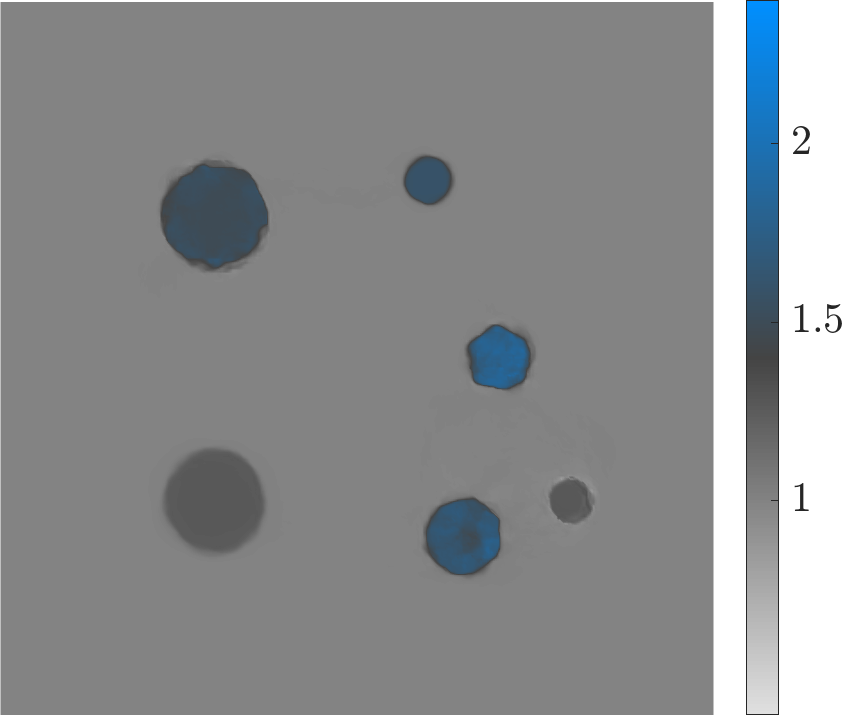}
		\caption{$\text{ASI}_{0}$: $u^{(m_\ast),\delta}$ for $\hat\delta = 1\%$}
	\end{subfigure}
	\hfill
	\begin{subfigure}{0.32\textwidth}
		\centering
		\includegraphics[width=1\textwidth]{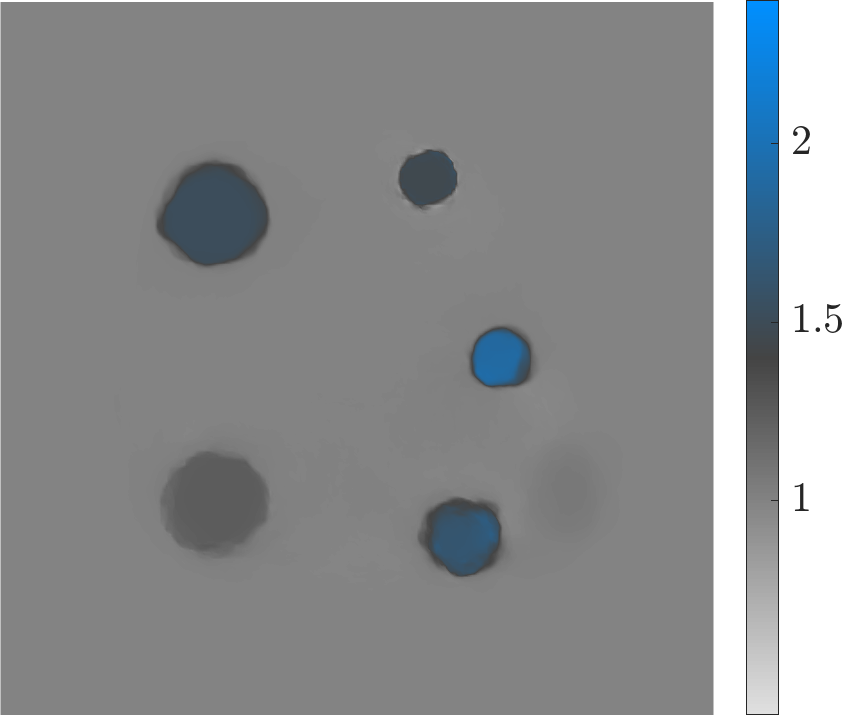}
		\caption{$\text{ASI}_{0}$: $u^{(m_\ast),\delta}$ for $\hat\delta = 2\%$}
	\end{subfigure}
	\hfill
	\begin{subfigure}{0.32\textwidth}
		\centering
		\includegraphics[width=1\textwidth]{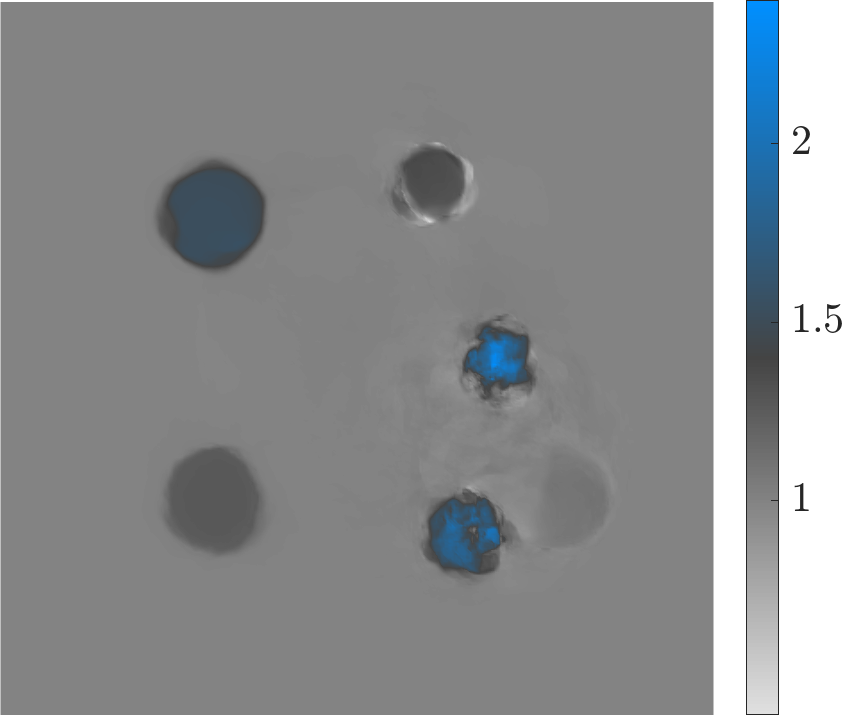}
		\caption{$\text{ASI}_{0}$: $u^{(m_\ast),\delta}$ for $\hat\delta = 5\%$}
	\end{subfigure}
	\caption{Inverse scattering problem, six discs:
				Comparison of ASI (top) and $\text{ASI}_{0}$ (bottom) for different noise levels $\hat\delta$.}
	\label{fig:wave.discs}
\end{figure}
\begin{figure}[ht!]
	\begin{subfigure}[c]{0.49\textwidth}
		\centering
		\includegraphics[width=0.8\textwidth]{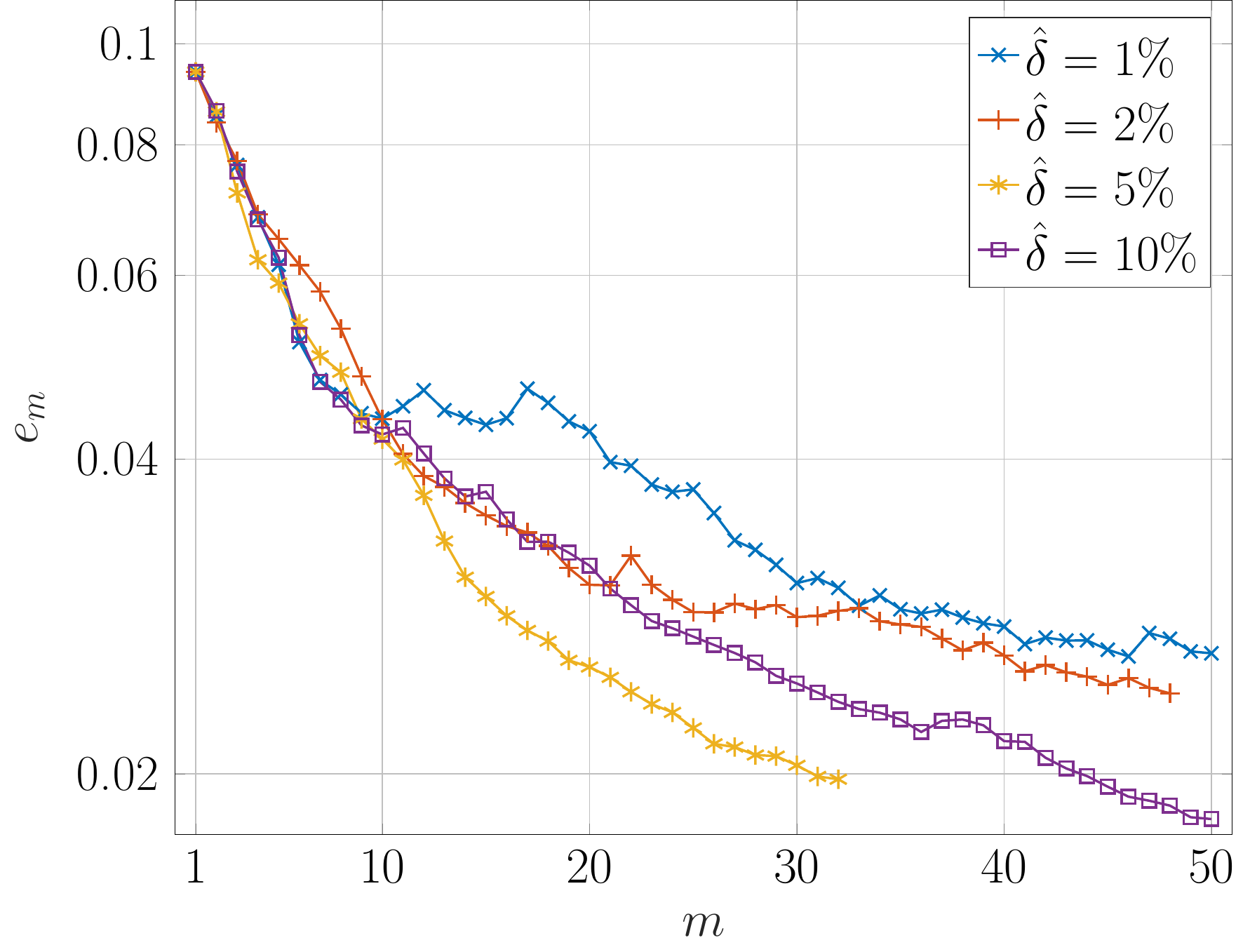}
		\caption{relative $L^2$ error $e_{m}$}
	\end{subfigure}
	\hfill
	\begin{subfigure}[c]{0.49\textwidth}
		\centering
		\includegraphics[width=0.8\textwidth]{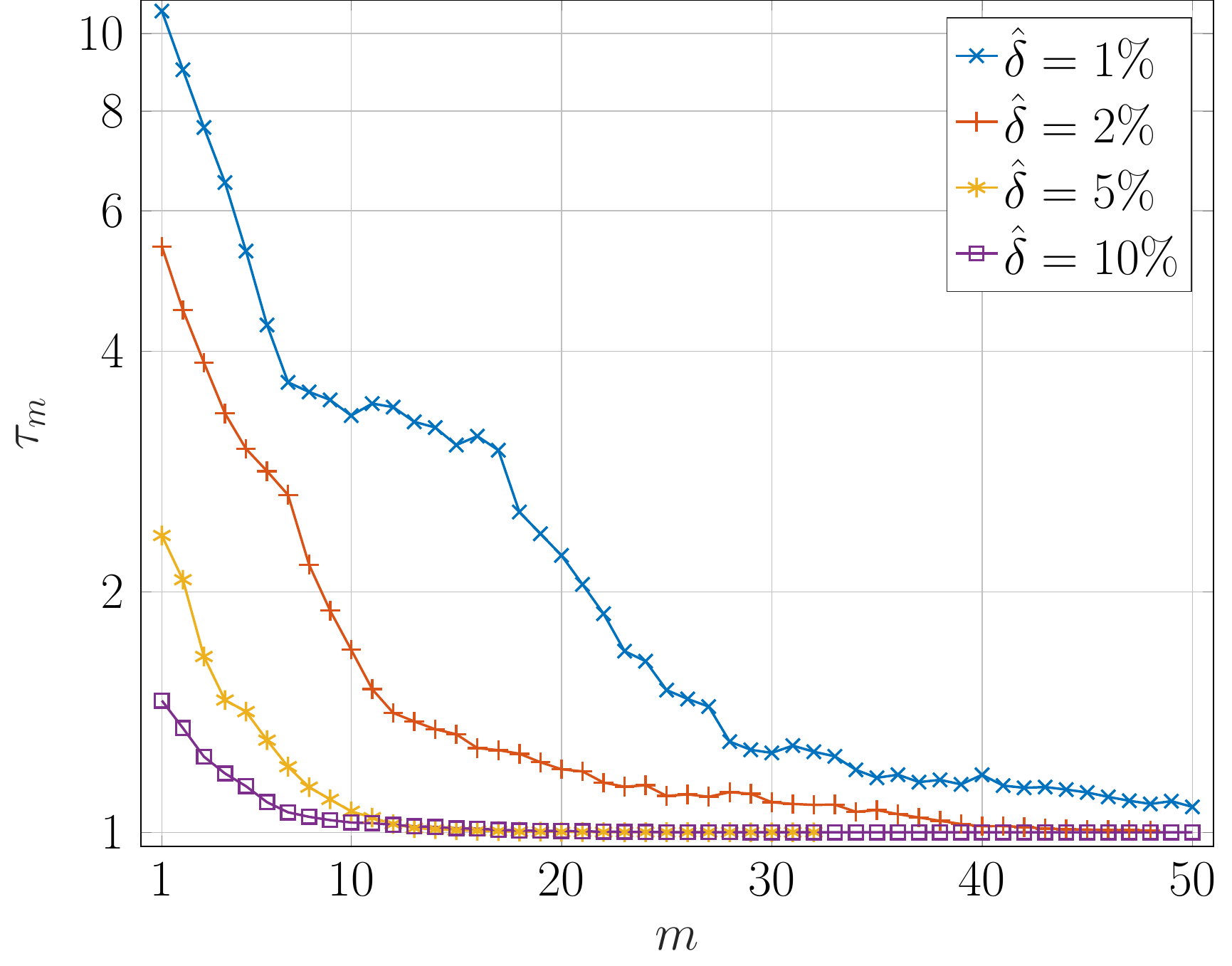}
		\caption{ratio $\tau_{m}$ from the discrepancy principle}
	\end{subfigure}
	\\[2ex]
	\begin{subfigure}[c]{0.49\textwidth}
		\centering
		\includegraphics[width=0.8\textwidth]{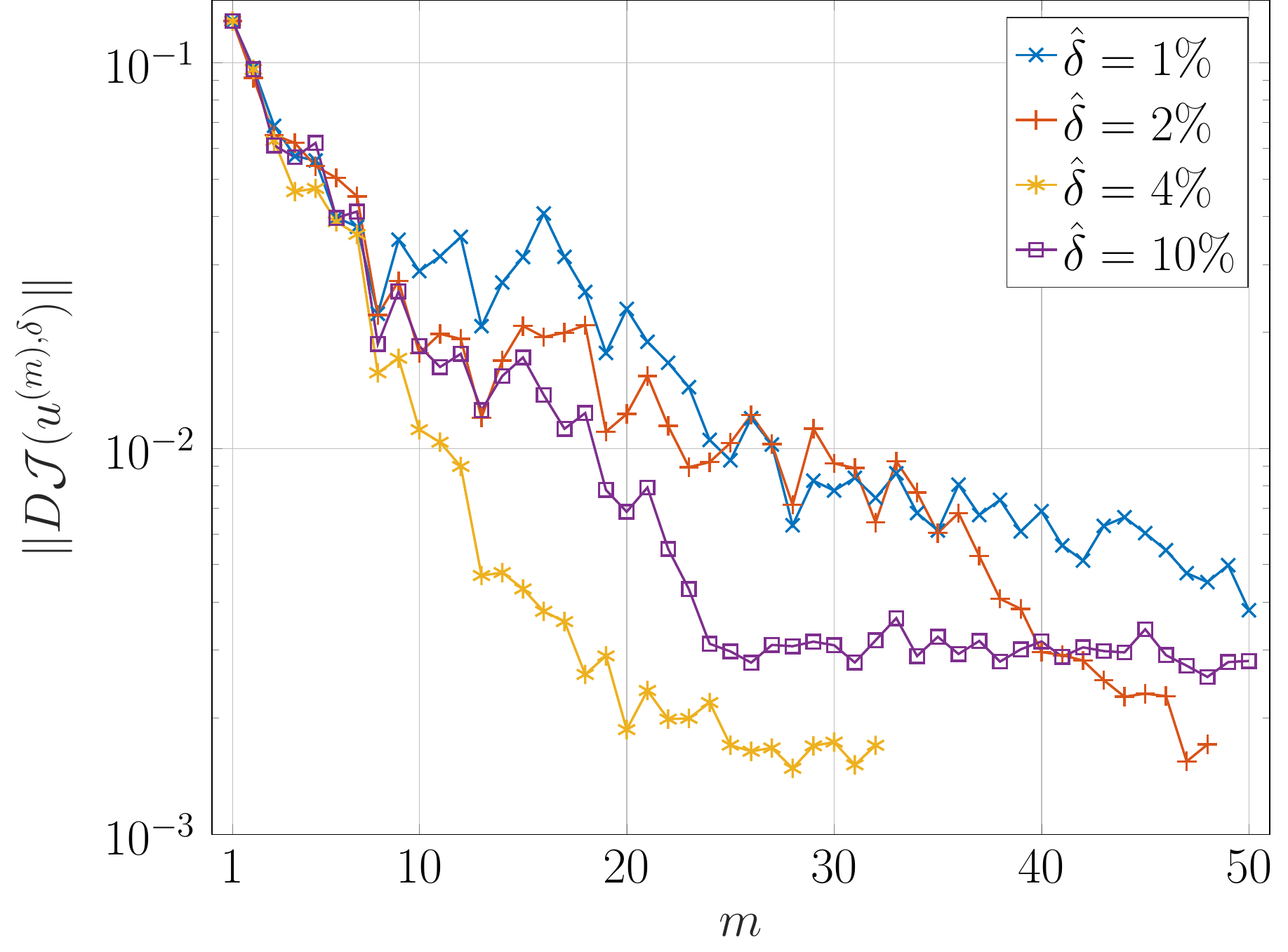}
		\caption{$\|D\misfit(u^{(m),\delta})\|$}
	\end{subfigure}
	\hfill
	\begin{subfigure}[c]{0.49\textwidth}
		\centering
		\includegraphics[width=0.8\textwidth]{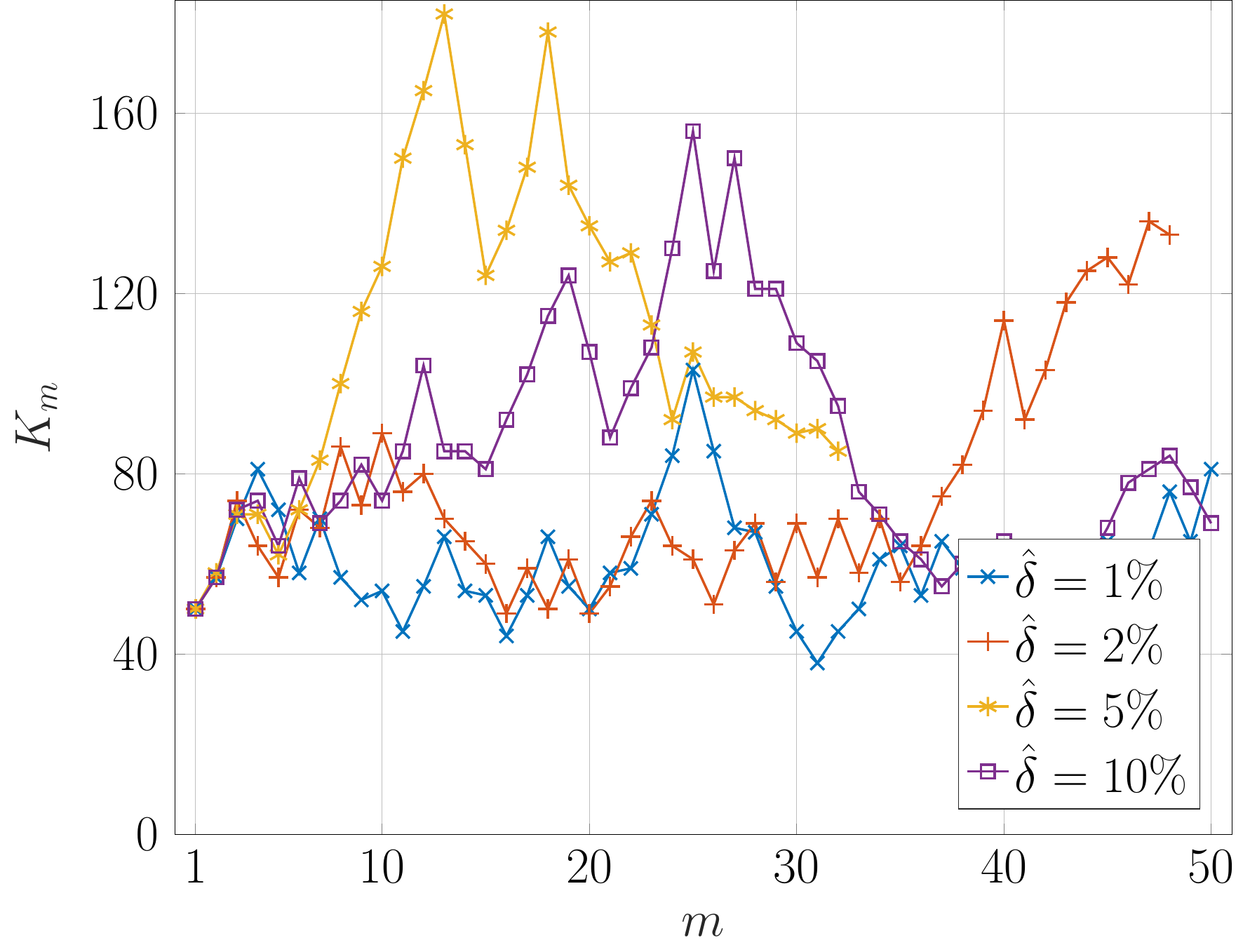}
		\caption{dimension $K_{m}$}
	\end{subfigure}
	\caption{Inverse scattering problem, six discs:
			The relative error \eqref{eq:rel.error}, the ratio $\tau_{m}$ from the discrepancy principle \eqref{eq:tau.discrepancy.principle}, the norm of the gradient, and the dimension of the search space are shown at every iteration $m$ for the ASI method and the different noise levels $\hat\delta$.}
	\label{fig:wave.discs.data}
\end{figure}

Finally, we consider the medium $u^\dagger$ shown in Figure \ref{fig:InverseProblems.uDagger} with three geometric inclusions. 
As shown in Figure \ref{fig:three.inclusions}, the ASI Algorithm recovers the shape and height of all three inclusions with high fidelity and regardless of the noise level $\hat\delta$.
In Table \ref{tab:three.inclusions} and Figure \ref{fig:three.inclusions.data}, we observe that the relative error $e_{m}$ remains low for all $\hat\delta$ and decreases throughout all iterations.
Again, the ratio $\tau_{m}$ from \eqref{eq:tau.discrepancy.principle} tends to $1$ while $\|D\misfit^\delta(u^{(m),\delta})\|$ decreases.
For all noise levels $\hat\delta$, the number of basis functions $K_{m}$ never exceeds $140$, which greatly reduces the computational effort compared to a standard nodal based optimization approach with $160'000$ control variables. Since the ASI and $\text{ASI}_{0}$ Algorithms performed similarly, the results from the latter are omitted here. 

\begin{table}[ht!]
	\begin{center}
		\caption{Inverse scattering problem, three inclusions:
				the relative error $e_{m_\ast}$, the total number of iterations $m_\ast$, and the dimension $K_{m_\ast}$ of the search space are shown for the ASI method.}
		\label{tab:three.inclusions}
		\begin{tabular}{l||cccc}
			Method & \multicolumn{4}{c}{ASI} \\\hline\hline
			$\hat\delta$ & $1\%$ & $2\%$ & $5\%$ & $10\%$ \\\hline
			$e_{m_\ast}$ & $4.0\%$ & $3.4\%$ & $3.2\%$ & $3.4\%$ \\
			$m_\ast$ & $50$ & $49$ & $50$ & $50$ \\
			$K_{m_\ast}$ & $114$ & $83$ & $61$ & $45$ \\
			$\tau_{m_\ast}$ & $1.075$ & $1.015$ & $1.005$ & $1.002$ \\
			$\delta$ & $0.18\%$ & $0.35\%$ & $0.88\%$ & $1.46\%$
		\end{tabular}
	\end{center}	
\end{table}
\begin{figure}[ht!]
	\begin{subfigure}{0.32\textwidth}
		\centering
		\includegraphics[width=1\textwidth]{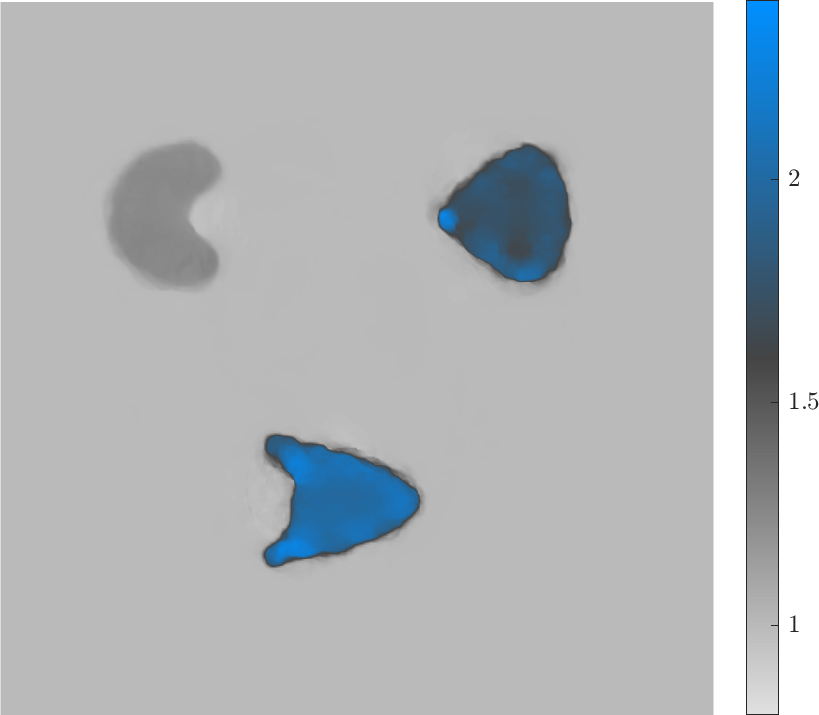}
		\caption{ASI: $u^{(m_\ast),\delta}$ for $\hat\delta = 1\%$}
	\end{subfigure}
	\hfill
	\begin{subfigure}{0.32\textwidth}
		\centering
		\includegraphics[width=1\textwidth]{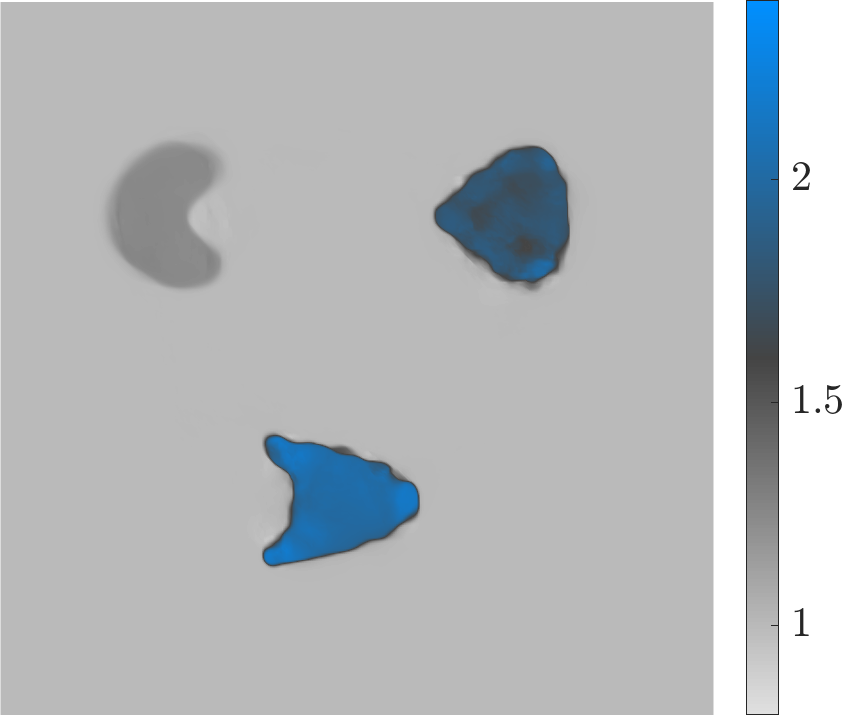}
		\caption{ASI: $u^{(m_\ast),\delta}$ for $\hat\delta = 2\%$}
	\end{subfigure}
	\hfill
	\begin{subfigure}{0.32\textwidth}
		\centering
		\includegraphics[width=1\textwidth]{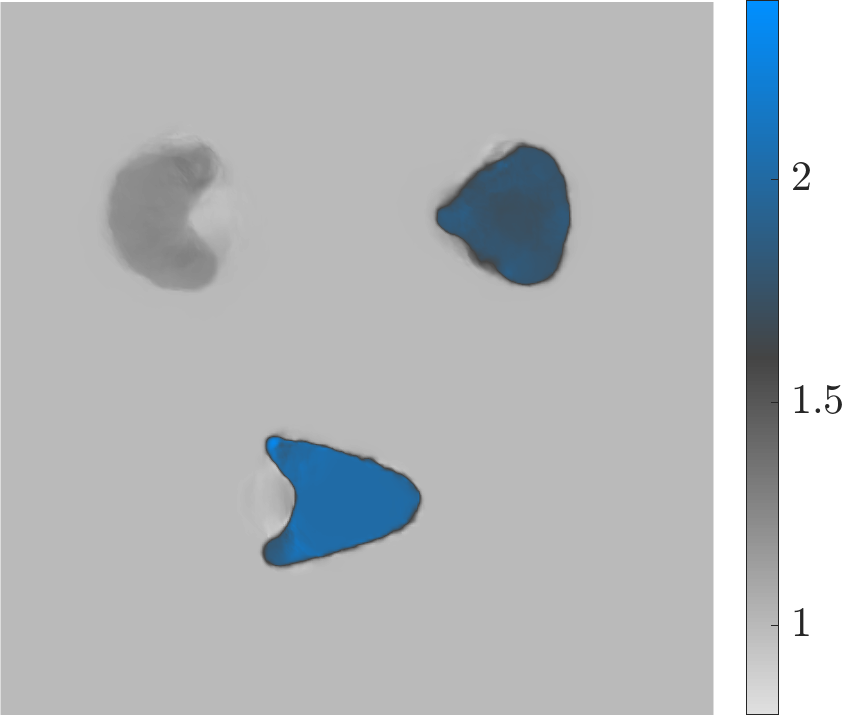}
		\caption{ASI: $u^{(m_\ast),\delta}$ for $\hat\delta = 5\%$}
	\end{subfigure}
	\caption{Inverse scattering problem, three inclusions:
				reconstructed medium using the ASI method for different noise levels $\hat\delta$.}
	\label{fig:three.inclusions}
\end{figure}
\begin{figure}[ht!]
	\begin{subfigure}[c]{0.49\textwidth}
		\centering
		\includegraphics[width=0.8\textwidth]{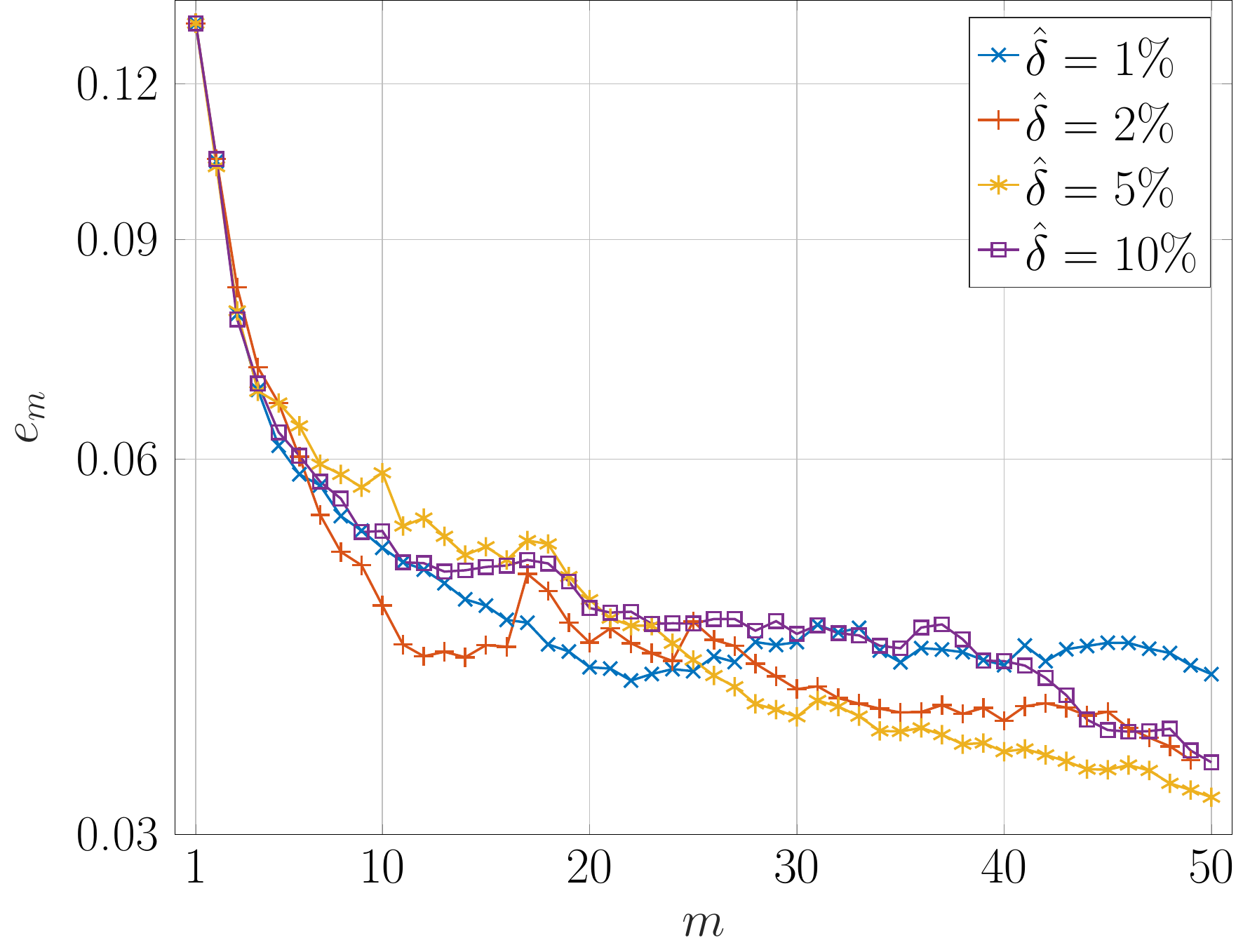}
		\caption{relative $L^2$ error $e_{m}$}
	\end{subfigure}
	\hfill
	\begin{subfigure}[c]{0.49\textwidth}
		\centering
		\includegraphics[width=0.8\textwidth]{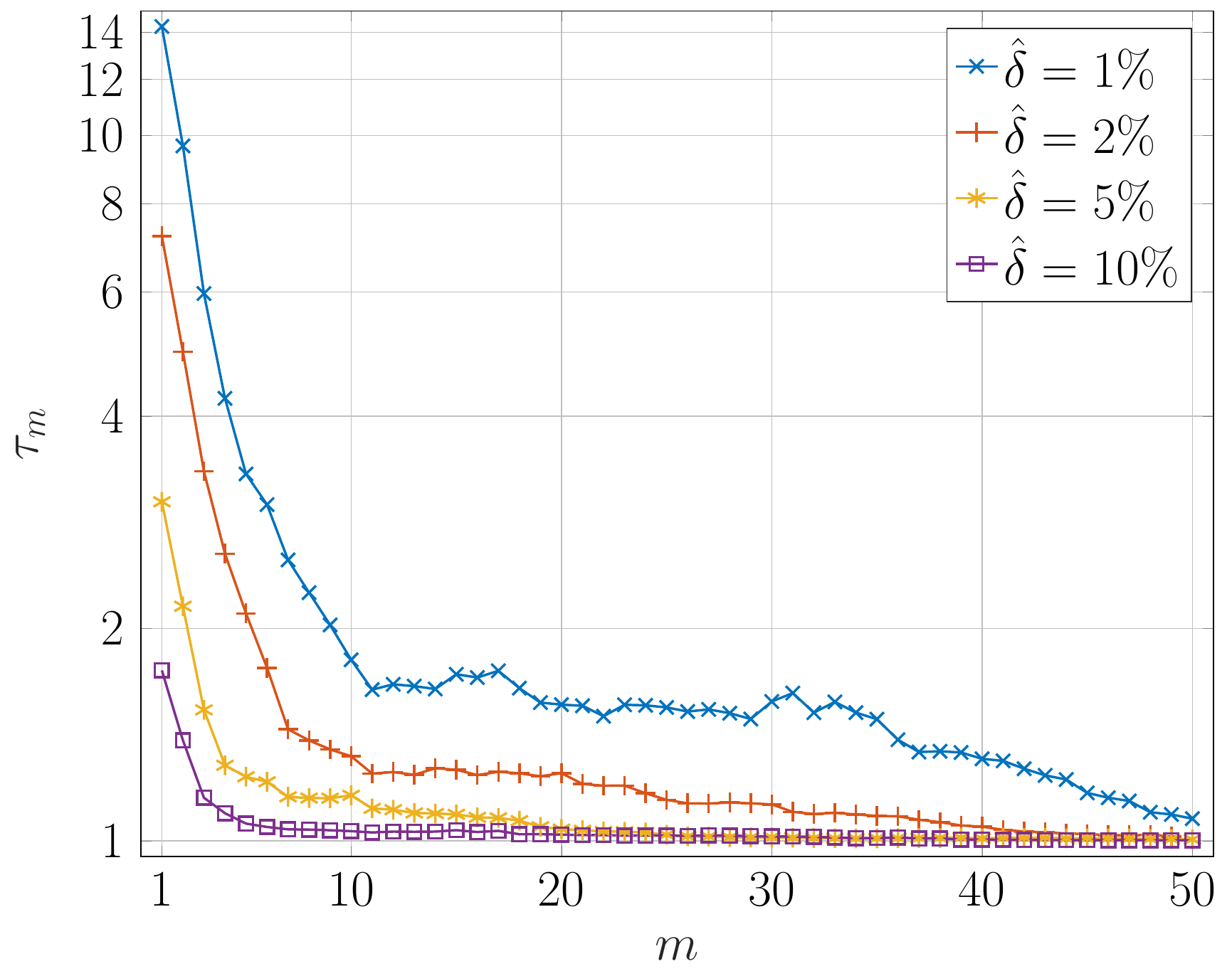}
		\caption{ratio $\tau_{m}$ from the discrepancy principle}
	\end{subfigure}
	\\[2ex]
	\begin{subfigure}[c]{0.49\textwidth}
		\centering
		\includegraphics[width=0.8\textwidth]{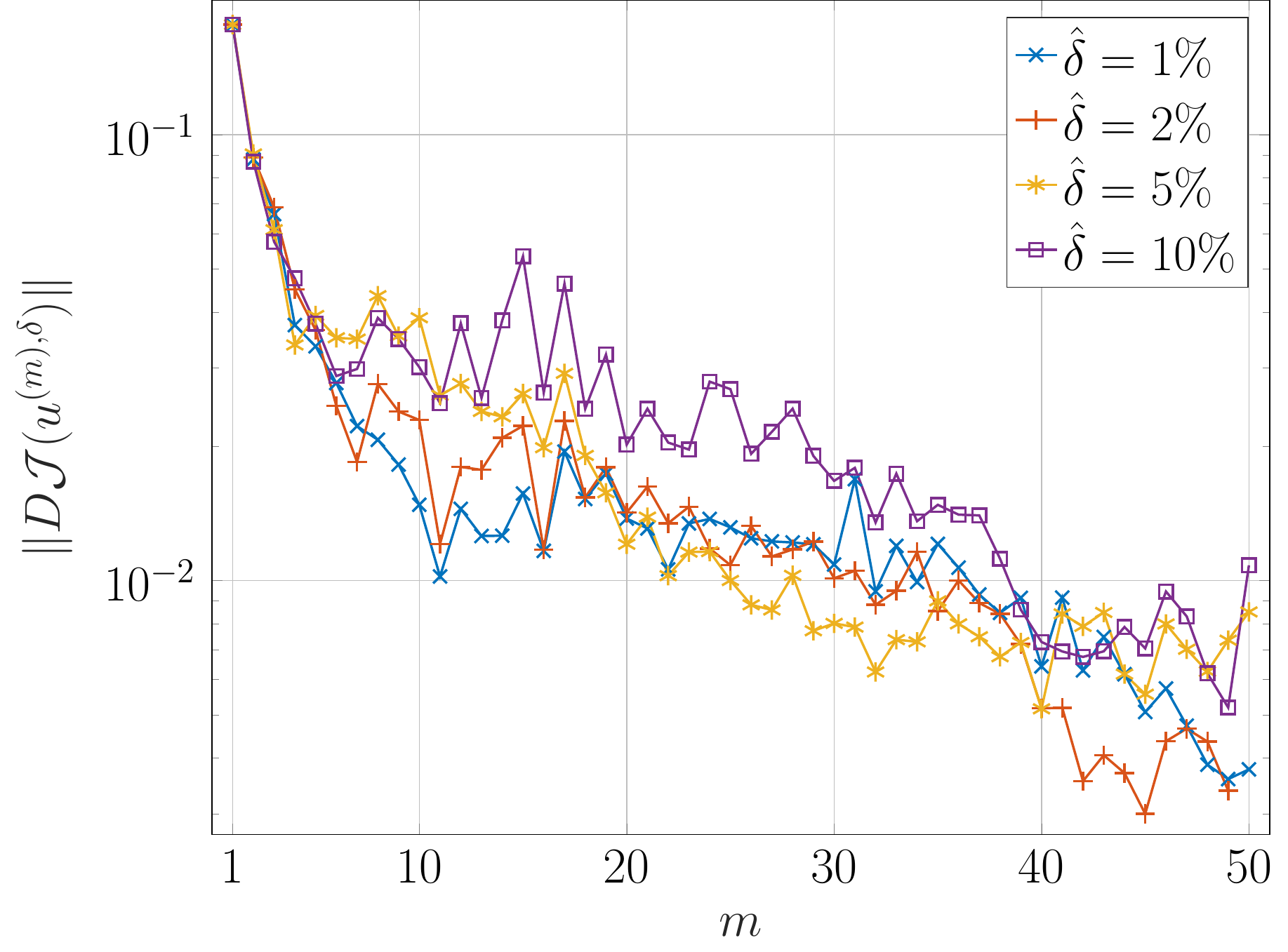}
		\caption{$\|D\misfit(u^{(m),\delta})\|$}
	\end{subfigure}
	\hfill
	\begin{subfigure}[c]{0.49\textwidth}
		\centering
		\includegraphics[width=0.8\textwidth]{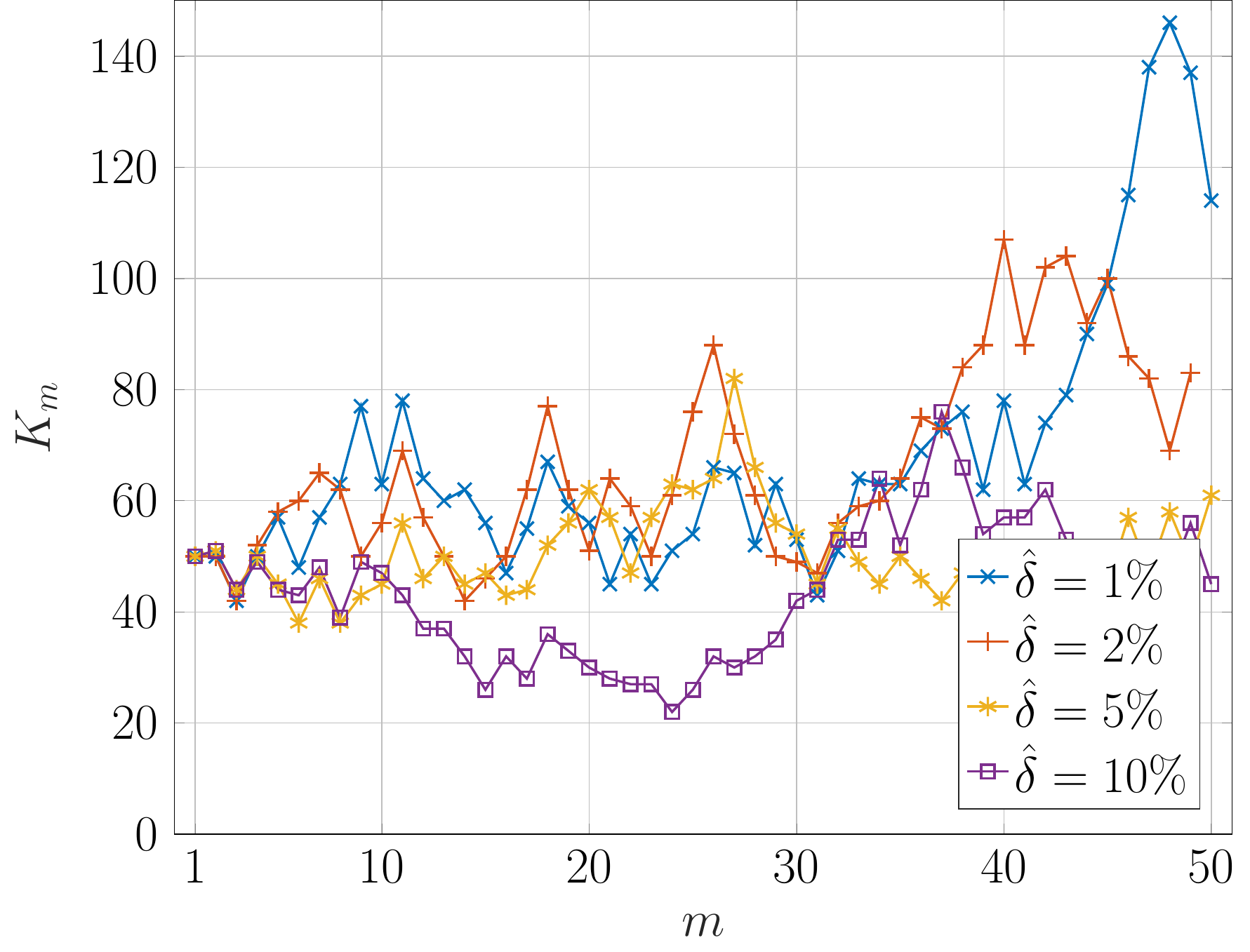}
		\caption{dimension $K_{m}$}
	\end{subfigure}
	\caption{Inverse scattering problem, three inclusions:
			The relative error \eqref{eq:rel.error}, the ratio $\tau_{m}$ from the discrepancy principle \eqref{eq:tau.discrepancy.principle}, the norm of the gradient, and the dimension of the search space, at iteration $m$ for the ASI method and different noise levels $\hat\delta = 1\%,\;2\%\;5\%,\;10\%$.}
	\label{fig:three.inclusions.data}
\end{figure}
\section{Concluding Remarks}
% summary
The Adaptive Spectral Inversion (ASI) method has proved remarkably effective for the solution of PDE-constrained inverse medium problems. At the $m$-th iteration, the ASI method
minimizes the data misfit with added noise $\delta$ in a small subspace $\Psi^{(m)}$, which inherits key information from the previous step while adding promising new search directions from the first few eigenfunctions
of the elliptic operator $L_\varepsilon[u^{(m),\delta}]$ in \eqref{eq:as.eigenfunctions}. Since the operator $L_\varepsilon[u^{(m),\delta}]$ itself depends on the current iterate, $u^{(m),\delta}$, so do its eigenfunctions and thus also the new search space $\Psi^{(m+1)}$. The convergence of the ASI method hinges upon the angle condition \eqref{eq:angle.condition} -- see Theorem \ref{thm:convergence.gradient} -- which has been newly included as a final step into each iteration. The full ASI Algorithm \ref{algo:ASI} is listed in Section \ref{sec:asi}.

Under suitable assumptions, the ASI iteration stops after (finitely many) $m_*$ iterations when the discrepancy principle \eqref{eq:discrepancy.principle} is satisfied. Hence, the ASI
Algorithm yields a genuine regularization method whose solution $u^{(m_*),\delta}$ converges to
the exact (noise-free) solution as $\delta \rightarrow 0$, without the need for extra Tikhonov regularization 
-- see Theorem \ref{thm:regularization}.
By adapting the search space at each iteration while keeping its dimension low, the ASI method achieves more accurate reconstructions than standard grid-based Tikhonov $L^2$-regularization together with a thousandfold decrease in the number of unknowns. Thanks to the newly incorporated angle condition \eqref{eq:angle.condition}, 
the ASI Algorithm is able to detect even the smallest inclusions in the medium, which previous versions  of the algorithm \cite{grote2019adaptive} at times failed to identify with increasing noise.

% cost EVP
The added cost from the numerical solution of the eigenvalue problem \eqref{eq:as.eigenfunctions}
is rather small. On the one hand, the Galerkin FE discretization leads to a generalized eigenvalue problem which is sparse, symmetric, and positive definite. On the other hand, we only require a few eigenfunctions which can be efficiently computed by a standard Lanczos iteration. A further reduction 
in the computational cost can easily be achieved by using an adaptively refined FE mesh when 
solving \eqref{eq:as.eigenfunctions}, as  in \cite{grote2017adaptive}, which need not coincide with the mesh used for discretizing the governing PDE. As the higher eigenfunctions become increasingly localized, they can easily be ``sparsified'' simply by setting to zero the smallest coefficients in their discrete FE representation \cite{grote2017adaptive}.

% future work
Although we have concentrated here on two-dimensional inverse medium problems, the AS decomposition in fact applies to arbitrary many space dimensions \cite{baffet2022error}. 
When the target medium is not piecewise constant but instead smoothly varying, adaptive spectral bases resulting
from different elliptic operators may be more effective \cite{grote2019adaptive}. Clearly, the ASI 
approach would probably prove useful for inverse source problems, for instance, or could be 
be combined with alternative (globally convergent) inversion methods \cite{beilina2012approximate}.
It could also be applied to other inverse problems, unrelated to wave scattering, where the state variable is governed by a different partial differential, or possibly even integral, equation.

\paragraph{Acknowledgments.}
We thank Daniel Baffet for useful comments and suggestions.
%
%
%
%-----------------------------
\bibliographystyle{siam}
\bibliography{literature.bib}
%%----------------------------

\end{document}